\newcommand*{\barfix}[2][.175ex]{%
  \mathpalette{\@barfix{#1}}{#2}%
}
\newcommand*{\@barfix}[3]{%
  \vbox{%
    \kern#1\relax
    \hbox{$#2#3\m@th$}%
  }%
}
\renewcommand{\exp}{\text{exp}}
\newcommand{\br}[1]{\llbracket{#1}\rrbracket}
\newtheorem{theorem}{Theorem}
\newtheorem{thm}{Theorem}[section]
\newtheorem{corollary}[thm]{Corollary}
\newtheorem{lemma}[thm]{Lemma}
\newtheorem{proposition}[thm]{Proposition}
\newtheorem{claim}[thm]{Claim}
\newcommand{\footremember}[2]{%
    \footnote{#2}
    \newcounter{#1}
    \setcounter{#1}{\value{footnote}}%
}
\newcommand{\footrecall}[1]{%
    \footnotemark[\value{#1}]%
}
\renewcommand{\Pr}{\mathbb{P}}
\begin{document}

\title{\vspace{-2em}Tree tilings in random regular graphs}
\author{%
Sahar Diskin \footremember{alley}{School of Mathematical Sciences, Tel Aviv University, Tel Aviv 6997801, Israel.}%
\and Ilay Hoshen \footrecall{alley}%
\and Maksim Zhukovskii \footremember{alley2}{School of Computer Science, The University of Sheffield, Sheffield S1 4DP, United Kingdom.}
}

\date{}
\maketitle
\begin{abstract}
We show that for every $\epsilon>0$ there exists a sufficiently large $d_0\in \mathbb{N}$ such that for every $d\ge d_0$, \textbf{whp} the random $d$-regular graph $G(n,d)$ contains a $T$-factor for every tree $T$ on at most $(1-\epsilon)d/\ln d$ vertices. This is best possible since, for large enough integer $d$, \textbf{whp} $G(n,d)$ does not contain a $\frac{(1+\epsilon)d}{\ln d}$-star-factor. Our method gives a randomised algorithm which \textbf{whp} finds said $T$-factor and whose expected running time is $O(n^{1+o(1)})$, as well as an efficient deterministic counterpart.
\end{abstract}

\section{Introduction}
Let $G$ be an $n$-vertex graph and $H$ be an $s$-vertex graph. An \textit{$H$-factor} in $G$ is a union of $\lfloor\frac{n}{s}\rfloor$ vertex-disjoint isomorphic copies of $H$ in $G$. 

There has been an extensive study into the threshold of appearance of $H$-factors in the binomial random graph $G(n,p)$. The case where $H = K_2$ corresponds to finding a perfect matching in $G(n, p)$. The sharp threshold for appearence of a perfect matching was established by Erd\H{o}s and R\'enyi \cite{ER66}. 
Early results for general $H$ were obtained by Alon and Yuster~\cite{alon1993threshold} and Ruci\'nski~\cite{R92}; they determined the threshold up to a constant factor for a specific family of graphs and gave bounds for the general case. For the case where $H$ is a tree, \L{}uczak and Ruci\'nski \cite{LR91} characterised `pendant' structures, and proved that in the random graph process (that is, when edges are added one after the other uniformly at random), the hitting time of the appearance of an $H$-factor is the same as the hitting time of the disappearance of these forbidden `pendant' structures. In particular, one is able to infer the precise threshold for the appearance of an $H$-factor in this case. In 2008, Johansson, Kahn and Vu \cite{johansson2008factors} determined the threshold (up to a multiplicative constant) for the existence of an $H$-factor for every strictly-1-balanced\footnote{A graph $H$ is strictly-1-balanced if $\frac{|E(H)|}{|V(H)| - 1} > \frac{|E(J)|}{|V(J)| - 1}$ for every proper subgraph $J \subsetneq H$ with $|V(J)| \ge 2$.} graph $H$ and determined the threshold up to a logarithmic factor for an arbitrary graph $H$. In the case of cliques $K_s$, Heckel (for $s=3$)~\cite{H21} and Riordan (for $s\ge 4$)~\cite{R22} determined the \textit{sharp} threshold for the appearance of an $H$-factor. Recently, a hitting time result for the appearance of a $K_s$-factor was proved~\cite{HKMP}, and the sharp threshold for the appearance of an $H$-factor for every strictly-1-balanced graph $H$ was determined~\cite{BHKMP}.

Much less is known in the case of \textit{random $d$-regular graphs}. The random $d$-regular graph $G(n,d)$ is a graph chosen uniformly at random among all simple $d$-regular graphs on the vertex set $\br{n}:=\{1,\ldots,n\}$ (throughout the paper, we treat $d$ as fixed and consider the asymptotics in $n$). Since, for every pair of integers $d \ge 2$ and $k \ge 3$, the number of cycles of length $k$ in $G(n, d)$ is asymptotically distributed as a Poisson random variable with mean $(d-1)^k/(2k)$ (see \cite{W81}), \textbf{whp}\footnote{With high probability, that is, with probability tending to one as $n$ tends to infinity.} there are $o(n)$ cycles of length $k$ in $G(n, d)$. Thus, we may (and will) restrict our attention to tree factors.

For the case of $H=K_2$, Bollob\'as~\cite{B81} proved that whp there exists an $H$-factor (that is, a perfect matching) in $G(n, d)$ for every $d \ge 3$.
There has been some research on the more general case of stars. Naturally, one cannot hope for a $K_{1,t}$-factor for $t>d$, since the graph is $d$-regular. For $d\ge 3$, using a first moment argument, one can show that \textbf{whp} $G(n,d)$ does not contain a $K_{1,d}$-factor (see \cite[Corollary 2]{assiyatun20063}). Robinson and Wormald \cite{robinson1994almost} showed that for $d\ge 3$, \textbf{whp} $G(n,d)$ contains a Hamilton cycle, and thus a $K_{1,2}$-factor. In a subsequent work, Assiyatun and Wormald \cite{assiyatun20063} showed that for $d\ge 4$, \textbf{whp} $G(n,d)$ contains a $K_{1,3}$-factor. One may then suspect that for any $d\ge 3$, typically $G(n,d)$ contains a $K_{1,d-1}$-factor. However, using first moment calculations, one can show that this is not the case for $d\ge 5$ (see Appendix~\ref{appendix}).

There are then two natural avenues to venture into: first, for sufficiently large $d$, to determine all $k$ such that \textbf{whp} $G(n,d)$ contains a factor of stars on $k$ vertices; second, more ambitiously, one could try to find all trees $T$ for which \textbf{whp} $G(n, d)$ contains a $T$-factor.

Considering a related but slightly different problem, Alon and Wormald \cite{alon2010high} showed that for any $d$-regular graph $G$, there exists an absolute constant $c'$, such that $G$ contains a star-factor, in which every star has at least $c'd/\log d$ vertices (not necessarily all stars are of the same size). We stress that here, and throughout the paper, all logarithms are with respect to the natural basis. They further noted that this is optimal up the choice of the constant $c'>0$. Indeed, the existence of a factor of stars on at least $k$ vertices implies the existence of a dominating set of size at most $\frac{n}{k-1}$, and for any $\epsilon>0$ and sufficiently large $d$, \textbf{whp} the smallest dominating set in $G(n,d)$ is of size at least $\frac{(1-\epsilon)n\log d}{d}$ (see \cite[page 3]{alon2010high}). Let us note here that if one only assumes that $G$ is $d$-regular, then one cannot hope to obtain a factor of stars of size exactly $k$ for any $3\leq k=O(d/\log d)$. Indeed, consider for example a $d$-regular graph $G$ formed by a collection of vertex disjoint copies of $K_{d+1}$ and vertex disjoint copies of complete bipartite graphs $K_{d,d}$. Then, since $\text{gcd}(d+1,2d)\in \{1,2\}$, for any choice of $k>2$, one cannot find a factor of stars of size $k$.

Our first main result shows that typically a random $d$-regular graph $G$ contains a star-factor with the asymptotically \textit{optimal} possible size. In fact, we extend this result to factors of \textit{any} tree (not necessarily a star).

\begin{theorem}\label{th: main}
For every constant $0< \epsilon<1$, there exists a sufficiently large integer $d_0$ such that the following holds for any $d\ge d_0$. \textbf{Whp}, for every tree $T$ on at most $\frac{(1-\epsilon) d}{\log d}$ vertices, there exists a $T$-factor in $G(n,d)$.
\end{theorem}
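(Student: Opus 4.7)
The plan is to combine the absorbing method with the pseudo-random structure of $G(n,d)$. Write $s := |V(T)|$. By Friedman's theorem, for $d \ge d_0(\epsilon)$ the random graph $G(n,d)$ is \textbf{whp} an $(n,d,\lambda)$-graph with $\lambda = (2+o(1))\sqrt{d-1}$, so the expander mixing lemma tightly controls all bipartite edge counts. I also plan to exploit the contiguity between $G(n,d)$ and a union of $d$ independent random perfect matchings (for $d$ even; adapted otherwise), in order to reveal the edges in independent rounds.

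The strategy has two parts. \emph{Absorbers.} I first reserve a random subset $R \subseteq \br{n}$ of size $|R| = n/\log^{C} d$ for a large constant $C = C(\epsilon)$. For each $s$-set $L \subseteq \br{n}\setminus R$ that might have to be absorbed, an $L$-absorber is a subgraph $A$ such that both $V(A)$ and $V(A)\cup L$ admit $T$-tilings. Following the Rödl--Ruci\'nski--Szemer\'edi template, I will build each such $A$ from a pair of copies of $T$ glued along an exchange structure; pseudo-randomness then yields polynomially many absorbers per $L$, after which a Pippenger--Spencer-style deletion argument extracts a family of vertex-disjoint absorbers in $R$ flexible enough to accommodate any possible leftover. \emph{Tiling.} On $V \setminus R$, I will run a random greedy / R\"odl nibble procedure: iteratively pick a random uncovered vertex $v$ and embed a random copy of $T$ rooted at $v$ along a BFS ordering of $T$. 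The expansion guarantees at least $d^{s-1}/\log^{O(s)} d$ available extensions per vertex throughout the process, so martingale concentration (e.g.~Azuma or Kim--Vu) shows that the uncovered set has size $o(|R|)$. Absorbing this leftover using the reservoir yields a $T$-factor of $G(n,d)$.

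The main obstacle is the tightness of the bound $s \le (1-\epsilon)d/\log d$: every estimate must be sharp up to a sub-polynomial slack, and the two stages must respect this margin simultaneously. The extremal case is $T = K_{1,s-1}$, where each star-center uses almost all of its $d$ neighbors as leaves, leaving essentially no room for local exchanges. I expect this case to require a bespoke absorber---for instance, chained cherry gadgets that permit a single leaf to be rotated around a long cycle of centers---while for non-star trees a leaf-by-leaf pendant-substitution absorber should suffice. Verifying that the absorber family in $R$ is simultaneously dense enough to handle any realized leftover and structured enough not to obstruct the random greedy tiling on $V\setminus R$ is the central technical challenge.
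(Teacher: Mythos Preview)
Your approach is fundamentally different from the paper's. The paper uses neither absorption nor a nibble: it partitions $V(G)$ into $k=|V(T)|$ equal parts $V_1,\ldots,V_k$, one per vertex of $T$, and for each $\{i,j\}\in E(T)$ finds a perfect matching between $V_i$ and $V_j$ via Hall's theorem. The partition is built through several rounds of the algorithmic Lov\'asz Local Lemma (Moser--Tardos), which enforce $d(v,V_j)\in[\delta d/k,\,Cd/k]$ for all $v\in V_i$ with $\{i,j\}\in E(T)$ while keeping each $V_i$ within $o_d(n/k)$ of a uniformly random part; closeness to a random partition is then what makes the Hall verification go through, using the Gao--Ohapkin edge-distribution bounds for random graphs with a prescribed degree sequence.

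Your plan has a genuine gap, already in the case $T=K_{1,s-1}$ with $s=(1-\epsilon)d/\log d$ that you yourself flag as extremal. Once the uncovered fraction is $\gamma$, a vertex can serve as a new star centre only if it still has $s-1$ uncovered neighbours; since the random greedy keeps the uncovered set pseudo-random, this fails for essentially every vertex as soon as $\gamma d<s-1$, i.e.\ around $\gamma\approx(1-\epsilon)/\log d$. Thus the leftover is of order $n/\log d$, not $o(|R|)=o(n/\log^{C}d)$; no martingale inequality helps here, because the number of rooted extensions is not merely poorly concentrated but identically zero past this point. Enlarging $R$ does not repair the scheme. If $|R|=o(n/\log d)$ then for a random $R$ \textbf{whp} no vertex of $G$ has $s-1$ neighbours inside $R$, so $G[R]$ contains no copy of $K_{1,s-1}$ and hence no absorber (which by definition must carry an internal $K_{1,s-1}$-tiling) can be placed in $R$ at all. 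If instead $|R|=\Theta(n/\log d)$, then $G[R]$ has average degree $\Theta(s)$ and tiling the reservoir with stars of size $s$ is at least as tight as the original problem, so nothing has been reduced. The underlying obstacle is that at $s=(1-\epsilon)d/\log d$ the only slack is the constant $\epsilon$, while a two-stage absorbing scheme requires a genuine scale separation between the almost-cover and the clean-up; the paper's template-plus-matchings approach never attempts to greedily pack copies of $T$ and so sidesteps this entirely.
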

We note that throughout the paper, we will assume that $|V(G)|$ is divisible by $|V(T)|$, to avoid unnecessary technical details, however all proofs can be directly extended to the general case. Further, we note that we may fix the tree $T$ and show that \textbf{whp} there is a $T$-factor in $G(n,d)$; since there are at most $d^2\cdot 4^{d}$ such trees (see~\cite{Otter}) and $d$ is fixed, by the union bound the statement then holds for every tree $T$.

A detailed sketch of the proof of Theorem \ref{th: main} is presented in Section \ref{s: outline}. Let us briefly recap the main strategy here. We show that \textbf{whp}, there exists a balanced partition of $|V(G)|$ into $|V(T)|$ parts so that, for every pair of parts $V_i,V_j$ where $\{i,j\}\in E(T)$, every vertex in $V_i\cup V_j$ has the number of neighbours in the other part concentrated around the mean $d/|V(T)|$. We say that such a partition is \textit{nice}. We obtain this nice partition through four different applications of the algorithmic version of the Lov\'asz Local Lemma, due to Moser and Tardos~\cite{MT10}. In particular, this allows us to find such a nice partition which is \textit{close} to a random partition; further, this gives us a randomised algorithm to find this partition whose average running time is $\tilde{O}(n)$ \textbf{whp} (see Theorem \ref{th: LLL} and Corollary \ref{cor: lll}). In fact, we show that, in \emph{any} $d$-regular graph, which does not have short cycles close to each other, a fraction of the possible partitions are close to nice partitions. Utilising a description of the distribution of edges in random graphs with specified degree sequences (\cite{gao2023subgraph}, see also \cite[Theorem 2.2]{McKaySurvey}), we conclude that \textbf{whp} almost all partitions of a random regular graph induce multipartite graphs with good expansion properties. This allows us to find a partition with such expansion properties which is also close to a nice partition, ensuring the existence of a perfect matching between pairs of sets. Using an algorithm as in~\cite{chen2022maximum}, we can find these perfect matchings in time $n^{1+o(1)}$. This gives us our second main result.
\begin{theorem}\label{cor: running time}
For every constant $0< \epsilon<1$, there exists a sufficiently large integer $d_0$ such that the following holds for any $d\ge d_0$. There is a randomised algorithm that \textbf{whp} finds a $T$-factor in $G(n,d)$ in expected time $n^{1+o(1)}$, for every tree $T$ on at most $\frac{(1-\epsilon) d}{\log d}$ vertices.
\end{theorem}
Since the events that we consider in our applications of the algorithmic version of the Lov\'asz Local Lemma are determined by $\text{poly}(d)$ random variables over domains of size at most $d$, \cite[Theorem 1.4]{MT10} shows that there exists a \textit{deterministic} algorithm that \textbf{whp} finds these $T$-factors in polynomial in $n$ time in $G(n,d)$. 

Let us make some additional remarks.
\begin{itemize}
    \item It is not hard to verify that our proof follows through for a uniformly chosen graph on $n$ vertices with a given degree sequence, whose degrees lie in the interval $[d,(1+\delta)d]$ for some small $\delta>0$. We believe slight modifications of our technique, specifically in Section~\ref{s: theorems proof}, should allow us to obtain the same result for such graphs whose degrees are between $d$ and $O(d)$.
     \item We stress that in order to show the existence of a perfect matching between relevant sets in the partition, we need our partition to be close to a random partition, and thus the application of the algorithmic version of the Lov\'asz Local Lemma is crucial, even if we do not aim to get Theorem~\ref{cor: running time}.
\end{itemize} 
A possible simplification, which allows using a non-constructive version of the Lov\'asz Local Lemma, is applying `sprinkling' due to the contiguity result from~\cite{Janson:contiguity} instead of applying the direct estimation of probabilities in $G(n,d)$. As soon as a nice partition is obtained, we add independently edges of $G(n,\epsilon' d)$, where $\epsilon'\ll\epsilon$. Although it simplifies the proof, it does not allow deriving Theorem~\ref{cor: running time} and the generalisation to non-regular random graphs with specified degree sequences. Moreover, this does not allow obtaining any probability bounds, in contrast to our approach. Indeed, our proof gives that the probability a random $d$-regular graph has a $T$-factor (for any tree $T$ with $|V(T)|\le (1-\epsilon)d/\log d$) is at least $1-n^{-\Theta_d(1)}$.
In fact, the latter probability bound is tight. Indeed, consider the vertices $\{1,...,10d\}\in \br{n}$, say. The probability they form a connected component without a $T$-factor in $G(n,d)$ is at least $n^{-100 d^2}$. We thus obtain the following corollary.
\begin{corollary}
For every constant $0< \epsilon<1$, there exists a sufficiently large integer $d_0$ such that the following holds for any $d\ge d_0$. For any tree $T$ on at most $\frac{(1-\epsilon)d}{\log d}$, the probability that $G(n,d)$ contains a $T$-factor is $1-n^{-\Theta_d(1)}$.
\end{corollary}

One key complication that arises when using any variant of the Lov\'asz Local Lemma to prove Theorem~\ref{th: main} is that it is impossible to directly apply it, as every `bad' event has too many dependencies. A similar issue was addressed independently in the paper by Dragani\'c and Krivelevich~\cite{DK} on connected dominating sets, where they proposed a (significantly different and shorter) proof strategy to show that a $d$-regular graph without short close cycles has a nice partition. Notably, their method requires $\Theta(n)$ applications of the Lov\'asz Local Lemma (and consequently $O(n^2)$ resamples in the algorithmic version), which precludes a linear time reduction to finding perfect matchings. In contrast, our approach applies the Lov\'asz Local Lemma only a constant number of times, enabling such a reduction.

Let us finish this section with several avenues for future research. While in this general setting Theorem~\ref{th: main} is asymptotically best possible, for the case where $T$ is a path on $k$ vertices one can achieve a better result. Indeed, since $G(n,d)$ is typically Hamiltonian \cite{robinson1994almost}, one can \textbf{whp} obtain a factor of paths of any size. In fact, this observation can be generalised to all trees of bounded degree (see below). It would be interesting to try characterising, for every value of $k=k(n)$, families of trees $T$ on $k$ vertices for which one can \textbf{whp} obtain a $T$-factor in $G(n,d)$. 

As mentioned above, our proof uses results \cite{McKaySurvey, gao2023subgraph} on the distribution of edges in graphs chosen uniformly at random given a degree sequence. It would be interesting to see whether this step can be amended to allow our result to hold for pseudo-random $(n,d,\lambda)$-graphs, with $\lambda\ll d$ (see \cite{KS06} for background and many results on pseudo-random graphs). While a slight adjustment of our methods (with, in fact, a much simpler proof) yields that $\left(n,d,O(\sqrt{d})\right)$-graphs contain a star-factor for any star of size at most $\frac{d}{10\log d}$, this could be far from a complete answer. In fact, we are inclined to believe that Theorem \ref{th: main} should hold for $(n,d,\lambda)$-graphs with $\lambda=o(d)$. Let us mention here that, answering a question of Krivelevich \cite{K23}, Pavez-Sign\'e \cite{p23} showed that for $\lambda=o(d)$, an $(n,d,\lambda)$-graph contains a copy of every $n$-vertex tree with bounded degree and $\Theta(n)$ leaves. Subsequent work by Hyde, Morrison, M\"uyesser, and Pavez-Sign\'e \cite{HMMPM23} showed that for $\lambda=o(d/\log^3n)$, an $(n,d,\lambda)$-graph contains a copy of every $n$-vertex tree with bounded degree --- and thus, in particular, contains a $T$-factor for any tree $T$ of bounded degree. 

Another possible direction would be to consider the typical existence of any spanning forests in $G(n,d)$ whose degree is bounded by $(1-\epsilon)d/\log d$. Here, it might be interesting to attempt this first in the model of the binomial random graph, $G(n,p)$, for $p$ above the connectivity threshold, that is, $p\ge \frac{(1+\epsilon)\log n}{n}$. Is it true that it contains any spanning forest $F$ whose degree is bounded by $O(np/\log(np))$ \textbf{whp}? This is naturally tightly related to the universality question, with perhaps one key example being the result of Koml\'os, S\'ark\"{o}zy, and Szemer\'edi~\cite{KSS}, showing that that for every positive $\alpha,\Delta$ and sufficiently large $n$, every graph with minimum degree at least $(1/2+\alpha)n$ contains every tree on $n$ vertices with maximum degree at most $\Delta$.

\subsection{Organisation}
In Section \ref{s: notation} we set out some notation which will be of use throughout the paper. We then discuss the proof's structure and strategy in Section \ref{s: outline}.  In Section \ref{s: prelim} we collect some lemmas which we will utilise in subsequent sections. Section \ref{s: proposition} is devoted to the proof of the key proposition (Proposition \ref{prop: main path}), and is perhaps the most involved and novel part of the paper. Finally, in Section \ref{s: theorems proof} we prove two typical properties of $G(n, d)$ and show how to deduce Theorem~\ref{th: main} from these properties and Proposition \ref{prop: main path}.

\subsection{Notation}\label{s: notation}
Given a graph $H$, a vertex $v\in V(H)$, and a set $A\subseteq V(H)$, we denote by $d_H(v)$ the degree of $v$ and by $d_H(v,A)$ the number of neighbours of $v$ in $A$ (in $H$). When the graph in question is clear we may omit the subscript. We write $d(A)=\sum_{v\in A}d(v)$. Given $A,B\subseteq V(H)$, we denote by $e(A,B)$ the number of edges with one endpoint in $A$ and the other endpoint in $B$. When $A=B$, $e(A)=e(A,A)$ is the number of edges induced by $A$. We denote by $N(A,B)$ the neighbourhood of $A$ in $B$, that is, the set of vertices in $B$ which are adjacent to some vertex in $A$. All logarithms are with the natural base. Moreover, for every positive integer $n$, define $\br{n} \coloneqq \{1, 2, \dots, n\}$. We use the fairly standard notation that given sequences $a=(a_n)$ and  $b=(b_n)\geq 0$, $a=o(b)$ if, for every $\varepsilon>0$ there exists $n_0$ such that $|a_n|\leq\varepsilon b_n$ for all $n\ge n_0$. Given sequences $a'=(a'_d=a'_d(n))$ and $b'=(b'_d=b'_d(n))\geq 0$, we sometimes also use $a=o_d(b)$ to say that, for every $\varepsilon>0$ there exists $d_0,n_0$ such that $|a'_d|\leq\varepsilon b'_d$ for all $d\ge d_0$ and $n\ge n_0$. We systematically ignore rounding signs when it does not affect computations.

\section{Proof outline}\label{s: outline}
Unsurprisingly, finding a tree factor is much harder when the size of the tree is close to the optimal size (that is, $d/\log d$). In this section, we will present the proof outline for Theorem \ref{th: main} in the case when $k\geq\frac{\log d}{10d}$. We will further point out the steps where the proof becomes simpler for trees of smaller size.

Let $T$ be a tree on $k$ vertices and let us label these vertices by $V(T) \coloneqq \br{k}$. The overall strategy for finding a $T$-factor in $G \sim G(n, d)$ is quite intuitive. We will find $k$ disjoint sets $V_1, \dots, V_k \subseteq V(G)$ of equal size and show that \textbf{whp} there exists a perfect matching between every $V_i$ and $V_j$ such that $\{i,j\} \in E(T)$. To do so, our proof proceeds in two main steps. In the first step, we find `good' sets $V_1, \dots, V_k$ (in fact, we show such a partition typically exists in any $d$-regular graph $G$ without two short cycles close to each other). In the second step, we show the typical existence of a perfect matching between every relevant pair of these sets. The properties achieved in the first step facilitate the execution of the second step.

The first step of the proof, presented in Section \ref{s: proposition}, is perhaps the most involved and novel part. In this step, we establish key properties of the sets $V_1,\ldots, V_k$ which will be crucial in verifying the typical existence of perfect matchings in the second step. First, we show that for every $\{i,j\}\in E(T)$, the degree of every $v \in V_i$ into $V_j$ is around $d/k$. Notice that, this property alone does not suffice to establish the existence of a perfect matching between $V_i$ and $V_j$. To that end, we will also make sure that the sets $V_1, \dots, V_k$ are `close' to uniformly chosen sets. 

In the second step, presented in Section \ref{s: theorems proof}, we show that \textbf{whp}, for every $\{i,j\}\in E(T)$, there exists a perfect matching between $V_i$ and $V_j$ by showing that Hall's condition is satisfied. That is, we will show that \textbf{whp}, for every $W \subseteq V_i$, we have $|N(W, V_j)| \ge |W|$. To that end, we utilise a useful bound on the distribution of edges in graphs chosen uniformly at random given a degree sequence (see Theorem~\ref{th:gao} in Section \ref{s: prelim}, see also \cite{McKaySurvey}). First, we will show that \textbf{whp} for every two `small' sets $U,W\subseteq V(G)$ with $|U|=|W|$, there are not too many edges going from $U$ to $W$. Moreover, since the sets $V_1,\ldots, V_k$ were constructed in the first step in a way such that the degree of every vertex $v \in V_i$ to appropriate $V_j$'s is not too small, we obtain a lower bound on $e(W, V_j)$ for every $W \subseteq V_i$. In particular, if $|N(W, V_j)| < |W|$, we will get a contradiction for small sets $W\subseteq V_i$. In the same spirit, Theorem~\ref{th:gao} together with the properties of the sets $V_1,\ldots, V_k$ allows us to bound $|N(W, V_j)|$ for every `large' $W \subseteq V_i$ \textit{if} the sets $V_1, \dots, V_k$ were chosen uniformly at random. Indeed, given randomly chosen disjoint sets $A$ and $B$ (that is, sets formed without first exposing $G(n,d)$), the graph $G[A\cup B]$, given its degree sequence, has a uniform distribution. Luckily, the first step ensures that the sets $V_1.\ldots, V_k$ behave similarly to uniformly chosen sets.

Let us return to the first step of the proof and describe the broad strategy of showing the typical existence of the `good' sets $V_1, \dots, V_k$. We begin with a random partition of the vertices into $k$ parts, $S_1,\ldots, S_k$. A key tool in establishing the existence of such sets $V_1, \ldots, V_k$ is the Lov\'asz Local Lemma. Since we want our sets to be close to the initial random sets $S_1,\ldots, S_k$ (so that we may later be able to apply Theorem \ref{th:gao}), we will in fact utilise the algorithmic version of the Lov\'asz Local Lemma, due to Moser and Tardos (see Theorem \ref{th: LLL} in Section \ref{s: prelim}). Utilising the algorithmic version of the Lov\'asz Local Lemma, we can show that in each step of the algorithm we resample a small number of random variables assigned to vertices, and thus the initial random sets $S_1, \dots, S_k$ will not be `far' from $V_1,\ldots, V_k$. Let us note here that when applying the algorithmic version of the Lov\'asz Local Lemma, in the initial step of the algorithm one evaluates all `bad' events (requires $\tilde O(n)$ time), and then, at each `resampling' step, one re-evaluates only those $O(1)=O_d(1)$ events that depend on the resampled random variables. Since the expected number of steps of the algorithm of Moser and Tardos is $O(n)$, this gives the overall expected running time $\tilde{O}(n)$.

Now, for every vertex $v \in V(G)$, sample $X_v\in \br{k}$ uniformly at random, and independently for all vertices. For every $i \in \br{k}$, set $S_i \coloneqq \{v \in V(G) \colon X_v = i\}$. Moreover, for every vertex $v \in V(G)$, denote by $B_v$ the event that there exists $\{i, j\} \in E(T)$ such that $v \in S_i$ and $d(v, S_j) \notin [\delta d/k, C d/k]$ where $\delta>0$ is a sufficiently small constant and $C>0$ is a sufficiently large constant. Notice that if $\neg B_v$ occurs for every $v \in V(G)$, then we get the desired bounds on the degrees which is the first key point in the first step.

Note that, for every vertex $v \in V(G)$ and $j \in \br{k}$, we have $d(v, S_j) \sim \text{Bin}\left(d, 1/k\right)$. This distribution is the heart of the obstacle concerning `large' trees. The reason for it is that whenever $k \le \frac{d}{10\log d}$, then the probability that the degree of $v$ into $S_j$, for some $j \in \br{k}$, is not in the interval $[\delta d/k, C d/k]$ is at most $d^{-8}$. Furthermore, the event $B_v$ is determined by $d+1$ random variables $X_u$, and $B_v$ is independent of all but at most $d^2$ other events $B_u$. Therefore, we can apply the Lov\'asz Local Lemma. It is worth noting here that if $k\le \frac{d}{10\log d}$, we may omit the requirement that $\{i, j\} \in E(T)$ in the definition of $B_v$ (see Section \ref{s: small prop}). Then, if for every vertex $v \in V(G)$ we have that $\neg B_v$ holds, then $d(v, S_j) \in [\delta d/k, C d/k]$ for every vertex $v \in V(G)$ and index $j \in \br{k}$. 

However, as $k$ gets closer to $\frac{(1-\epsilon)d}{\log d}$, the probability that $\text{Bin}\left(d, 1/k\right) < \delta d/k$ is not smaller than $d^{-1-\epsilon'}$, for some $\epsilon' > 0$ tending to zero as $\epsilon$ tends to zero. Thus, the treatment of this case is much more delicate and involves several rounds of applications of the algorithmic version of the Lov\'asz Local Lemma, in order to refine the initial random partition. In the rest of this section, we describe these rounds.

Notice that, for every $i \in \br{k}$, the event $B_v$ conditioned on $v \in S_i$ is more likely to occur as the degree of the $i$-th vertex in $T$ gets larger. For this reason, we will treat vertices of small degree and vertices of large degree in $T$ differently (this treatment is in Section \ref{s: high deg}). Assume that $\br{h}$ is the set of vertices of $T$ with `large' degrees. We slightly decrease the probability that $X_v = i$, for every $i \in \br{h}$. Then, after one application of the Lov\'asz Local Lemma we will be able to get rid of vertices which have more than $C d/k$ neighbours into $S_j$, for some $j \in \br{h}$. Next, we consider the neighbourhood of the vertices which have degree less than $\delta d/k$ into $S_j$, for some $j \in \br{h}$. We `resample' the vertices in this neighbourhood outside of $S_1,\ldots, S_h$ into $S_1, \dots, S_h$, that is, we move them into one of the sets $S_1,\ldots, S_h$ uniformly at random. In this way, once again using the Lov\'asz Local Lemma, we will have that $d(v, S_j) \in [\delta d/k, C d/k]$ for every $v \in V(G)$ and $j \in \br{h}$.

Next, in Section \ref{s: low degree}, we partition the remaining vertices among $S_{h+1}, \dots, S_k$. In this third application of the Lov\'asz Local Lemma, we will ensure that after the resampling we have the property that $d(v, S_j) \in [\delta d/k, Cd/k]$ for every vertex $v \in V(G)$ and for all but at most $\epsilon^{-2}$ indices $j \in \br{k}$. At this point, there will still be vertices $v \in S_i$ which have less than $\delta d/k$ neighbours into some $S_j$ where $\{i,j\}\in E(T)$. After the fourth application of the Lov\'asz Local Lemma, we will be able to obtain a partition with no such vertices (that is, $d(v, S_j) \in [\delta d/k, Cd/k]$ for every $v\in S_i$ and $\{i,j\}\in E(T)$).

Finally, in Section \ref{s: equal size}, we adjust the sets $S_1, \dots, S_k$ to be of size $n/k$ each. This is the purpose of the fifth and final round of the Lov\'asz Local Lemma. In this round, we will move vertices from sets of size bigger than $\frac{n}{k}$ to sets of size smaller than $\frac{n}{k}$ in a random manner. We will do so while ensuring the vertices $v$ we move satisfy that $d(v, S_j)\in [\delta d/k, Cd/k]$ for every $j\in \br{k}$. After this round, while keeping the bounds over the degrees of the vertices, we will be able to make each set $S_i$ to be close to $n/k$ up to and additive $n/d^{100}$ error term. Finally, to make the sets exactly of size $\frac{n}{k}$, we introduce a deterministic argument adjusting the sets $S_1,\ldots,S_k$ while changing the degree of every vertex to every set $S_i$ by at most one. We thus obtain the required sets $V_1,\ldots, V_k$.

\section{Preliminaries}\label{s: prelim}
We will make use of the following fairly standard Chernoff-type probabilistic bounds (see, for example, Appendix A in \cite{AS16}).
\begin{lemma}\label{lemma:binomial-bounds}
Let $p_1,\ldots, p_n\in [0,1]$. For every $i\in \br{n}$, let $X_i\sim Bernouli(p_i)$, and set $X=\sum_{i=1}^nX_i$. Then,
\begin{enumerate}
    \item For every $b > 0$, 
    \[
        \Pr(X > b\mathbb{E}[X]) \le \left(\frac{e}{b}\right)^{b\mathbb{E}[X]}.
    \]
    \item For any $\delta\ge 0$,
    \[
        \Pr\left(X\ge (1+\delta)\mathbb{E}[X]\right)\le e^{-\frac{\delta^2\mathbb{E}[X]}{2+\delta}}.
    \]
    \item For any $0\le t \le \mathbb{E}[X]$,
    \[
        \Pr\left(X\le \mathbb{E}[X]-t\right)\le e^{-\frac{t^2}{3\mathbb{E}[X]}}.
    \]
\end{enumerate}
\end{lemma}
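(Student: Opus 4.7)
The plan is to prove all three bounds via the classical exponential-Markov (Chernoff) method. The common starting point is that, for any $\lambda>0$, Markov's inequality applied to $e^{\lambda X}$ gives
\[
\Pr(X\ge a)\;\le\;e^{-\lambda a}\,\mathbb{E}[e^{\lambda X}]\;=\;e^{-\lambda a}\prod_{i=1}^{n}\bigl(1+p_i(e^{\lambda}-1)\bigr)\;\le\;\exp\!\bigl(\mu(e^{\lambda}-1)-\lambda a\bigr),
\]
where $\mu:=\mathbb{E}[X]=\sum_i p_i$ and the second inequality uses independence of the $X_i$ together with $1+x\le e^{x}$. From this master bound each of the three statements will follow by an appropriate choice of $\lambda$.

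For part (1), I would take $a=b\mu$ and $\lambda=\log b$ (which minimises the exponent). Substituting yields $\exp\!\bigl(\mu(b-1)-b\mu\log b\bigr)=\exp\!\bigl(-\mu(b\log b-b+1)\bigr)\le(e/b)^{b\mu}$, after dropping the additional negative $-\mu$ term. Part (2) is obtained in the same way with $a=(1+\delta)\mu$ and $\lambda=\log(1+\delta)$, producing the sharp Chernoff form $\bigl(e^{\delta}/(1+\delta)^{1+\delta}\bigr)^{\mu}$; I would then invoke the elementary inequality $(1+\delta)\log(1+\delta)-\delta\ge\delta^{2}/(2+\delta)$ for $\delta\ge 0$ (verified by differentiating both sides and checking equality at $\delta=0$) to rewrite the bound in the stated form $e^{-\delta^{2}\mu/(2+\delta)}$.

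For part (3), I would apply the same method to $-X$: for $\lambda>0$,
\[
\Pr(X\le\mu-t)\;\le\;e^{\lambda(\mu-t)}\,\mathbb{E}[e^{-\lambda X}]\;\le\;\exp\!\bigl(\lambda(\mu-t)+\mu(e^{-\lambda}-1)\bigr).
\]
Optimising over $\lambda$ gives $e^{-\lambda}=1-t/\mu$, and a brief simplification yields the standard lower-tail form $\bigl(e^{-t/\mu}/(1-t/\mu)^{1-t/\mu}\bigr)^{\mu}$. Writing $y=t/\mu\in[0,1]$, the Taylor expansion
\[
(1-y)\log(1-y)+y=\frac{y^{2}}{2}+\frac{y^{3}}{6}+\frac{y^{4}}{12}+\cdots\;\ge\;\frac{y^{2}}{3}
\]
(a termwise comparison suffices) converts this into the claimed $e^{-t^{2}/(3\mu)}$.

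The whole argument is a textbook Chernoff calculation, so I do not anticipate any genuine obstacle; the only bookkeeping steps are the two elementary inequalities used to repackage the optimised Chernoff outputs into the parameter-free forms $(e/b)^{b\mu}$, $e^{-\delta^{2}\mu/(2+\delta)}$, and $e^{-t^{2}/(3\mu)}$ stated in the lemma.
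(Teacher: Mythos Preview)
Your proof is correct and is exactly the standard Chernoff argument. The paper does not give its own proof of this lemma at all; it simply cites Appendix~A of Alon--Spencer, which contains precisely the exponential-Markov computation you carry out. So there is nothing to compare beyond noting that your write-up is self-contained where the paper defers to a reference.

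One small caveat worth making explicit in part~(1): your choice $\lambda=\log b$ requires $b>1$ for $\lambda>0$. For $0<b\le 1$ the stated bound is vacuous, since $(e/b)^{b\mu}\ge 1$ (equivalently $b(1-\log b)\ge 0$ for $b\le e$), so the inequality holds trivially. This is not a gap in the argument, just a case split you should state. Similarly in part~(3), the optimal $\lambda$ satisfies $e^{-\lambda}=1-t/\mu$, which is fine for $0\le t<\mu$; the boundary case $t=\mu$ gives the trivial bound $\Pr(X\le 0)\le e^{-\mu/3}$, which follows directly from $\Pr(X=0)=\prod_i(1-p_i)\le e^{-\mu}$.
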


We also require the following bound on the lower tail of the Binomial distribution.
\begin{lemma}\label{l: binom lower tail}
For every $\xi>0$ there exists $\delta_0>0$ such that for every $\delta\le \delta_0$ the following holds. Let $t\coloneqq t(\xi,\delta)>0$ be sufficiently large. Suppose that $np\ge (1+\xi)t$ and $p\le \frac{1}{2}$. Then,
\begin{align*}
    \mathbb{P}\left(\text{Bin}\left(n,p\right)\le \delta t\right)\le e^{-(1+2\xi/3)t}.
\end{align*}
\end{lemma}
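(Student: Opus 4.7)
The natural plan is to use a sharp Chernoff-type estimate for the lower tail of the binomial, refined carefully in the small-$\delta$ regime. The proof breaks into a reduction to the extremal value of $np$, a geometric-series argument to reduce to a single summand, and a careful estimate of that summand.

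First, I would reduce to the case $np = (1+\xi)t$. By the standard coupling of Bernoulli variables through a uniform $U_i \in [0,1]$, $\Pr(\text{Bin}(n,p') \le m)$ is non-increasing in $p'$ for fixed $n,m$. Hence, increasing $p$ only decreases the probability in question, so we may assume $np = (1+\xi)t$ without loss of generality.

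Second, I would write
\[
\Pr(\text{Bin}(n,p) \le \delta t) \le \sum_{i=0}^{\lfloor \delta t \rfloor} \binom{n}{i} p^i (1-p)^{n-i},
\]
and observe that the ratio of successive summands, $\frac{(n-i)p}{(i+1)(1-p)}$, is, for $i \le \delta t$, bounded below (using $p \le 1/2$ and $np = (1+\xi)t$) by something of order $(1+\xi)/\delta$, which can be made larger than $2$ by choosing $\delta_0 = \delta_0(\xi)$ small enough. Consequently, the sum is at most twice its largest term $\binom{n}{\lfloor \delta t\rfloor} p^{\lfloor \delta t\rfloor}(1-p)^{n-\lfloor \delta t\rfloor}$. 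The degenerate case $\delta t < 1$ is handled separately by $\Pr(X = 0) \le (1-p)^n \le e^{-np} = e^{-(1+\xi)t}$.

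Third, I would apply $\binom{n}{k}\le (en/k)^k$ and $(1-p)^{n-k}\le e^{-p(n-k)}$ together with $np=(1+\xi)t$ to obtain
\[
\binom{n}{\lfloor \delta t \rfloor} p^{\lfloor \delta t \rfloor}(1-p)^{n-\lfloor \delta t \rfloor} \le \left(\frac{e(1+\xi)}{\delta}\right)^{\delta t} e^{-(1+\xi)t + p\delta t}.
\]
Taking logarithms, the overall exponent is at most
\[
-(1+\xi)t + \delta t\left[1 + \log\!\tfrac{1+\xi}{\delta} + p\right] + O(1).
\]
Since $\delta\log(1/\delta) \to 0$ as $\delta \to 0$, I would choose $\delta_0=\delta_0(\xi)$ small enough that the bracketed correction is at most $\xi/3$ for every $\delta\le\delta_0$ (again using $p\le 1/2$), and take $t=t(\xi,\delta)$ large enough to absorb the additive $O(1)$. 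The resulting exponent is at most $-(1+2\xi/3)t$, yielding the claimed bound.

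The main technical point is the balance of constants at the end: a naive Chernoff application of the form $\Pr(X \le (1-\lambda)\mu) \le e^{-\lambda^2 \mu / 2}$ yields only $e^{-(1+\xi)t/2}$ in this regime, which is strictly weaker than what is needed. The improvement to the factor $(1 + 2\xi/3)$ relies on the sharp KL-divergence form of the bound and crucially exploits the freedom to take $\delta$ as small as we wish relative to $\xi$, so that the entropic cost $\delta t \log(1/\delta)$ of selecting $\delta t$ specific positions becomes negligible compared to the $(1+\xi)t$ gain from $(1-p)^{n-\delta t}$.
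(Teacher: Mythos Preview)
Your proposal is correct and follows essentially the same approach as the paper: reduce to $np=(1+\xi)t$, bound the lower-tail sum by (a constant times) its largest summand, estimate that summand via $\binom{n}{k}\le(en/k)^k$ and $(1-p)^{n-k}\le e^{-p(n-k)}$, and conclude using $\delta\log(1/\delta)\to 0$. The only cosmetic difference is that the paper factors out $(1-p)^n$ at the outset and then bounds $\sum_i\binom{n}{i}(p/(1-p))^i$ by $\delta t$ times its maximal term via monotonicity of $x\mapsto (c t/x)^x$ on $(0,\delta t)$, whereas you use the ratio test to get a factor $2$; both devices serve the same purpose and lead to the same final estimate.
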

\begin{proof}
Note that we may assume that $np=(1+\xi)t$. We have that
\begin{align*}
    \mathbb{P}\left(\text{Bin}\left(n, p\right) \le \delta t\right) &= \sum_{i = 0}^{\delta t} \binom{n}{i} p^i \left(1 - p\right)^{n-i} 
    = \left(1 - p\right)^{n} \cdot \sum_{i = 0}^{\delta t} \binom{n}{i} \left(\frac{p}{1 - p}\right)^i.
\end{align*}
For the first term,
\[
    \left(1 - p\right)^{n} \le e^{-pn}= e^{-(1+\xi)t}.
\]
For the second term,
\begin{align*}
\sum_{i = 0}^{\delta t} \binom{n}{i} \left(\frac{p}{1 - p}\right)^i&\le 1+\sum_{i=1}^{\delta t}\left(\frac{enp}{i(1-p)}\right)^i\le 1+\sum_{i=1}^{\delta t}\left(\frac{2e(1+\xi)t}{i}\right)^i.
\end{align*}
Since $\delta$ is sufficiently small, the function $g(x) = \left(\frac{2e(1+\xi)t}{x}\right)^x$ is increasing in $x \in (0, \delta t)$ and thus
\[
    1 + \sum_{i=1}^{\delta t}\left(\frac{2e(1+\xi)t}{i}\right)^i \le \delta t \left(\frac{11(1+\xi)}{\delta }\right)^{\delta t}\le e^{\log\left(12(1+\xi)/\delta\right)\cdot\delta t},
\]
recalling that $t=t(\xi,\delta)$ is sufficiently large. Noting that $\delta \cdot \log\left(12(1+\xi)/\delta\right)\le \xi/3$ for $\delta$ sufficiently small, we have that $\mathbb{P}\left(\text{Bin}\left(n, p\right) \le \delta t\right)\le e^{-(1+2\xi/3)t}$, as required.
\end{proof}

We will also make extensive use of the algorithmic version of the Lov\'asz Local Lemma, due to Moser and Tardos \cite{MT10}. 
\begin{theorem}[Theorem 1.2 of \cite{MT10}, rephrased]\label{th: LLL}
Let $U$ be a finite set. Let $X=(\xi_u)_{u\in U}$ be a tuple of mutually independent random variables. Let $\mathcal{F}$ be a finite set of events determined by $X$. Suppose that there exists $q$ such that for every event $F\in \mathcal{F}$, $\mathbb{P}_X(F)\le q$. Moreover, suppose that every $F\in \mathcal{F}$ depends on at most $\Delta$ other events $F'\in \mathcal{F}$. Suppose that $\beta\in (0,1)$ satisfies $q\le \beta(1-\beta)^{\Delta}$. Then, there exists an evaluation of $X$ which does not satisfy any event in $\mathcal{F}$. 

Furthermore, let $(X_n)_{n\in \mathbb{N}}$ be a sequence of mutually independent copies of $X$, that is, for every $n\in \mathbb{N}$, $X_n\sim X$. Initially, we sample $X_0$ and let $Z_0\coloneqq X_0$. At step $t\ge 1$, we pick one $F\in \mathcal{F}$ satisfied by $Z_{t-1}$  (if one exists) in an arbitrary manner. Then, we consider all $u\in U$ such that this witness $F$ depends on the $u$-th coordinate of $Z_{t-1}$, and set $(Z_t)_u=(X_t)_u$ for all such $u$ and $(Z_t)_u=(Z_{t-1})_u$ for all other $u$. If no such $F \in \mathcal{F}$ exists, the process halts and we set $\tau\coloneqq t$. Then, $\mathbb{E}\left[\tau\right]\le |\mathcal{F}|\frac{\beta}{1-\beta}$.
\end{theorem}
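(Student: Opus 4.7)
Since Theorem \ref{th: LLL} is the Moser--Tardos algorithmic local lemma, my plan is to follow the witness-tree analysis of \cite{MT10}. The strategy is to bound $\mathbb{E}[\tau]$ directly: once $\mathbb{E}[\tau] < \infty$, the process terminates almost surely, and the terminal evaluation $Z_\tau$ satisfies no event of $\mathcal{F}$, which yields both the existence statement and the running-time estimate in one shot.

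The central gadget is the \emph{witness tree} $T_t$ attached to each resampling step $t$. Its root is labelled by the event $F$ resampled at step $t$; walking backwards through the resampling log $t-1, t-2, \dots, 1$, at each earlier step $s$ with resampled event $F_s$ I look for the \emph{deepest} node already in $T_t$ whose label shares a coordinate of $U$ with $F_s$, and attach $F_s$ as a child of that node (or discard $F_s$ if no such node exists). The argument then rests on two lemmas. First, distinct steps $t$ produce distinct witness trees; this is a combinatorial fact forced by the deepest-node rule together with the observation that each resampling only updates coordinates in the support of its own event, so the last modification to every relevant coordinate is uniquely encoded in the tree. Second, for any fixed proper witness tree $T$ labelled by events of $\mathcal{F}$, the probability that $T$ ever arises during the execution is at most $\prod_{v \in T} \mathbb{P}_X(F_v) \le q^{|T|}$. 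This is proved by coupling: replaying $T$ in reverse breadth-first order on a fresh tape of independent copies of $X$, each label must be satisfied using only its own fresh coordinates, which forces the marginals to multiply.

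It then remains to control $\sum_{T \text{ rooted at } F} q^{|T|}$ for each fixed event $F \in \mathcal{F}$. The hypothesis $q \le \beta(1-\beta)^{\Delta}$ is exactly the condition under which the natural Galton--Watson process --- where each node produces, independently, a child from each of its at most $\Delta$ neighbouring events with probability $\beta$ --- has total expected progeny $\beta/(1-\beta)$. Combining with the distinct-trees lemma, the expected number of resamplings of any fixed event $F$ is at most $\beta/(1-\beta)$, and summing over the $|\mathcal{F}|$ events gives $\mathbb{E}[\tau] \le |\mathcal{F}| \cdot \beta/(1-\beta)$, as required.

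The main obstacle I expect is carefully formalising the deepest-node attachment rule so that distinct resampling steps really do yield distinct witness trees (a naive nearest-ancestor rule does not suffice, and one has to track how coordinate updates propagate through the log). The fresh-randomness coupling for the second lemma also requires care: one must verify that the events revealed during the reverse traversal do not share coordinates in a way that invalidates the product bound. Once these two technical points are in hand, the branching-process comparison and the final summation over $\mathcal{F}$ are routine.
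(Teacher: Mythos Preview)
The paper does not prove Theorem~\ref{th: LLL}; it is quoted verbatim as ``Theorem 1.2 of \cite{MT10}, rephrased'' and used as a black box. So there is no proof in the paper to compare your proposal against. Your outline is the standard witness-tree argument of Moser and Tardos, and as a high-level plan it is correct: the two lemmas you isolate (injectivity of the step-to-witness-tree map, and the $\prod_v \mathbb{P}(F_v)$ bound on the probability of a fixed tree appearing) together with the Galton--Watson comparison are exactly the ingredients of the original proof, and the symmetric hypothesis $q \le \beta(1-\beta)^{\Delta}$ is precisely what makes the branching-process sum converge to $\beta/(1-\beta)$ per root event.
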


In fact, we will utilise the following corollary.
\begin{corollary}\label{cor: lll}
Let $U$ be a finite set. Let $m\in \mathbb{N}$. Let $X=(\xi_u)_{u\in U}$ be a set of mutually independent random variables, supported on $\br{m}$. Let $S_1,\ldots, S_{m}$ be a partition of $U$ satisfying $S_i=\{u\in U\colon \xi_u=i\}$ for every $i \in \br{m}$. Let $\mathcal{F}$ be a finite set of events determined by $S_1,\ldots, S_m$. Suppose that there exists $q$ such that $\mathbb{P}_X(F)\le q$ for every event $F\in \mathcal{F}$. Moreover, suppose that every $F\in \mathcal{F}$ is determined by at most $\Delta_1$ random variables $\xi_u$, and depends on at most $\Delta_2$ other events $F'\in \mathcal{F}$. Furthermore, suppose that $\beta\in (0,1)$ satisfies that $q\le \beta(1-\beta)^{\Delta_2}$. 
Then, the probability (under the measure of $X$) that there exists a partition of $U$ into $U_1,\ldots, U_m$ which does not satisfy any event in $\mathcal{F}$ and $\big|S_i\triangle U_i\big|\le 2\Delta_1|\mathcal{F}|\frac{\beta}{1-\beta}$ for every $i\in \br{m}$, is at least $\frac{1}{2}$.
\end{corollary}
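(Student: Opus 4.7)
The plan is to apply the algorithmic Moser--Tardos statement in Theorem~\ref{th: LLL} directly to the collection $\mathcal{F}$, using the hypothesis $q \le \beta(1-\beta)^{\Delta_2}$ as the slack condition. I would initialise the algorithm with a sample $X_0 \sim X$, so that the starting configuration $Z_0 \coloneqq X_0$ exactly realises the given partition: $(X_0)_u = i$ iff $u \in S_i$. When the algorithm halts, let $Z$ denote the final configuration and set $U_i \coloneqq \{u \in U \colon Z_u = i\}$ for each $i \in \br{m}$. By the halting rule, no event of $\mathcal{F}$ is satisfied by $U_1, \ldots, U_m$, so the first half of the conclusion is automatic.

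The remaining task is to bound $|S_i \triangle U_i|$. The key observation is that at each resampling step only the coordinates on which the chosen witness event $F \in \mathcal{F}$ depends are refreshed; by hypothesis this is at most $\Delta_1$ coordinates per step. Writing $\tau$ for the total number of resampling steps, the set
\[
    R \coloneqq \{u \in U \colon (X_0)_u \ne Z_u\}
\]
of coordinates whose value changed during the run therefore satisfies $|R| \le \Delta_1 \tau$. For any fixed $i$, a variable $u$ can contribute to $|S_i \triangle U_i|$ only if its initial and final values differ, whence $|S_i \triangle U_i| \le |R| \le \Delta_1 \tau$.

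By the expectation bound in Theorem~\ref{th: LLL}, $\mathbb{E}[\tau] \le |\mathcal{F}|\frac{\beta}{1-\beta}$, so Markov's inequality yields $\tau \le 2|\mathcal{F}|\frac{\beta}{1-\beta}$ with probability at least $\tfrac{1}{2}$; on that event $|S_i \triangle U_i| \le 2\Delta_1 |\mathcal{F}| \frac{\beta}{1-\beta}$ simultaneously for all $i \in \br{m}$, as required. The only subtle point to watch out for is that the probability in the statement of the corollary is taken under the measure of $X$ alone (which determines the $S_i$), while the Moser--Tardos process uses fresh independent copies $X_1, X_2, \ldots$ along the way. This is handled by a Fubini argument: if the joint probability of the algorithm producing a valid output is at least $\tfrac{1}{2}$, then the set of initial configurations $X_0$ for which a valid $U_1, \ldots, U_m$ exists has measure at least $\tfrac{1}{2}$ under $X$, giving the claimed existence statement. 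I do not foresee a substantial obstacle; the proof is essentially a direct invocation of Theorem~\ref{th: LLL} combined with Markov's inequality, the per-step coordinate bound, and this averaging argument.
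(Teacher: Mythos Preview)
Your proposal is correct and follows essentially the same route as the paper's proof: apply Theorem~\ref{th: LLL}, use Markov's inequality to get $\tau\le 2|\mathcal{F}|\frac{\beta}{1-\beta}$ with probability at least $\tfrac12$, bound each $|S_i\triangle U_i|$ by $\Delta_1\tau$, and then transfer the bound from the joint measure to the $X_0$-marginal via the observation that the existence event $B$ depends only on $X_0$. The paper writes out your ``Fubini argument'' as the implication $\neg B\Rightarrow \neg A$ (so $\mathbb{P}(A)\le\mathbb{P}(B)$), which is exactly the contrapositive of what you describe.
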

\begin{proof}
Let $\Sigma\coloneqq (X_n)_{n\in \mathbb{N}}$, let $\tau$ and let $Z=(Z_i)_{i=0}^{\tau}$ be the random variables as in the statement of Theorem \ref{th: LLL}. Let $A$ be the event that $\tau \le 2|\mathcal{F}|\frac{\beta}{1-\beta}$. By Theorem \ref{th: LLL}, $\mathbb{E}_{\Sigma}\left[\tau\right]\le |\mathcal{F}|\frac{\beta}{1-\beta}$, and thus by Markov's inequality 
$\mathbb{P}_{\Sigma}(A)\ge\frac{1}{2}$. 

Now, let $B$ be the desired event that there exists a partition of $U$ into $U_1,\ldots, U_m$ which does not satisfy any event in $\mathcal{F}$, and $\big|S_i\triangle U_i\big|\le 2\Delta_1|\mathcal{F}|\frac{\beta}{1-\beta}$ for every $i\in \br{m}$. Note that $B$ is determined by $X_0$. Further, at every step in the algorithm, the number of $u\in U$ for which $(Z_t)_u\neq (Z_{t-1})_u$ is at most $\Delta_1$. Thus, given that $\neg B$ occurs, the number of steps necessary to find an evaluation that does not satisfy any event in $\mathcal{F}$ is more than $\frac{2\Delta_1|\mathcal{F}|\frac{\beta}{1-\beta}}{\Delta_1}=2|\mathcal{F}|\frac{\beta}{1-\beta}$. Hence, given that $\neg B$ occurs, the probability that the algorithm ran at most $2|\mathcal{F}|\frac{\beta}{1-\beta}$ steps is zero. Therefore,
\begin{align*}
   \frac{1}{2}\le \mathbb{P}_{\Sigma}(A)&=\mathbb{P}_{\Sigma}(A\cap B)+\mathbb{P}_{\Sigma}(A\cap \neg B)\\
    &=\mathbb{P}_{\Sigma}(A\cap B)\le \mathbb{P}_{\Sigma}(B)=\mathbb{P}_{X_0}(B),
\end{align*}
as claimed.
\end{proof}

Finally, let us conclude this section with a few statements on the distribution of edges in random graphs which are uniformly chosen among all graphs with a given degree sequence.
\begin{theorem}[Corollary 8 in \cite{gao2023subgraph}]\label{th:gao}
    Given a degree sequence $\mathbf{d} = (d_1, \dots, d_n)$ such that $d_1 \ge d_2 \ge \dots \ge d_n$, let $G$ be a uniform random graph on vertex set $\br{n}$ where vertex $i$ has degree $d_i$. Set $M \coloneqq \sum_{i=1}^{n} d_i$. If an integer $1 \le \ell \le M/2$ satisfies $\sum_{i=1}^{d_1} d_i = o(M - 2\ell)$, then, for every $S_1, S_2 \subseteq V(G)$,
    \[
        \mathbb{P}\left(e(S_1, S_2) \ge \ell\right) \le \binom{d(S_1)}{\ell} \cdot \frac{(d(S_2))_\ell}{(M/2)_\ell (2 + o(1))^\ell} = \binom{d(S_1)}{\ell} \cdot \frac{\binom{d(S_2)}{\ell}}{\binom{M/2}{\ell} (2 + o(1))^\ell}\,.
    \]
\end{theorem}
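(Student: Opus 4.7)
The natural strategy is to prove the bound through the configuration (pairing) model of Bollobás: attach $d_i$ half-edges (``stubs'') to vertex $i$, obtaining $M$ stubs in total, and sample a uniform perfect matching on them; conditioning the resulting multigraph on being simple recovers the uniform distribution on graphs with degree sequence $\mathbf{d}$. The plan is to carry out a first-moment estimate for edges across $(S_1,S_2)$ inside the configuration model, and then pass the inequality to the uniform model.

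For the first moment, an ordered selection of $\ell$ disjoint candidate edges $(u_1,v_1),\dots,(u_\ell,v_\ell)$ with $u_i$ a stub incident to $S_1$ and $v_i$ a stub incident to $S_2$ can be specified in $\binom{d(S_1)}{\ell}\,(d(S_2))_\ell$ ways (pick an unordered $\ell$-subset of the $S_1$-stubs, then an injection into the $S_2$-stubs). The probability that all $\ell$ pairs are simultaneously realised by the uniform pairing is
\[
    \frac{(M-2\ell-1)!!}{(M-1)!!} \;=\; \frac{1}{(M-1)(M-3)\cdots(M-2\ell+1)},
\]
and, since $\prod_{j=0}^{\ell-1}\frac{M-2j-1}{M-2j}=1-o(1)$ whenever $\ell$ stays away from $M/2$, the denominator equals $(2+o(1))^\ell (M/2)_\ell$. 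Applying $\mathbb{P}(e(S_1,S_2)\ge\ell)\le\mathbb{E}\!\left[\binom{e(S_1,S_2)}{\ell}\right]$ then yields the claimed inequality within the configuration model.

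The final step is to transfer the bound to the uniform model using $\mathbb{P}_{\mathrm{uniform}}(A)=\mathbb{P}_{\mathrm{config}}(A\mid \mathrm{simple})\le \mathbb{P}_{\mathrm{config}}(A)/\mathbb{P}_{\mathrm{config}}(\mathrm{simple})$. The hypothesis $\sum_{i=1}^{d_1} d_i = o(M-2\ell)$ enters here: it guarantees that even after the $2\ell$ stubs used by the candidate edges are exposed, the remaining pairing is ``regular enough'' that the probability of simplicity is controlled and decays by at most a $(1+o(1))^\ell$ factor which is absorbed into the $(2+o(1))^\ell$ slack already present. The cleanest implementation is a switching argument: starting from a pairing with at least $\ell$ cross-edges, switch one such edge out to a non-cross position, count forward/backward switches, and use the hypothesis to control the dominant correction term uniformly in $S_1,S_2$.

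The first-moment calculation in the configuration model is essentially mechanical. The main obstacle is the transfer step: one must verify that passing to the uniform model does not inflate the probability by more than a $(1+o(1))^\ell$ factor \emph{uniformly} over all pairs $S_1,S_2$, which is exactly where the Gao--Ohapkin hypothesis on $\sum_{i=1}^{d_1} d_i$ does its work, since a naïve bound of the form $1/\mathbb{P}_{\mathrm{config}}(\mathrm{simple}) = e^{O(\sum \binom{d_i}{2}/M)}$ would be catastrophic for $d=d_1$ growing with $n$ and $\ell$ large.
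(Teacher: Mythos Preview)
This statement is not proved in the paper; it is quoted as Corollary~8 of Gao--Ohapkin and used as a black box, so there is no in-paper proof to compare against. Your configuration-model first-moment computation is essentially right, modulo one imprecision: the product $\prod_{j<\ell}\frac{M-2j-1}{M-2j}$ equals $1-o(1)$ only when $\ell=o(M)$; what actually holds under the hypothesis (which forces $M-2\ell\to\infty$) is that it is $e^{-o(\ell)}$, and that still yields the $(2+o(1))^\ell$ form.

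The genuine gap is the passage to the uniform simple-graph model. Your proposed bound on the ratio $\mathbb{P}_{\mathrm{config}}(\mathrm{simple}\mid\text{$\ell$ prescribed pairs matched})/\mathbb{P}_{\mathrm{config}}(\mathrm{simple})$ by $(1+o(1))^\ell$ is asserted, not proved, and for degree sequences with $d_1$ unbounded no such estimate is available off the shelf; the simplicity probability need not be bounded away from zero and its sensitivity to exposing $2\ell$ stubs is genuinely delicate. You then correctly name switching as the right tool, but switching is not a repair layered on top of the pairing calculation: the Gao--Ohapkin argument is a switching performed \emph{directly in the space of simple graphs}, stratifying by the value of $e(S_1,S_2)$ and bounding the ratio of adjacent strata. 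The hypothesis $\sum_{i\le d_1}d_i=o(M-2\ell)$ is precisely the condition that makes the forbidden-switch correction terms negligible in that ratio. In your outline the switching is a one-line afterthought, whereas in the actual proof it carries the entire argument and the configuration model does not appear.
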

We will also use the following corollary.
\begin{corollary}\label{cor: gao}
Under the same setting as in the statement of Theorem \ref{th:gao}, if $d(S_2)\le M/2$, we further have
    \begin{align*}
        \mathbb{P}\left(e(S_1, S_2) \ge \ell\right) &\le \binom{d(S_1)}{\ell} \cdot \left(\frac{d(S_2)}{M (1 + o(1))}\right)^\ell.
    \end{align*}. 
\end{corollary}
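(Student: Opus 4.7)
The plan is to derive Corollary~\ref{cor: gao} directly from Theorem~\ref{th:gao} by bounding the ratio $\binom{d(S_2)}{\ell}/\binom{M/2}{\ell}$ in terms of $d(S_2)/M$ using the assumption $d(S_2)\le M/2$. Concretely, I would rewrite the ratio as the falling-factorial quotient
\[
\frac{\binom{d(S_2)}{\ell}}{\binom{M/2}{\ell}}=\frac{(d(S_2))_\ell}{(M/2)_\ell}=\prod_{i=0}^{\ell-1}\frac{d(S_2)-i}{M/2-i},
\]
and then observe that, because $d(S_2)\le M/2$, the function $i\mapsto (d(S_2)-i)/(M/2-i)$ is non-increasing in $i\in[0,d(S_2)]$ (its derivative with respect to $i$ is $(d(S_2)-M/2)/(M/2-i)^2\le 0$). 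In particular every factor in the product is bounded above by the $i=0$ factor $2d(S_2)/M$, which gives
\[
\frac{\binom{d(S_2)}{\ell}}{\binom{M/2}{\ell}}\le\left(\frac{2d(S_2)}{M}\right)^{\!\ell}.
\]

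Substituting this into the bound supplied by Theorem~\ref{th:gao} yields
\[
\mathbb{P}\bigl(e(S_1,S_2)\ge\ell\bigr)\le \binom{d(S_1)}{\ell}\cdot\frac{(2d(S_2)/M)^\ell}{(2+o(1))^\ell}=\binom{d(S_1)}{\ell}\cdot\left(\frac{d(S_2)}{M(1+o(1))}\right)^{\!\ell},
\]
which is exactly the claimed inequality. There is no real obstacle here; the only subtle point is the monotonicity of the individual falling-factorial ratios, which is exactly where the hypothesis $d(S_2)\le M/2$ is needed, and absorbing the factor of $2$ from $2d(S_2)/M$ into the $(2+o(1))^\ell$ denominator.
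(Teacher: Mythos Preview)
Your proof is correct and takes essentially the same approach as the paper: both start from the falling-factorial form of Theorem~\ref{th:gao}, use the inequality $\frac{d(S_2)-i}{M/2-i}\le \frac{d(S_2)}{M/2}$ (which is exactly your monotonicity observation) to bound the product by $(2d(S_2)/M)^\ell$, and then absorb the factor of $2$ into the $(2+o(1))^\ell$ denominator.
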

\begin{proof}
By Theorem \ref{th:gao}, $\mathbb{P}\left(e(S_1, S_2) \ge \ell\right) \le\binom{d(S_1)}{\ell}\frac{(d(S_2))_\ell}{(M/2)_\ell (2 + o(1))^\ell}$. Since $d(S_2)\le \frac{M}{2}$, we have that $\frac{d(S_2)-i}{M/2-i}\le \frac{d(S_2)}{M/2}$ for every $0\le i \le d(S_2)$. Thus,
\begin{align*}
\binom{d(S_1)}{\ell}\frac{(d(S_2))_\ell}{(M/2)_\ell (2 + o(1))^\ell}&\le \binom{d(S_1)}{\ell}\left(\frac{d(S_2)}{M/2}\right)^{\ell}\cdot \frac{1}{(2(1+o(1)))^{\ell}}\\&=\binom{d(S_1)}{\ell} \cdot \left(\frac{d(S_2)}{M (1 + o(1))}\right)^\ell,
\end{align*}
as required.
\end{proof}

We will further utilise the following lemma.
\begin{lemma}\label{lemma:Gao-usage}
    There exists a sufficiently small constant $\xi>0$ such that the following holds. Given a degree sequence $\mathbf{d} = (d_1, \dots, d_n)$ such that $d_1 \ge d_2 \ge \dots \ge d_n$, let $G$ be a uniform random graph on vertex set $\br{n}$ where vertex $i$ has degree $d_i$. Set $M \coloneqq \sum_{i=1}^{n} d_i$. For every $t\in \br{n}$ and every two disjoint sets $A$ and $B$ satisfying
    \[
        M \ge 8(1-3\xi)t, \quad d(A), d(B) \le 2(1+2\xi)t, \quad \text{and} \quad d_1=o\left(\sqrt{t}\right),
    \]
    we have
    \[
        \Pr\left(e(A, B) \ge (1-2\xi)t\right) \le 0.95^{t}.
    \]
\end{lemma}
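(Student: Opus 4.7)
The plan is to apply Theorem~\ref{th:gao} to $S_1 = A$, $S_2 = B$ with $\ell \coloneqq \lceil (1-2\xi) t\rceil$. First I would verify the hypotheses: $\ell \le M/2$ holds because $M \ge 8(1-3\xi)t \ge 2\ell$ for $\xi$ small, and $\sum_{i=1}^{d_1} d_i \le d_1^2 = o(t) = o(M - 2\ell)$ follows from $d_1 = o(\sqrt t)$ together with $M - 2\ell \ge (6 - 20\xi)t - 2 = \Theta(t)$. The theorem then yields
\[
\Pr(e(A,B) \ge \ell) \le \binom{d(A)}{\ell}\cdot \frac{(d(B))_\ell}{(M/2)_\ell\,(2+o(1))^\ell}.
\]

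Since the right-hand side is monotone increasing in $d(A)$ and $d(B)$ and decreasing in $M$, it suffices to bound it in the extremal case $d(A) = d(B) = 2(1+2\xi)t$ and $M = 8(1-3\xi)t$. Setting $x \coloneqq d(A)/\ell$, $y \coloneqq d(B)/\ell$, $z \coloneqq M/(2\ell)$ and applying Stirling's formula $\log n! = n\log n - n + O(\log n)$, the linear-in-$\ell$ contributions to $\log \binom{d(A)}{\ell}$ cancel, and the $\ell \log \ell$ contributions from $\log(d(B))_\ell$ and $\log(M/2)_\ell$ cancel in their ratio. What remains is
\[
\tfrac{1}{\ell}\log(\mathrm{RHS}) = x\log x - (x-1)\log(x-1) + y\log y - (y-1)\log(y-1) - z\log z + (z-1)\log(z-1) - \log 2 + o(1).
\]

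At $\xi = 0$ we have $x = y = 2$, $z = 4$, and this expression evaluates to $-5\log 2 + 3\log 3 = \log(27/32) \approx -0.170$, which is strictly less than $\log 0.95 \approx -0.0513$. By continuity in $\xi$, for $\xi$ sufficiently small the expression remains below $(\log 0.95)/(1 - 2\xi)$, which (using $\ell = (1-2\xi)t$) translates to $\mathrm{RHS} \le 0.95^{(1+o(1))t} \le 0.95^t$ for $t$ sufficiently large. The main obstacle is the exponential-rate Stirling computation: thanks to the comfortable gap between $27/32 \approx 0.844$ and $0.95$, both the $\xi$-perturbation and the $O(\log t)$ Stirling error can be absorbed, but this needs to be executed carefully.
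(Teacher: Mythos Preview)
Your proposal is correct and follows essentially the same route as the paper: both apply Theorem~\ref{th:gao} with $\ell=(1-2\xi)t$, verify its hypotheses via $d_1^2=o(t)$ and $M-2\ell=\Theta(t)$, and then estimate the resulting product of binomials/falling factorials by Stirling to obtain an exponential rate strictly below $1$. The only cosmetic difference is that the paper bounds the numerator binomials crudely by $\binom{2(1+2\xi)t}{\ell}\le 4^{(1+2\xi)t}$ and reserves Stirling for the denominator $\binom{M/2}{\ell}\ge\binom{4(1-3\xi)\ell}{\ell}\approx(256/27)^{\ell}$, whereas you carry out a single unified Stirling computation; both arrive at the same base $27/32\approx 0.844$ at $\xi=0$, with ample room below $0.95$ to absorb the $\xi$-perturbation and the $O(\log t)$ error.
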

\begin{proof}
    Set $\ell \coloneqq (1-2\xi)t$. We have that $M-2\ell=\Omega(t)$, $\sum_{i=1}^{d_1}d_i\le d_1^2=o(t)$, and $d(A),d(B)\le M/2$. We may thus apply Theorem \ref{th:gao}. We have
    \begin{align*}    
        \Pr\left(e(A, B) \ge \ell\right) \le \binom{d(A)}{\ell} \cdot \frac{\binom{d(B)}{\ell}}{\binom{M/2}{\ell} \cdot (2 + o(1))^{\ell}}.
    \end{align*}
    Further,
    \[
        \binom{d(A)}{\ell} \le \binom{2(1+2\xi)t}{\ell} \le 4^{(1+2\xi)t}\le 4^{(1+5\xi)\ell},
    \]
    where the last inequality is true for sufficiently small $\xi$. We note that the same upper bound holds for $\binom{d(B)}{\ell}$. Moreover, 
    \[
        \binom{M/2}{\ell} \ge \binom{4(1-3\xi) \ell}{\ell} =\frac{\left((4-12\xi)\ell\right)!}{(\ell)! \left((3-12\xi)\ell\right)!}.
    \] Thus, by Stirling's approximation, for sufficiently small $\xi$,
    \begin{align*}
        \binom{M/2}{\ell} &\ge \frac{\left(4-12\xi\right)^{(4-12\xi)\ell}}{\left(3-12\xi\right)^{(3-12\xi)\ell}} \cdot \frac{1}{\Theta\left(\sqrt{\ell}\right)} 
        \\ &\ge \frac{4^{4\ell}}{3^{3\ell}} \cdot \frac{1}{\Theta\left(100^{\xi \ell} \sqrt{\ell}\right)} \ge 9.4^{\ell} \cdot \frac{1}{\Theta\left(100^{\xi \ell} \sqrt{\ell}\right)}.
    \end{align*}

    Hence, assuming $\xi$ is sufficiently small,
    \begin{align*}
        \Pr\left(e(A, B) \ge \ell\right) &\le \frac{4^{2(1+5\xi)\ell}}{9.4^{\ell} (2+o(1))^{\ell}} \cdot \Theta\left(100^{\xi \ell} \sqrt{\ell}\right) \le 0.9^{\ell}\le  0.95^{t}.
    \end{align*}
    
\end{proof}

\section{Planting the seeds}\label{s: proposition}
As mentioned in Section \ref{s: outline}, the proof of Theorem \ref{th: main} consists of two main steps. In order to find a $T$-factor in $G(n, d)$, we will partition the vertices of $G(n, d)$ into $|V(T)|$ sets of the same size, each set represents a different vertex in the tree $T$, and find a perfect matching between the $i$-th set and the $j$-th set for every $\{i, j\} \in E(T)$. In this section, we find a partition of the vertices into $|V(T)|$ sets which satisfies two crucial properties, which in turn will allow us to find the desired perfect matchings in the second step in Section \ref{s: theorems proof}.

For every pair of integers $d$ and $n$, let $\mathcal{G}_d$ be the family of all $d$-regular graphs on $n$ vertices, such that there are no two cycles of length at most $10$ at distance less than $10$ from each other. Recall that we treat $d$ as fixed, and consider the asymptotic as $n\to\infty$. Note that \textbf{whp} $G(n,d)\in \mathcal{G}_d$ (see, for example, \cite{W99}). 

The main result of this section, which is the first step in the proof of Theorem \ref{th: main}, is the following.
\begin{proposition}\label{prop: main path}
For every $\epsilon > 0$, there exist a sufficiently small constant $\delta\coloneqq \delta(\epsilon)>0$, a sufficiently large constant $C\coloneqq C(\epsilon)>0$, and a sufficiently large integer $d_0$ such that the following holds for any $d\ge d_0$. 

Let $T$ be a tree on $k \le (1-\epsilon) \frac{d}{\log d}$ vertices. Let $G\in \mathcal{G}_d$ and suppose that $n$ is divisible by $k$. Let $S_1,\ldots, S_k$ be a uniformly random partition of $V(G)$: $\mathbb{P}\left(v\in S_i\right)= \frac{1}{k}$ for every $i\in \br{k}$, $v\in V(G)$ and the random choice of $i\in\br{k}$ for $v\in V(G)$ is performed independently of all the other vertices.

Then, with probability bounded away from zero, there are disjoint sets $V_1,\ldots, V_k\subseteq V(G)$, each of size $\frac{n}{k}$, with the following properties.
\begin{enumerate}[(P\arabic*{})]
    \item $|S_i\triangle V_i|=o_d(n/k)$ for every $i\in \br{k}$. \label{p: close to uniform}
    \item $d(v,V_j)\in \left[\frac{\delta d}{k},\frac{Cd}{k}\right]$ for every $\{i, j\} \in E(T)$ and $v \in V_i$. \label{p: good degree between}
\end{enumerate}
\end{proposition}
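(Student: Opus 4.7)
The plan is to build $V_1,\ldots,V_k$ by successively refining the initial uniform partition $S_1,\ldots,S_k$ through a constant number of applications of the algorithmic Lov\'asz Local Lemma (Corollary~\ref{cor: lll}). Each round will eliminate a specific family of ``degree violation'' events while, via the running time bound packaged inside Corollary~\ref{cor: lll}, only resampling $o_d(n/k)$ vertices in total; since the symmetric differences $|S_i\triangle V_i|$ compose additively, property~\ref{p: close to uniform} will survive all rounds. In each round the typical bad event I set up is determined by at most $d+1$ of the assignment variables $X_v$ and depends on at most $d^{O(1)}$ other bad events, so as long as I can bound its probability by $d^{-\omega(1)}$ the LLL applies with parameter $\beta=\beta(\epsilon)$ small enough for Corollary~\ref{cor: lll} to yield the $o_d(n/k)$ resampling control.

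The core difficulty is that when $k=(1-\epsilon)d/\log d$, Lemma~\ref{l: binom lower tail} only gives $\Pr(\mathrm{Bin}(d,1/k)<\delta d/k)\approx d^{-(1-O(\epsilon))}$, which is too large to beat $\sim d^2$ dependencies in a single LLL pass. To circumvent this I split $V(T)$ into a ``high-degree'' set $[h]$ (vertices with $d_T(\cdot)\ge\Delta^*$ for a constant $\Delta^*=\Delta^*(\epsilon)$) and the rest; since $T$ is a tree, $h\le 2(k-1)/\Delta^*$ is a constant depending only on $\epsilon$. In Round~1 I slightly decrease $\Pr(X_v=i)$ for $i\in[h]$ and use LLL to kill the upper-tail events $d(v,S_j)>Cd/k$, $j\in[h]$ (probabilities $d^{-\omega(1)}$ by Lemma~\ref{lemma:binomial-bounds}(i) for $C$ large). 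In Round~2 I locate the few violators $v$ with $d(v,S_j)<\delta d/k$ for some $j\in[h]$ and resample their external neighbourhoods uniformly into $S_1,\ldots,S_h$; the corresponding LLL pass now succeeds because the resampling pool is sparse and the girth condition in $\mathcal{G}_d$ keeps the local dependency structure tree-like, so the new lower-tail probabilities are genuinely $d^{-\omega(1)}$.

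The remaining three rounds handle the low-degree part of $T$ and the size adjustment. Round~3 redistributes the unassigned vertices among $S_{h+1},\ldots,S_k$ and uses LLL to enforce that each $v$ has $d(v,S_j)\in[\delta d/k,Cd/k]$ for all but at most $\epsilon^{-2}$ indices $j$ (bounding the probability that more than $\epsilon^{-2}$ indices fail via a multinomial tail estimate). Round~4 aligns, for each $v\in S_i$ with $i\notin[h]$, the few remaining bad indices so that they avoid $N_T(i)$; this is possible because choosing $\Delta^*\gg \epsilon^{-2}$ makes $|N_T(i)|<\Delta^*$ comparable with the count of bad indices from Round~3. After Round~4, property~\ref{p: good degree between} already holds, but the sets need not have size $n/k$ exactly. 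Round~5 moves vertices between oversized and undersized parts, restricted to $v$ with $d(v,S_j)\in[\delta d/k,Cd/k]$ for \emph{every} $j\in[k]$, and uses LLL to drive the imbalance below $n/d^{100}$; a short deterministic swapping argument then finishes the equalisation, changing each $d(v,V_j)$ by at most one and so preserving~\ref{p: good degree between}.

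The main obstacle I anticipate is Round~2. After the external neighbourhoods of the Round~1 violators are thrown uniformly into $[h]$, the new lower-tail events need probabilities below the LLL threshold, and one has to justify this using both the small size of the resampling pool (which requires a careful concentration argument for the number of Round~1 violators) and the girth property of $\mathcal{G}_d$ (which limits how many resampled neighbours a single $v$ can have among its $d$ neighbours). Calibrating the biases $\eta$, the threshold $\Delta^*$, the constants $\delta,C$ and the LLL parameter $\beta$ so that all five rounds simultaneously yield probability $d^{-\omega(1)}$ for their bad events, while keeping the cumulative resampling at $o_d(n/k)$, is the delicate bookkeeping part of the argument.
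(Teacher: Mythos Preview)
Your five-round outline matches the paper's architecture, but there is a genuine gap at the very first step that propagates fatally into Round~2. You assert that with a constant threshold $\Delta^*=\Delta^*(\epsilon)$ the number of high-degree tree vertices satisfies ``$h\le 2(k-1)/\Delta^*$ is a constant depending only on $\epsilon$''. This is simply false: $k$ may be as large as $(1-\epsilon)d/\log d$, so $h$ can be $\Theta(k)$. The error is not cosmetic, because it is exactly what your Round~2 relies on. With $h=\Theta(k/\Delta^*)$, Lemma~\ref{l: binom lower tail} only gives
\[
\Pr\bigl(\exists j\in[h]:\ d(v,S_j)<\delta d/k\bigr)\ \lesssim\ h\cdot d^{-1-2\epsilon/3}\ =\ \Theta\bigl(d^{-2\epsilon/3}/\log d\bigr),
\]
so the violator set $B$ has size $\Theta(n/d^{2\epsilon/3})$, and a fixed vertex has a violator among its $d$ neighbours with probability $1-o(1)$. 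Hence $N(B)\setminus\bigcup_{i\le h}S_i$ is essentially all of $V(G)$, and ``resampling their external neighbourhoods uniformly into $S_1,\ldots,S_h$'' moves $\Theta(n)$ vertices, blowing each $|S_i|$ up to order $n/h\gg n/k$. This destroys~\ref{p: close to uniform} outright and makes the Round~5 balancing impossible. The girth hypothesis in $\mathcal{G}_d$ only controls local cycle structure; it does nothing to bound the global size of $N(B)$.

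The paper repairs this with two coupled choices that your plan lacks. First, the high-degree threshold is $d^{1-\beta}$, not a constant, so $h<d^\beta$ is genuinely tiny. Second, the initial bias is not an arbitrary small $\eta$ but the specific $\alpha=d^{-c}$ produced by Lemma~\ref{l:alpha-choice}, tuned so that $\Pr(v\in B)=\alpha^2/d$ \emph{exactly}. This forces $|N(B)|\le 3\alpha^2 n=o(n)$; one then resamples a superset $U$ of size $\alpha n/1000\supseteq N(B)$ into $S_1,\ldots,S_h$ with probabilities $p_i\approx 1000/k$ (Lemma~\ref{l: sprinkling probability}). The calibration achieves two things simultaneously: the pool $U$ is $o(n)$, so~\ref{p: close to uniform} survives, and every violator has almost all of its $d$ neighbours inside $U$, each now hitting a given $S_j$ with probability $\approx 1000/k$ rather than $1/k$, which is what drives the new lower-tail probability down to $d^{-101}$ (Lemma~\ref{l: W_2 probability}). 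Without an analogue of Lemma~\ref{l:alpha-choice} there is no way to make the resampling pool small and the degree boost large at the same time. (A minor additional point: the LLL parameter in the paper is $\beta=4d^{-100}$, not a constant $\beta(\epsilon)$, and the bad-event target is $d^{-100}$, not $d^{-\omega(1)}$; a constant $\beta$ against $\Delta_2=d^{O(1)}$ dependencies would force $q$ to be doubly exponentially small.)
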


The proof of Proposition \ref{prop: main path} is composed of five steps. Let us present here the overview of the proof and the organisation of this section.

In the first step of the proof of Proposition \ref{prop: main path}, appearing in Section \ref{s: high deg}, we take care of the sets $V_i$ under construction which correspond to vertices of high degree in $T$.

In the second step of the proof of Proposition \ref{prop: main path}, appearing in Section \ref{s: low degree}, we construct the remaining sets in the partition of $V(G)$ (that is, the sets corresponding to vertices of low degree in $T$). After this step, for every $\{i, j\} \in E(T)$, we will no longer have vertices in the $i$-th set in the partition whose degree into the $j$-th set is greater than $Cd/k$. However, we may still have a small amount of vertices with degree less than $\delta d/k$. In the third step of the proof of Proposition \ref{prop: main path}, appearing in Section \ref{s: bad vertices}, we get rid of all such vertices. Lastly, in the final step of the proof of Proposition \ref{prop: main path}, appearing in Section \ref{s: equal size}, we will balance the sets of the partition to be of size exactly $n/k$ while we ensure that the requested properties are kept. 

As discussed in Section \ref{s: outline}, the proof is much simpler whenever one assumes $k \le \frac{d}{10\log d}$. Indeed, then some of the above steps may be skipped. In Sections~\ref{s: high deg} through \ref{s: equal size}, we focus on the case where $k \ge \frac{d}{10 \log d}$. In Section \ref{s: small prop}, we provide a proof for the smaller values of $k$.

In Sections~\ref{s: high deg} through \ref{s: equal size}, we let $T$ be a tree on $\br{k}$, and (unless explicitly stated otherwise) we assume that $k\ge \frac{d}{10\log d}$. 

\subsection{Vertices of high degree in the tree}\label{s: high deg} 
In this section, we build the sets in the partition of $V(G)$ that correspond to vertices of high degree in the tree. Let $\beta\coloneqq \beta(\epsilon) > 0$ be a sufficiently small constant. Denote by $H_{deg}(T)$ the set of vertices of $T$ whose degree is at least $d^{1-\beta}$, and let $h\coloneqq |H_{deg}(T)|$, noting that $h < d^{\beta}$, since $k=|V(T)|<d$. Assume WLOG that $\br{h} \subseteq V(T) = \br{k}$ is exactly the set $H_{deg}(T)$. We construct the first $h$ sets of the partition, ensuring that every $v\in V(G)$ will have degree between $\delta d/k$ and $Cd/k$ into each one of these sets.

A key tool here is the algorithmic version of the Lov\'asz Local Lemma. We build the first $h$ sets in two rounds. First, we construct random $h$ sets by assigning each vertex into each one of them with probability $(1-\alpha)/k$ for a suitable choice of $\alpha$. After this sample, \textbf{whp} we will not have vertices with degree larger than $C d/k$ into any of the sets. However, we will have a small amount of vertices of degree smaller than $\delta d / k$ into some of the sets. We denote this set of vertices by $B$. In the next round, we will resample the neighbourhood of $B$ outside of the first $h$ sets in the partition, and put each vertex into each one of the first $h$ sets with an appropriate probability, ensuring the expected size of the sets is $n/k$. As we will see, this probability will be at least $900/k$. This, in turn, will allow us to get rid of vertices with less than $\delta d/k$ neighbours into any of the first $h$ sets in the partition.

The next lemma determines the value of $\alpha$ which should be considered.
\begin{lemma}\label{l:alpha-choice}
There exists $c \in [\epsilon/4, 5]$ such that $\alpha = d^{-c}$ satisfies
\[
        \mathbb{P}\left(\text{Bin}\left(d, \frac{1 - \alpha}{k}\right) \le \delta \log d\right) = \frac{\alpha^2}{d\cdot h}.
\]
\end{lemma}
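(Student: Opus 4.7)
The plan is to apply the intermediate value theorem to the continuous function
\[
g(c) \coloneqq \mathbb{P}\left(\text{Bin}\left(d, \tfrac{1 - d^{-c}}{k}\right) \le \delta \log d\right) - \frac{d^{-2c-1}}{h}
\]
on $c \in [\epsilon/4, 5]$. Continuity is immediate since the binomial CDF is a polynomial in $p = (1-d^{-c})/k$, which is in turn continuous in $c$; it thus suffices to check that $g(5) > 0$ and $g(\epsilon/4) < 0$.

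For the endpoint $c = 5$, the plan is to lower-bound the binomial probability by the probability of getting zero successes, giving $(1 - (1-d^{-5})/k)^d \ge e^{-(d/k)(1 + o(1))}$. The standing assumption $k \ge d/(10 \log d)$ in force throughout Section \ref{s: proposition} gives $d/k \le 10 \log d$, so this is at least $d^{-10 - o(1)}$. The right-hand side is $d^{-11}/h \le d^{-11}$ (assuming $h \ge 1$; otherwise the statement is vacuous), so $g(5) > 0$ for $d$ large.

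For the endpoint $c = \epsilon/4$, the plan is to apply Lemma \ref{l: binom lower tail} with $\xi = \epsilon$ and $t = \log d$. The hypothesis $np \ge (1+\xi) t$ reduces, using $d/k \ge \log d/(1-\epsilon)$, to $(1 - d^{-\epsilon/4})/(1 - \epsilon) \ge 1 + \epsilon$, equivalently $d^{-\epsilon/4} \le \epsilon^2$, which holds for $d$ large. Provided $\delta = \delta(\epsilon)$ was fixed small enough for Lemma \ref{l: binom lower tail} to apply at $\xi = \epsilon$, the conclusion is
\[
\mathbb{P}\left(\text{Bin}\left(d, \tfrac{1 - d^{-\epsilon/4}}{k}\right) \le \delta \log d\right) \le e^{-(1 + 2\epsilon/3)\log d} = d^{-1 - 2\epsilon/3}.
\]
Meanwhile the right-hand side is $d^{-1-\epsilon/2}/h \ge d^{-1-\epsilon/2-\beta}/2$, where we use $h \le 2 d^{\beta}$, which follows from $T$ having $k - 1 < d$ edges so that at most $2d/d^{1-\beta} = 2 d^\beta$ vertices can have degree $\ge d^{1-\beta}$. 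For $\beta$ sufficiently small in terms of $\epsilon$ (e.g.\ $\beta < \epsilon/12$), this comfortably exceeds $d^{-1-2\epsilon/3}$, so $g(\epsilon/4) < 0$.

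The only (mild) obstacle is coordinating the constants: $\delta$ must be small enough for Lemma \ref{l: binom lower tail} to apply at $\xi = \epsilon$, and $\beta$ must be small enough for the comparison at $c = \epsilon/4$. Both $\delta$ and $\beta$ are introduced earlier (in Proposition \ref{prop: main path} and at the top of Section \ref{s: high deg}) as sufficiently small constants depending on $\epsilon$, so this is available. The intermediate value theorem then produces $c \in (\epsilon/4, 5)$ with $g(c) = 0$, as required.
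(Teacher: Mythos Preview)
Your proof is correct and follows essentially the same approach as the paper's: both apply the intermediate value theorem after checking the sign at the two endpoints, using the probability of zero successes together with the standing hypothesis $k \ge d/(10\log d)$ at $c=5$, and Lemma~\ref{l: binom lower tail} together with $h \le d^{\beta}$ (for $\beta$ small in terms of $\epsilon$) at $c=\epsilon/4$. The only difference is cosmetic---you parametrize by $c$ rather than by $\alpha$---and your constants are arranged slightly differently, but the argument is the same.
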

In relation to Theorem \ref{cor: running time}, we note that for the proof to follow, it is sufficient to $n^{-3}$-approximate $\alpha$. As we know that $c\in [\epsilon/4,5]$, the bisection method   (see, for example, \cite{BF93}) gives us an algorithm of finding an $n^{-3}$-approximation of $\alpha$ within $O_{\epsilon}(\log n)$ steps.  
\begin{proof}
Consider the function $f(\alpha) = \mathbb{P}\left(\text{Bin}\left(d, \frac{1 - \alpha}{k}\right) \le \delta \log d\right) - \frac{\alpha^2}{d \cdot h}$. Let us show that there exists $\epsilon/4 \le c\le 5$ such that $f(d^{-c}) = 0$. Noting that $f(\alpha)$ is continuous, it suffices to show that $f\left(d^{-\epsilon/4}\right)<0$, and that $f\left(d^{-5}\right)>0$.

Let us first show that $f(d^{-\epsilon/4}) < 0$. Note that $$d\cdot \frac{1-d^{-\epsilon/4}}{k}\ge \frac{(1-d^{-\epsilon/4})\log d}{1-\epsilon}\ge (1+\epsilon)\log d,$$
where the first inequality is true by the assumption that $k \le (1-\epsilon) \frac{d}{\log d}$ and the last inequality holds for large enough $d$. Thus, by Lemma \ref{l: binom lower tail}, for $\delta$ sufficiently small with respect to $\epsilon$,
\begin{align*}
    \mathbb{P}\left(\text{Bin}\left(d, \frac{1-d^{-\epsilon/4}}{k}\right) \le \delta \log d\right)\le e^{-(1+2\epsilon/3)\log d}<\left(d^{-\epsilon/4}\right)^2/(dh),
\end{align*}
where the last inequality holds for $\beta>0$ sufficiently small. Thus, we obtain $f\left(d^{-\epsilon / 4}\right) < 0$.
        
Furthermore, for every $\alpha \in [0,1]$,
\begin{align*}
    \mathbb{P}\left(\text{Bin}\left(d, \frac{1 - \alpha}{k}\right) \le \delta \log d\right)& > \mathbb{P}\left(\text{Bin}\left(d, \frac{1}{k}\right)=0\right) =\left(1 - \frac{1}{k}\right)^{d}\\
    &\ge e^{-10.5\log d} = d^{-10.5},
\end{align*}
where the last inequality uses that $k\ge \frac{d}{10\log d}$. Thus, we have
\[
    \mathbb{P}\left(\text{Bin}\left(d, \frac{1 - d^{-5}}{k}\right) \le \delta \log d\right)> d^{-10.5}
    >\frac{d^{-2\cdot 5}}{d\cdot h},
\]
and thus $f(d^{-5}) > 0$.

Recalling that  $f(\alpha)$ is a continuous function in $\alpha$, and $f(d^{-\frac{\epsilon}{4}}) < 0 < f(d^{-5})$, there exists $\epsilon/4 \le c \le 5$ such that $f(d^{-c}) = 0$.     
\end{proof}

Throughout the rest of the section, we let $\alpha$ be as in the statement of Lemma \ref{l:alpha-choice}. For every vertex $v \in V(G)$, define the random variable $X_v$ with the following distribution.
\begin{align*}
    \Pr(X_v = i) = \begin{cases}
        \frac{1 - \alpha}{k}, \quad & i \in \br{h} \\
        1-\frac{(1-\alpha)h}{k}, \quad & i = 0.
    \end{cases}
\end{align*}
For every $i \in \br{h}$, let $S_i \coloneqq \{v\in V(G)\colon X_v=i\}$. For every vertex disjoint sets $U_1,\ldots, U_h$, let 
\begin{align*}
    B(U_1,\ldots, U_h)&\coloneqq \left\{v\in V(G)\colon \exists i\in \br{h}, d(v, U_i)\le \delta\log d\right\},\\
    W(U_1,\ldots, U_h)&\coloneqq \left\{v\in V(G)\colon \exists i \in \br{h}, d(v,U_i)\ge C\log d\right\}.
\end{align*}
Set $B_1\coloneqq B(S_1,\ldots, S_h)$ and $W_1\coloneqq W(S_1,\ldots, S_h)$.

Let us first estimate several probabilities that will be useful for us in the proof. 
\begin{lemma}\label{l: probabilities of b1 and w1}
For every $v\in V(G)$, we have the following.
\begin{enumerate}
    \item $\mathbb{P}\left(v\in W_1\right)\le d^{-100}$.
    \item $\mathbb{P}\left(v\in B_1\right)\le \frac{\alpha^2}{d}$.
    \item $\mathbb{P}\left(v\in N(B_1)\setminus \bigcup_{i\in \br{h}}S_i\right)\le 2\left(1-\frac{h(1-\alpha)}{k}\right)\alpha^2$.
\end{enumerate}
\end{lemma}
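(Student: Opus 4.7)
The plan is to handle the three bounds in order, each by first computing the probability that $v$ lands in the relevant set conditioned on $v$'s own random variable $X_v$, and then applying a union bound over the at most $h \le d^\beta$ indices $i \in \br{h}$ (and, in the third case, also over the $d$ neighbours of $v$).

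For the first bound on $\Pr(v \in W_1)$, observe that for any fixed $i \in \br{h}$ we have $d(v,S_i) \sim \text{Bin}(d,(1-\alpha)/k)$, whose expectation is at most $d/k \le \log d/(1-\epsilon)$. I would apply the first inequality of Lemma~\ref{lemma:binomial-bounds} with $b\mathbb{E}[X] = C\log d$: since $k \ge d/(10\log d)$, we get $b \ge C(1-\epsilon)$, so for $C$ large enough in terms of $\epsilon$ the probability is at most $(e/b)^{C\log d} \le d^{-101}$. A union bound over $h \le d^\beta < d$ choices of $i$ yields $d^{-100}$.

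For the second bound, the choice of $\alpha$ in Lemma~\ref{l:alpha-choice} is tailored precisely so that for each fixed $i\in\br{h}$,
\[
\Pr\bigl(d(v,S_i)\le \delta\log d\bigr) = \Pr\bigl(\text{Bin}(d,(1-\alpha)/k)\le \delta\log d\bigr) = \frac{\alpha^2}{dh}.
\]
Summing over the $h$ indices in $\br{h}$ via a union bound gives $\Pr(v\in B_1) \le \alpha^2/d$.

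The third bound is the delicate one, because we must condition on $X_v=0$ (which occurs with probability $1 - h(1-\alpha)/k$) and then estimate $\Pr(u\in B_1 \mid X_v = 0)$ for a neighbour $u$ of $v$. Conditioning removes $v$ from the pool: for any $i\in\br{h}$, $d(u,S_i)$ conditional on $X_v=0$ is distributed as $\text{Bin}(d-1,(1-\alpha)/k)$. I would compare this with the unconditional $\text{Bin}(d,(1-\alpha)/k)$ via the identity $\text{Bin}(d,p) = \text{Bin}(d-1,p) + \text{Ber}(p)$, which gives
\[
\Pr\bigl(\text{Bin}(d,p)\le \delta\log d\bigr) \ge (1-p)\Pr\bigl(\text{Bin}(d-1,p)\le \delta\log d\bigr),
\]
so, since $p = (1-\alpha)/k$ is tiny, $\Pr(\text{Bin}(d-1,p)\le \delta\log d) \le 2\Pr(\text{Bin}(d,p)\le \delta\log d) = 2\alpha^2/(dh)$. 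A union bound over the $h$ indices gives $\Pr(u\in B_1\mid X_v=0)\le 2\alpha^2/d$, and then a union bound over the $d$ neighbours $u$ of $v$ gives $\Pr(\exists u\in N(v),\,u\in B_1\mid X_v=0)\le 2\alpha^2$. Multiplying by $\Pr(X_v=0) = 1 - h(1-\alpha)/k$ produces the desired bound $2(1-h(1-\alpha)/k)\alpha^2$. The main obstacle is precisely this comparison between $\text{Bin}(d-1,p)$ and $\text{Bin}(d,p)$ together with the bookkeeping of the conditioning on $X_v=0$; everything else is routine Chernoff or a direct invocation of the choice of $\alpha$.
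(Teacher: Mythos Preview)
Your approach is essentially identical to the paper's in all three parts: a union bound over the $h\le d^\beta$ indices for items 1 and 2 (invoking Chernoff and the defining property of $\alpha$ from Lemma~\ref{l:alpha-choice}, respectively), and for item 3 the same conditioning on $X_v=0$ followed by a union bound over the $d$ neighbours and over $\br{h}$, together with a comparison of $\mathrm{Bin}(d-1,p)$ against $\mathrm{Bin}(d,p)$ at the cost of a factor $2$ (the paper does this comparison term-by-term rather than via the coupling $\mathrm{Bin}(d,p)=\mathrm{Bin}(d-1,p)+\mathrm{Ber}(p)$, but the two are equivalent). The only slip is in item 1: since $k\le (1-\epsilon)d/\log d$, one has $d/k \ge \log d/(1-\epsilon)$, not $\le$; the correct upper bound on the mean comes instead from $k\ge d/(10\log d)$, giving $\mathbb{E}[d(v,S_i)]\le 10\log d$ and hence $b\ge C/10$, which still yields $(e/b)^{C\log d}\le d^{-101}$ for $C$ large enough.
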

\begin{proof}
For every $i \in \br{h}$, we have $d(v, S_i) \sim \text{Bin}\left(d,\frac{1-\alpha}{k}\right)$. Using the assumption that $k \ge \frac{d}{10\log d}$, we have $\mathbb{E}[d(v, S_i)] \le 10 \log d$. For the first item of the lemma, by the union bound,
\begin{align*}
    \mathbb{P}\left(v \in W_1\right)\le h\cdot \mathbb{P}\left(\text{Bin}\left(d,\frac{1-\alpha}{k}\right)\ge C\log d\right)\le d^{-100},
\end{align*}
where the last inequality holds by Lemma \ref{lemma:binomial-bounds} for $C$ sufficiently large, recalling that $h\le d^{\beta}$. 

We now turn to the second item. By the union bound and by Lemma \ref{l:alpha-choice},
\begin{align*}
    \mathbb{P}\left(v\in B_1\right)\le h\cdot \mathbb{P}\left(\text{Bin}\left(d,\frac{1-\alpha}{k}\right)\le \delta\log d\right)=\frac{\alpha^2}{d}.
\end{align*}

We are thus left with the third item. Note that
\begin{align*}
    \mathbb{P}\left(v\in N(B_1)\setminus \bigcup_{i\in \br{h}}S_i\right)&=\mathbb{P}\left(v\notin \bigcup_{i\in \br{h}}S_i\right)\mathbb{P}\left(v\in N(B_1)\mid v\notin \bigcup_{i\in \br{h}}S_i\right)\\
    &=\left(1-\frac{h(1-\alpha)}{k}\right)\mathbb{P}\left(v\in N(B_1)\mid v\notin \bigcup_{i\in \br{h}}S_i\right).
\end{align*}
By the union bound 
\begin{align*}
    \mathbb{P}\left(v\in N(B_1)\mid v\notin \bigcup_{i\in \br{h}}S_i\right)&=\mathbb{P}\left(\exists u\in N(v), u\in B_1\mid v\notin \bigcup_{i\in \br{h}}S_i\right)\\
    &\le d\cdot \mathbb{P}\left(u\in B_1\mid u\in N(v), v\notin \bigcup_{i\in \br{h}}S_i\right).
\end{align*}
Once again by the union bound and by Lemma \ref{l:alpha-choice},
\begin{align*}
    \mathbb{P}\left(u\in B_1\mid u\in N(v), v\notin \bigcup_{i\in \br{h}}S_i\right)&\le h\cdot \mathbb{P}\left(\text{Bin}\left(d-1, \frac{1-\alpha}{k}\right)\le \delta \log d\right)\\
    &\le 2h\cdot \mathbb{P}\left(\text{Bin}\left(d, \frac{1-\alpha}{k}\right)\le \delta \log d\right)\\
    &=\frac{2\alpha^2}{d},
\end{align*}
where the second inequality is true by the following. For every integer $i \le \delta \log d$, set $x_i = \binom{d-1}{i} \left(\frac{1-\alpha}{k}\right)^i\left(1 - \frac{1-\alpha}{k}\right)^{d-1-i}$ and $y_i = \binom{d}{i} \left(\frac{1-\alpha}{k}\right)^i\left(1 - \frac{1-\alpha}{k}\right)^{d-i}$. Notice that
\begin{align*}
    \mathbb{P}\left(\text{Bin}\left(d-1, \frac{1-\alpha}{k}\right)\le \delta \log d\right) &= \sum_{i = 0}^{\delta \log d} x_i = \sum_{i = 0}^{\delta \log d} y_i \cdot \frac{d-i}{d\left(1-\frac{1-\alpha}{k}\right)} \\&\le 2 \sum_{i = 0}^{\delta \log d} y_i = 2 \mathbb{P}\left(\text{Bin}\left(d, \frac{1-\alpha}{k}\right)\le \delta \log d\right).
\end{align*}
Thus,
\begin{align*}
       \mathbb{P}\left(v\in N(B_1)\setminus \bigcup_{i\in \br{h}}S_i\right)\le \left(1-\frac{h(1-\alpha)}{k}\right)\cdot2\alpha^2,
\end{align*}
completing the proof.
\end{proof}

We are now ready for the first (out of several) key step in the proof.
\begin{lemma}\label{l: first LLL large degree}
With probability at least $\frac{1}{2}-o(1)$, there exist disjoint sets $A_1^{(1)},\ldots, A_h^{(1)}\subseteq V(G)$ which satisfy the following. Let $B_2=B\left(A_1^{(1)},\ldots, A_h^{(1)}\right)$ and let $W_2=W\left(A_1^{(1)},\ldots, A_h^{(1)}\right)$. Then,
\begin{enumerate}
    \item $\left|S_i\triangle A_i^{(1)}\right|\le \frac{n}{d^{50}}$ and $\left||A_i^{(1)}|-\frac{(1-\alpha)n}{k}\right|\le \frac{n}{d^{50}}$ for every $i\in \br{h}$.
    \item $W_2=\varnothing$.
    \item $\left|N(B_2)\setminus\bigcup_{i\in \br{h}} A_i^{(1)}\right|\le 3\alpha^2n.$
\end{enumerate}
\end{lemma}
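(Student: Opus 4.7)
The plan is to apply the algorithmic Lov\'asz Local Lemma (Corollary~\ref{cor: lll}) with a single family of ``too-many-neighbours'' bad events, and to layer two Chernoff-type concentration statements on top to obtain the remaining properties.

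For each $v\in V(G)$ define $F_v\coloneqq\{\exists i\in\br{h}:d(v,S_i)>C\log d\}$; by the first item of Lemma~\ref{l: probabilities of b1 and w1}, $\mathbb{P}(F_v)\le d^{-100}$. Each $F_v$ is determined by the $d+1$ variables $\{X_u:u\in\{v\}\cup N(v)\}$, so $\Delta_1=d+1$, and two events share a variable only if the corresponding vertices lie within graph distance two, giving $\Delta_2\le d^2$. Choosing $\beta\coloneqq d^{-52}$ one has $\beta(1-\beta)^{d^2}\ge d^{-52}/2\ge d^{-100}$ while $2\Delta_1|\mathcal{F}|\beta/(1-\beta)\le 4n/d^{51}$, so Corollary~\ref{cor: lll} yields, with probability at least $1/2$, a partition $A_0^{(1)},A_1^{(1)},\ldots,A_h^{(1)}$ of $V(G)$ with $|S_i\triangle A_i^{(1)}|\le 4n/d^{51}$ for every $i\in\br{h}$ and $W_2=\varnothing$. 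Since $|S_i|\sim\mathrm{Bin}(n,(1-\alpha)/k)$, Lemma~\ref{lemma:binomial-bounds} together with a union bound over the at most $h\le d^{\beta}$ indices also gives $\big||S_i|-(1-\alpha)n/k\big|\le n/d^{60}$ for every $i\in\br{h}$ with probability $1-o(1)$, and combining the two via the triangle inequality delivers Property~(1) in full and Property~(2).

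For Property~(3), the subtlety is that Lemma~\ref{l: probabilities of b1 and w1} bounds $\mathbb{P}(v\in N(B_1)\setminus\bigcup S_i)$, whereas the conclusion concerns $B_2$ and the $A_i^{(1)}$. I would set $Y\coloneqq|N(B_1)\setminus\bigcup_{i\in\br{h}}S_i|$, note that $\mathbb{E}[Y]\le 2\alpha^2 n$ by Lemma~\ref{l: probabilities of b1 and w1}, and observe that flipping a single $X_v$ alters $Y$ by at most $O(d^2)$ (only vertices at graph distance at most two from $v$ can change status); since $\alpha\ge d^{-5}$, the slack $\alpha^2n/2$ dwarfs the McDiarmid noise of order $d^2\sqrt{n\log n}$, so $Y\le 2.5\alpha^2 n$ with probability $1-o(1)$. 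Setting $R\coloneqq\bigcup_{i\in\br{h}}(S_i\triangle A_i^{(1)})$, any $u\notin N(R)$ satisfies $d(u,S_i)=d(u,A_i^{(1)})$ for every $i$, so $B_1\triangle B_2\subseteq N(R)$ and $\bigcup S_i\setminus\bigcup A_i^{(1)}\subseteq R$; therefore
\[
\left|N(B_2)\setminus\bigcup_{i\in\br{h}}A_i^{(1)}\right|\le Y+|R|+|N(N(R))|\le 2.5\alpha^2 n+O(hn/d^{48})\le 3\alpha^2 n,
\]
using $\alpha^2\ge d^{-10}$ to absorb the lower-order term. Intersecting the LLL event with the two concentration events gives total probability at least $1/2-o(1)$.

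The main obstacle I expect is precisely this transfer step for Property~(3): Lemma~\ref{l: probabilities of b1 and w1} lives on the unperturbed sets $S_i$, whereas the statement demands control on the $A_i^{(1)}$-version, so one must combine the McDiarmid-type concentration of $Y$ with the LLL closeness bound $|S_i\triangle A_i^{(1)}|\le 4n/d^{51}$ in such a way that the two-step propagation through $N(N(\cdot))$ stays dominated by the $\alpha^2 n$ scale on the right, which crucially uses the lower bound $c\ge\epsilon/4$ on the exponent of $\alpha$ coming from Lemma~\ref{l:alpha-choice}.
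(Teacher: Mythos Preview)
Your proposal is correct and follows essentially the same architecture as the paper's proof: apply Corollary~\ref{cor: lll} to the events $F_v=\{v\in W_1\}$ to obtain $W_2=\varnothing$ together with $|S_i\triangle A_i^{(1)}|\le n/d^{51}$, add a Chernoff bound on $|S_i|$ for item~(1), and then control $|N(B_1)\setminus\bigcup S_i|$ by a concentration inequality and transfer to $|N(B_2)\setminus\bigcup A_i^{(1)}|$ via the tiny symmetric difference. The only cosmetic differences are your choice of $\beta=d^{-52}$ (the paper takes $\beta=4d^{-100}$), your use of McDiarmid where the paper uses Chebyshev with the variance bound $\mathrm{Var}(Y)\le nd^4$, and a slightly more explicit $N(N(R))$ bookkeeping in the transfer step; note that in your final sentence it is the \emph{upper} bound $c\le 5$ (giving $\alpha\ge d^{-5}$, hence $\alpha^2\ge d^{-10}$) that absorbs the $O(hn/d^{48})$ term, not the lower bound $c\ge\epsilon/4$.
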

\begin{proof}
For every $v \in V(G)$, let $F_v$ be the event that $v\in W_1$. Let $\mathcal{F}\coloneqq \{F_v\}_{v\in V(G)}$. By Lemma~\ref{l: probabilities of b1 and w1}, for every $v\in V(G)$, we have $\mathbb{P}\left(F_v\right)\le d^{-100}\eqqcolon q$. Observe that every $F_v$ is determined by $\Delta_1\coloneqq d$ random variables (its neighbours). Furthermore, every $F_v$ depends on at most $\Delta_2\coloneqq d^2$ other events (revealing whether a vertex $v$ satisfies $F_v$ may only affect the probability that $u$ satisfies $F_u$ for $u$ which is in the second neighbourhood of $v$). Furthermore, note that $\beta\coloneqq 4d^{-100}$ satisfies 
\begin{align}\label{eq: beta q ineq}
    \beta(1-\beta)^{\Delta_2}&=4d^{-100}(1-4d^{-100})^{d^2}
    \ge 4d^{-100}e^{-d^{-90}}
    \ge d^{-100}=q.
\end{align}
By Corollary \ref{cor: lll}, we obtain that with probability at least $\frac{1}{2}$, there exist sets $A_1^{(1)},\ldots, A_h^{(1)}$ such that $W_2=\varnothing$ and $\left|S_i\triangle A_i^{(1)}\right|\le 2\cdot d\cdot n\cdot \frac{4d^{-100}}{1-4d^{-100}}\le \frac{n}{d^{51}}$ for every $i\in \br{h}$. Let $A_1^{(1)},\ldots, A_h^{(1)}$ be these sets (if they do not exist, set $A_i^{(1)}=S_i$ for every $i\in \br{h}$). By Lemma \ref{lemma:binomial-bounds}, \textbf{whp} $\left||S_i|-\frac{(1-\alpha)n}{k}\right|\le n^{2/3}$ for every $i\in \br{h}$, and thus we obtain the first and second items of the lemma. 

As for the third item, by Lemma \ref{l: probabilities of b1 and w1}, $$\mathbb{E}\left[\left|N(B_1)\setminus \bigcup_{i\in \br{h}}S_i\right|\right]\le 2\left(1-\frac{h(1-\alpha)}{k}\right)\alpha^2n.$$ Moreover, the event that $v\in N(B_1)\setminus \bigcup_{i\in \br{h}}S_i$ depends on at most $d^4$ other events $u\in N(B_1)\setminus \bigcup_{i\in \br{h}}S_i$. Thus, 
\begin{align*}
    \text{Var}&\left(\left|N(B_1)\setminus \bigcup_{i\in \br{h}}S_i\right|\right)=\sum_{u,v}\text{Cov}\left(v\in N(B_1)\setminus \bigcup_{i\in \br{h}}S_i, u\in N(B_1)\setminus \bigcup_{i\in \br{h}}S_i\right)\le nd^4.
\end{align*}
Thus, by Chebyshev's inequality, \textbf{whp}
\begin{align}
    \left|N(B_1)\setminus \bigcup_{i\in \br{h}}S_i\right|\le 2\left(1-\frac{h(1-\alpha)}{k}\right)\alpha^2n+n^{2/3}.\label{eq: first}    
\end{align}
Furthermore, whenever we change the location of a vertex (that is, move it from $S_i$ to $S_j$), we change the size of $B_1$ by at most $d$. Hence, 
\begin{align*}
    \big||B_2|-|B_1|\big|\le d\cdot \sum_{i\in \br{h}}\big|S_i\triangle A_i^{(1)}\big|\le d\cdot h\cdot \frac{n}{d^{51}},
\end{align*}
and thus
$
    \left|\left|N\left(B_2\right)\right|-\left|N\left(B_1\right)\right|\right|<\frac{n}{d^{40}}.
$
Together with \eqref{eq: first}, we obtain that
\begin{align*}
    \left|N(B_2)\setminus \bigcup_{i\in \br{h}}A_i^{(1)}\right|\le 3\alpha^2n,
\end{align*}
where we used that $\alpha\ge d^{-5}$.
\end{proof}

Recall that, in order to prove Proposition \ref{prop: main path}, we need to show that the sets $V_1,\ldots, V_k$ exist with positive probability (bounded away from zero). We will actually prove that such sets exist with probability $1/4-o(1)$. In particular, by Lemma \ref{l: first LLL large degree}, the sets $A_1^{(1)},\ldots, A_h^{(1)}$ (satisfying the statement of the lemma) exist with probability $1/2-o(1)$. For every possible tuple of disjoint sets $(S_1,\ldots,S_h)$, if there exists a tuple of sets $(A_1^{(1)},\ldots,A_h^{(1)})$, satisfying the conclusion of Lemma~\ref{l: first LLL large degree}, we fix such a tuple. Otherwise, we let $A_i^{(1)}=S_i$ for all $i\in \br{h}$. Further in this section, we assume that the event from Lemma~\ref{l: first LLL large degree}, that has probability at least $1/2-o(1)$, actually occurs; we call the tuple $(S_1,\ldots,S_h)$ \emph{nice} in this case.
Note that, under this assumption, the sets $A_i^{(1)}$ satisfy that no vertex has more than $C\log d$ neighbours in each one of them.

We turn to show that with probability bounded away from zero, there exist sets $A_1^{(2)},\ldots,A_h^{(2)}$ that are `not far' from $S_1,\ldots, S_h$, and every vertex has between $\delta\log d$ and $2C\log d$ neighbours in each one of $A_1^{(2)},\ldots, A_h^{(2)}$. Let $U$ be a set of size $\frac{\alpha n}{1000}$ which contains $N(B_2)\setminus \bigcup_{i\in \br{h}}A_i^{(1)}$ (note that by Lemma \ref{l: first LLL large degree}, $\left|N(B_2)\setminus \bigcup_{i\in \br{h}}A_i^{(1)}\right|\le 3\alpha^2n<\frac{\alpha n}{1000}$). We will make use of the following lemma.
\begin{lemma}\label{l: sprinkling probability}
There exist $\frac{900}{k}\le p_1,\ldots, p_h\le \frac{1100}{k}$ such that, for every $i\in \br{h}$, 
\begin{align*}
    |A_i^{(1)}|+p_i|U|=\frac{n}{k}.
\end{align*}
\end{lemma}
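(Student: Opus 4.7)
The plan is to simply define $p_i$ explicitly as the unique solution of the equation $|A_i^{(1)}| + p_i |U| = n/k$, namely
\[
p_i \coloneqq \frac{n/k - |A_i^{(1)}|}{|U|},
\]
and then verify that this value lies in the claimed interval $[900/k, 1100/k]$. The equation is automatically satisfied by construction, so the only content is the two-sided bound on $p_i$.

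To bound $p_i$, I would substitute $|U| = \alpha n/1000$ and use the estimate $||A_i^{(1)}| - (1-\alpha)n/k| \le n/d^{50}$ from Lemma~\ref{l: first LLL large degree}. Writing $|A_i^{(1)}| = (1-\alpha)n/k + \eta_i$ with $|\eta_i| \le n/d^{50}$, the numerator becomes $n/k - (1-\alpha)n/k - \eta_i = \alpha n/k - \eta_i$, and hence
\[
p_i = \frac{1000}{k} - \frac{1000 \,\eta_i}{\alpha n}.
\]
The error term is bounded in absolute value by $\frac{1000}{\alpha d^{50}}$. Recalling from Lemma~\ref{l:alpha-choice} that $\alpha \ge d^{-5}$, this is at most $1000/d^{45}$, which is $o(1/k)$ for large $d$ since $k < d$. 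Therefore $p_i = 1000/k + o(1/k) \in [900/k, 1100/k]$ for $d$ large enough.

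There is essentially no obstacle here; the lemma is a bookkeeping statement that the quantities set up in the previous lemma are well-behaved enough to permit the planned second resampling stage. The only thing to be mindful of is using the correct lower bound $\alpha \ge d^{-\epsilon/4}$ versus $\alpha \ge d^{-5}$ from Lemma~\ref{l:alpha-choice} — the upper bound $\alpha \le d^{-\epsilon/4}$ on $\alpha$ is not needed, and it is the lower bound on $\alpha$ together with the strong error term $n/d^{50}$ that makes the correction negligible compared to $1/k$.
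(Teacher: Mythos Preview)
Your proposal is correct and follows essentially the same approach as the paper: define $p_i$ by the equation, plug in $|U|=\alpha n/1000$ and the bound $\big||A_i^{(1)}|-(1-\alpha)n/k\big|\le n/d^{50}$, and conclude that $p_i=1000/k\pm 1000/(\alpha d^{50})\in[900/k,1100/k]$ using $\alpha\ge d^{-5}$. Your closing remark is slightly garbled (it is $\alpha\le d^{-\epsilon/4}$ that is the upper bound and $\alpha\ge d^{-5}$ the lower bound), but you apply the right inequality in the argument.
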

\begin{proof}
Recall that for every $i\in \br{h}$, $|A_i^{(1)}|\in \left[\frac{(1-\alpha)n}{k}-\frac{n}{d^{50}},\frac{(1-\alpha)n}{k}+\frac{n}{d^{50}}\right]$. Thus, $p_i$ should satisfy
\[
    p_i |U| = \frac{n}{k} - |A_i^{(1)}| \in \left[\frac{\alpha n}{k} - \frac{n}{d^{50}}, \frac{\alpha n}{k} + \frac{n}{d^{50}}\right].
\]
Plugging $|U| = \frac{\alpha n}{1000}$ yields
\[
    p_i \in \left[\frac{1000}{k} - \frac{1000}{\alpha d^{50}}, \frac{1000}{k} + \frac{1000}{\alpha d^{50}}\right] \subseteq \left[\frac{900}{k}, \frac{1100}{k}\right].
\]
\end{proof}

Let $p_1,\ldots, p_h$ be the probabilities from the statement of Lemma \ref{l: sprinkling probability}. Let us note that $\sum_{i\in \br{h}}p_i\le \frac{1100h}{k}<1$, since $h< d^\beta$ and $k\ge \frac{d}{10\log d}$. Further, for every $v\in U$ we define a random variable $X_v$ such that $\mathbb{P}\left(X_v=i\right)=p_i$ for every $i\in \br{h}$. Moreover, for every $i\in \br{h}$, we set $U_i\coloneqq\left\{v\in U\colon X_v=i\right\}$, and let $\tilde{A}_i^{(1)}\coloneqq A_i^{(1)}\cup U_i$. Let $W_3$ be the set of vertices $v\in V(G)$ such that there exists $i\in \br{h}$ for which $d(v,\tilde{A}_i^{(1)})\notin (\delta\log d, 2C\log d)$. 

Let us first bound the probability that $v\in W_3$. Recall that we have already conditioned on the existence of the sets $A_1^{(1)}, \ldots, A_h^{(1)}$ satisfying the properties as in the statement of Lemma~\ref{l: first LLL large degree}. Further, note that the probability measure in the following lemma is induced by the random variables $\{X_v\}_{v \in U}$ given in the previous paragraph.
\begin{lemma}\label{l: W_2 probability}
$\mathbb{P}\left(v\in W_3\right)\le \frac{1}{d^{100}}$ for every $v\in V(G)$.
\end{lemma}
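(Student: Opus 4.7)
The plan is to bound $\Pr(v\in W_3)$ by a union bound over $i\in\br{h}$ and over the upper and lower tail events defining $W_3$. Having conditioned on the sets $(A_1^{(1)},\ldots,A_h^{(1)},U)$, which are fixed before the sampling of the variables $\{X_u\}_{u\in U}$, the only remaining randomness is in $\{U_i\}_{i\in\br{h}}$. Since $U$ is disjoint from each $A_j^{(1)}$, we may split
\[
 d(v,\tilde A_i^{(1)}) \;=\; d(v,A_i^{(1)}) + d(v,U_i),
\]
where the first summand is deterministic and the second is a sum of independent Bernoulli$(p_i)$ indicators over $u\in N(v)\cap U$; that is, $d(v,U_i)\sim \operatorname{Bin}(d(v,U),p_i)$.

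For the upper tail, the hypothesis $W_2=\varnothing$ gives $d(v,A_i^{(1)})\le C\log d$, so it suffices to show $\Pr(d(v,U_i)\ge C\log d)\le d^{-101}/h$. Using $d(v,U)\le d$, $p_i\le 1100/k$ and $k\ge d/(10\log d)$, we obtain $\mathbb{E}[d(v,U_i)]\le 11000\log d$. Plugging $\ell=C\log d$ into Lemma~\ref{lemma:binomial-bounds}(1) yields $(11000e/C)^{C\log d}$, which is at most $d^{-102}$ once $C=C(\epsilon)$ is chosen sufficiently large.

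The lower tail is the subtler step. The key observation is that whenever $d(v,\tilde A_i^{(1)})\le\delta\log d$, we must also have $d(v,A_i^{(1)})\le\delta\log d$, and so by definition $v\in B_2$. For such a $v$, the inclusion $N(B_2)\setminus\bigcup_{j\in\br{h}}A_j^{(1)}\subseteq U$ (built into the very definition of $U$) forces $N(v)\subseteq U\cup\bigcup_{j\in\br{h}}A_j^{(1)}$, and hence
\[
 d(v,U) \;\ge\; d - \sum_{j\in\br{h}} d(v,A_j^{(1)}) \;\ge\; d - h\cdot C\log d \;\ge\; d/2,
\]
using $h\le d^\beta$ and $\beta$ small. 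Consequently $\mathbb{E}[d(v,U_i)]\ge (d/2)(900/k)\ge 450\log d$, since $d/k\ge\log d$. Lemma~\ref{l: binom lower tail} applied with $t=\log d$ and $\xi=150$ (and $\delta$ sufficiently small, as permitted in the statement of Proposition~\ref{prop: main path}) then yields $\Pr(d(v,U_i)\le\delta\log d)\le e^{-101\log d}=d^{-101}$. A union bound over $i\in\br{h}$ combined with the upper tail contribution gives $\Pr(v\in W_3)\le d^{-100}$.

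The hardest part of the argument, and the one where the construction of $U$ is genuinely used, is the lower tail. For a generic $v\in V(G)$ the intersection $N(v)\cap U$ could be empty, in which case $d(v,U_i)\equiv 0$ and no Chernoff-type bound could rescue us. The structural escape is the dichotomy above: any $v$ that might produce a bad lower event already lies in $B_2$, and for such $v$ the defining property of $U$ forces almost every neighbour of $v$ to sit in $U$, which makes $\mathbb{E}[d(v,U_i)]$ linear in $d/k\ge\log d$. Once this is in place the remaining estimates are routine applications of the Chernoff and Lemma~\ref{l: binom lower tail} bounds.
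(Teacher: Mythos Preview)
Your proof is correct and follows essentially the same approach as the paper: the same split $d(v,\tilde A_i^{(1)})=d(v,A_i^{(1)})+d(v,U_i)$, the same upper-tail bound via $W_2=\varnothing$ and a Chernoff estimate, and the same crucial dichotomy for the lower tail (if the lower event occurs then $v\in B_2$, whence $N(v)\setminus\bigcup_j A_j^{(1)}\subseteq U$ and $d(v,U)\ge d-hC\log d$). The only cosmetic difference is that you invoke Lemma~\ref{l: binom lower tail} for the lower tail whereas the paper uses Lemma~\ref{lemma:binomial-bounds}; since $\mathbb{E}[d(v,U_i)]\ge 450\log d\gg\delta\log d$, either bound comfortably gives $d^{-101}$.
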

\begin{proof}
We begin with the probability that there exists $i\in \br{h}$ for which $d(v,\tilde{A}_i^{(1)})\ge 2C\log d$. Recall that $d(v, A_i^{(1)})\le C\log d$ for every $i\in \br{h}$. Thus, if there is $i\in \br{h}$ for which $d(v,\tilde{A}_i^{(1)})\ge 2C\log d$, we must have $d(v, U_i)\ge C\log d$. For every $i \in \br{h}$, we have $d(v, U_i) \sim \text{Bin}\left(d(v, U), p_i\right)$. By Lemma \ref{l: sprinkling probability} and by the bound $d(v, U) \le d$,
\begin{align*}
    \mathbb{P}\left(\exists i\in \br{h}, d(v, U_i)\ge C\log d\right)\le h\cdot \mathbb{P}\left(\text{Bin}\left(d,\frac{1100}{k}\right)\ge C\log d\right)<d^{-101},
\end{align*}
where the last inequality is true by Lemma \ref{lemma:binomial-bounds} for $C>0$ large enough.

As for the probability that there exists $i\in \br{h}$ for which $d(v,\tilde{A}_i^{(1)})\le \delta \log d$, note that for every $v \in V(G)$ and $i \in \br{h}$, we have $A_i^{(1)}\subseteq \tilde{A}_i^{(1)}$ and thus $d(v, \tilde{A}_i^{(1)}) \ge d(v, A_i^{(1)})$. Hence, if $v \notin B_2$, then $d(v, \tilde{A}_i^{(1)}) > \delta \log d$ for every $i \in \br{h}$. Further, note that if $v \in B_2$, then since $N(B_2)\setminus \bigcup_{i\in \br{h}} A_i^{(1)}\subseteq U$, we have that $N(v)\setminus \bigcup_{i\in \br{h}} A_i^{(1)}\subseteq U$. Hence,
\begin{align}
    \left|N(v) \cap U\right| = \left|N(v) \setminus \bigcup_{i \in \br{h}} A_i^{(1)}\right| = d - \sum_{i \in \br{h}} d(v, A_i^{(1)}) \ge d - h \cdot C \log d \ge 0.99d. \label{eq: degree y}
\end{align}
Thus, by the union bound and by Lemma \ref{lemma:binomial-bounds}, we have that
\begin{align*}
    \mathbb{P}\left(\exists i \in \br{h}, d(v,\tilde{A}_i^{(1)})\le \delta\log d\right)&\le h\cdot \mathbb{P}(d(v,U_1)\le \delta \log d)\\&\le h\cdot \Pr\left(\text{Bin}\left(0.99d, \frac{900}{k}\right) \le \delta \log d\right) \le d^{-101},
\end{align*}
completing the proof.
\end{proof}

We can now apply Corollary \ref{cor: lll} and obtain the required sets.
\begin{lemma}\label{l: second lll}
With probability at least $1/2-o(1)$, there exist disjoint subsets $A_1^{(2)},\ldots, A_h^{(2)}\subseteq V(G)$ which satisfy the following.
\begin{enumerate}
    \item $\left|S_i\triangle A_i^{(2)}\right|=o_d(n/k)$ for every $i\in \br{h}$.
    \item $\left|A_i^{(2)}-\frac{n}{k}\right|=O(n/d^{50})$ for every $i\in \br{h}$. 
    \item $d(v, A_i^{(2)})\in \left[\delta \log d, 2C\log d\right]$ for every $i\in \br{h}$ and $v\in V(G)$.
\end{enumerate}
\end{lemma}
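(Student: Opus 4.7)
The plan is to apply Corollary~\ref{cor: lll} in a manner entirely analogous to Lemma~\ref{l: first LLL large degree}, but now with the random variables $\{X_v\}_{v \in U}$ as the source of randomness and with the bad events indexed by all vertices of $G$. For each $v \in V(G)$ I would define $F_v$ to be the event $\{v \in W_3\}$ and let $\mathcal{F} = \{F_v : v \in V(G)\}$. Lemma~\ref{l: W_2 probability} supplies the uniform bound $q := d^{-100}$ on $\mathbb{P}(F_v)$. Each $F_v$ is determined by the $\Delta_1 := d$ variables $\{X_u : u \in N(v) \cap U\}$ and depends on at most $\Delta_2 := d^2$ other events (those $F_u$ with $u$ at graph-distance at most two from $v$). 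Taking $\beta := 4 d^{-100}$, the computation in~\eqref{eq: beta q ineq} gives $q \le \beta(1-\beta)^{\Delta_2}$, so Corollary~\ref{cor: lll} applies and yields, with probability at least $1/2$, a partition $U_0', U_1', \ldots, U_h'$ of $U$ (where index $0$ corresponds to the ``unassigned'' value of $X_v$) such that no $F_v$ holds and $|U_i \triangle U_i'| \le 2\Delta_1 |\mathcal{F}| \beta/(1-\beta) \le 16n/d^{99}$ for every $i$. I then set $A_i^{(2)} := A_i^{(1)} \cup U_i'$ for $i \in \br{h}$; these are disjoint, and Property~3 is immediate since no $F_v$ is satisfied.

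For Properties~1 and~2 I would invoke standard concentration. Since $|U_i|$ is a sum of independent Bernoulli variables with mean $p_i |U| = \Theta(\alpha n/k)$, Lemma~\ref{lemma:binomial-bounds} yields $\big||U_i| - p_i|U|\big| \le n^{2/3}$ whp. Combined with the LLL bound, this gives $|U_i'| \le p_i|U| + n^{2/3} + 16n/d^{99} = O(\alpha n/k)$, which is $o_d(n/k)$ since $\alpha = d^{-c}$ with $c \ge \epsilon/4$. Hence
\[
    |S_i \triangle A_i^{(2)}| \le |S_i \triangle A_i^{(1)}| + |U_i'| \le n/d^{50} + o_d(n/k) = o_d(n/k),
\]
establishing Property~1. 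For Property~2, Lemma~\ref{l: sprinkling probability} gives $|A_i^{(1)}| + p_i|U| = n/k$ exactly, so
\[
    \big||A_i^{(2)}| - n/k\big| = \big||U_i'| - p_i|U|\big| \le 16n/d^{99} + n^{2/3} = O(n/d^{50}).
\]
Intersecting the LLL event (probability $\ge 1/2$) with the concentration event (probability $1-o(1)$) yields the claimed probability of $1/2 - o(1)$.

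The proof is essentially a routine adaptation of the LLL argument used for Lemma~\ref{l: first LLL large degree}. The only delicate point is that the calibration of $|U| = \alpha n/1000$ and of the probabilities $p_i$ supplied by Lemma~\ref{l: sprinkling probability} must be tight enough so that $|U_i|$ is simultaneously small relative to $n/k$ (for Property~1, which needs $\alpha \to 0$ with $d$) and concentrated to within $O(n/d^{50})$ of its mean $p_i|U|$ (for Property~2, which follows from a Chernoff-type bound since $n$ is sufficiently large relative to the fixed $d$). Both of these are already built into the preceding construction, so no further obstacle arises.
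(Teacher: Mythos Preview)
Your proposal is correct and follows essentially the same approach as the paper's proof: both apply Corollary~\ref{cor: lll} to the events $F_v=\{v\in W_3\}$ with the bound from Lemma~\ref{l: W_2 probability}, the same parameters $\Delta_1=d$, $\Delta_2=d^2$, $\beta=4d^{-100}$, and then combine the resulting $O(n/d^{99})$ symmetric-difference bound with Chernoff concentration of $|U_i|$ around $p_i|U|$ to deduce Properties~1 and~2. Your use of the exact identity $|A_i^{(1)}|+p_i|U|=n/k$ from Lemma~\ref{l: sprinkling probability} for Property~2 is in fact slightly cleaner than the paper's bookkeeping, but the argument is the same.
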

\begin{proof}
Similarly to the proof of Lemma \ref{l: first LLL large degree}, for every $v\in V(G)$, let $F_v$ be the event that $v\in W_3$. Let $\mathcal{F}\coloneqq \{F_v\}_{v\in V(G)}$. By Lemma \ref{l: W_2 probability}, we have that $\mathbb{P}\left(F_v\right)\le d^{-100}\eqqcolon q$ for every $v\in V(G)$. Observe that every $F_v$ is determined by $\Delta_1\coloneqq d$ random variables. Furthermore, every $F_v$ depends on at most $\Delta_2\coloneqq d^2$ other events. Setting $\beta = 4 d^{-100}$, we have that $\beta(1-\beta)^{\Delta_2} \ge q$ by \eqref{eq: beta q ineq}. Thus, 
by Corollary \ref{cor: lll}, we obtain that with probability at least $\frac{1}{2}$, there exist sets $A_1^{(2)},\ldots, A_h^{(2)}$ such that $d(v, A_i^{(2)})\in \left[\delta \log d, 2C\log d\right]$ for every $i\in \br{h}$ and $v\in V(G)$, obtaining the third item.  Let $A_1^{(2)},\ldots, A_h^{(2)}$ be these sets (if they do not exist, set $A_i^{(2)}=\tilde{A}_i^{(1)}$ for every $i\in \br{h}$).

For the first item, again by Corollary \ref{cor: lll} for every $i\in \br{h}$, 
\begin{align}
    \left|\tilde{A}_i^{(1)}\triangle A_i^{(2)}\right|\le 2\cdot d\cdot n\cdot \frac{4d^{-100}}{1-4d^{-100}}\le \frac{n}{d^{51}}. \label{eq: A2 vs tilde A}    
\end{align}
Recall that $\tilde{A}_i^{(1)}= A_i^{(1)}\cup U_i$. By Lemma \ref{l: first LLL large degree}, $\left|S_i\triangle A_i^{(1)}\right|=o_d(n/k)$, and by Lemma \ref{lemma:binomial-bounds}, \textbf{whp} $|U_i|=o_d(n/k)$ and thus $|\tilde{A}_i^{(1)}\triangle S_i|=o_d(n/k)$.

For the second item, we recall that 
\begin{align}
    \left||A_i^{(1)}|-\frac{(1-\alpha)n}{k}\right|\le \frac{n}{d^{50}} \label{eq: ai vs (1-alpha)n/k}    
\end{align}
for every $i\in \br{h}$. By construction, $|\tilde{A}_i^{(1)}|=|A_i^{(1)}|+|U_i|$. Now, for every $i\in \br{h}$, $$|U_i|\sim \text{Bin}\left(\frac{\alpha n}{1000},p_i\right),$$ where $p_i$ is defined according to Lemma \ref{l: sprinkling probability}. In particular, $p_i$ satisfies that $\left|\mathbb{E}[|U_i|]-\frac{\alpha n}{k}\right|=O(n/d^{50})$. By Lemma \ref{lemma:binomial-bounds}, we have that \textbf{whp} $$|U_i|\in \left[\mathbb{E}[|U_i|]-n^{2/3},\mathbb{E}[|U_i|]+n^{2/3}\right].$$ 
This, together with \eqref{eq: A2 vs tilde A} and \eqref{eq: ai vs (1-alpha)n/k} yields that with probability at least $\frac{1}{2}-o(1)$, we have that $\left||A_i^{(2)}| - \frac{n}{k}\right| = O\left(n/d^{50}\right)$. 
\end{proof}

\subsection{Vertices of low degree in the tree}\label{s: low degree}

We have proved that, if $(S_1,\ldots,S_h)$ is nice (this happens with probability at least $1/2-o(1)$), then there exist sets $A_1^{(2)},\ldots, A_h^{(2)}$ satisfying the properties as in the statement of Lemma \ref{l: second lll}.
Throughout this section, we fix a nice tuple $(S_1,\ldots,S_h)$ and a tuple $(A_1^{(2)},\ldots,A_h^{(2)})$, satisfying the conclusion of Lemma \ref{l: second lll}.


Recall that the sets $A_1^{(2)},\ldots, A_h^{(2)}$ correspond to the high-degree vertices in $T$, and every vertex has between $\delta\log d$ and $2C\log d$ neighbours in each of these sets. As described in the beginning of Section \ref{s: proposition}, in this section we aim to establish similar sets for low-degree vertices.

Let 
\begin{equation}
    U\coloneqq V(G)\setminus \bigcup_{i \in \br{h}} A_i^{(2)}.
    \label{eq:U-def}
\end{equation}
    Note that, by Lemma \ref{l: second lll},
\begin{align}
  |U|&\in \left[n-h\cdot\left(\frac{n}{k}+O\left(\frac{n}{d^{50}}\right)\right), n-h\cdot\left(\frac{n}{k}-O\left(\frac{n}{d^{50}}\right)\right)\right]\nonumber\\
  &=\left[\left(1-\frac{h}{k}-O(d^{-49})\right)n, \left(1-\frac{h}{k}+O(d^{-49})\right)n\right]. \label{eq: size y}
\end{align}
For every $v \in U$, let $X_v$ be the random variable such that $\mathbb{P}\left(X_v=i\right)=\frac{1}{k-h}$, for every index $i\in\{h+1,\ldots,k\}$. All $X_v$ are independent. For every $i\in\{h+1,\ldots,k\}$, set 
\[
    S_i \coloneqq \{v \in U \colon X_v = i\}.
\]
For convenience, let us also set $A_i^{(2)}\coloneqq S_i$ for every $i\in \{h+1,\ldots,k\}$ (recall that $A_i^{(2)}$ is already defined for every $i\in \br{h}$).

Given a partition $U_1,\ldots, U_k$ of $V(G)$, let $B(U_1,\ldots, U_k)$ be the set of vertices $v\in V(G)$ satisfying the following. There exist $i\in \br{k}$ and $j\in \{h+1,\ldots, k\}$ such that $\{i,j\}\in E(T), v\in U_i$ and $d(v,U_j)\le \delta \log d$. Further, let $W(U_1,\ldots, U_k)$ be the set of vertices $v\in V(G)$ that satisfy at least one of the following.
\begin{enumerate}
    \item There exists $i \in \br{k}$ such that $d(v, U_i )\ge 2C \log d$.
    \item There exist more than $1 / \epsilon^2$ indices $i \in \{h+1,\ldots,k\}$ such that $d(v, U_i) < \delta \log d$.
    \item There exists $i \in \br{k}$ such that $d\left(v, U_i \cap B(U_1,\ldots,U_k)\right) > \log \log d$.    
\end{enumerate}
Set $B_4\coloneqq B(A_1^{(2)},\ldots, A_k^{(2)})$ and $W_4\coloneqq W(A_1^{(2)},\ldots, A_k^{(2)})$.

Let us first show that it is quite unlikely for a vertex to be in $W_4$.
\begin{lemma}\label{l: prob w0}
    $\Pr(v \in W_4) \le d^{-100}$ for every vertex $v \in V(G)$.
\end{lemma}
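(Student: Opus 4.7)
The event $\{v \in W_4\}$ is a disjunction of the three conditions in the definition of $W(\cdot)$, so the plan is to bound the probability of each separately and conclude via a union bound. Recall that the sets $A_1^{(2)}, \ldots, A_h^{(2)}$ are already fixed (satisfying Lemma~\ref{l: second lll}), while $A_{h+1}^{(2)}, \ldots, A_k^{(2)}$ are determined by the independent uniform assignments $\{X_u\}_{u \in U}$ into $\{h+1, \ldots, k\}$. In particular, each load $d(v, A_i^{(2)})$ with $i > h$ is distributed as $\mathrm{Bin}(d(v,U), 1/(k-h))$. Since $\sum_{i \in [h]} d(v, A_i^{(2)}) \le 2Ch\log d = o(d)$ by Lemma~\ref{l: second lll}, we will have $d(v,U) = (1-o(1))d$, and using $k-h \le (1-\epsilon)d/\log d$ the expectation of each such load will be at least $(1+\epsilon)\log d$.

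For Condition~1, it suffices to consider $i > h$, since Lemma~\ref{l: second lll} already gives $d(v, A_i^{(2)}) \le 2C\log d$ for $i \in [h]$. The upper-tail Chernoff bound in Lemma~\ref{lemma:binomial-bounds}(1) with $C$ sufficiently large yields probability at most $d^{-102}$ per $i$, and a union bound over at most $d$ indices gives $d^{-101}$. For Condition~2, the plan is to fix a set $I$ of $r \coloneqq \epsilon^{-2}$ indices from $\{h+1,\ldots,k\}$ and bound the joint probability that $d(v, A_i^{(2)}) < \delta\log d$ for every $i \in I$ by the probability that $\sum_{i \in I} d(v, A_i^{(2)}) < r\delta\log d$. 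Since this sum is $\mathrm{Bin}(d(v,U), r/(k-h))$ with expectation at least $(1+\epsilon)r\log d$, Lemma~\ref{l: binom lower tail} (applied with $t = r\log d$) will give a bound $d^{-(1+2\epsilon/3)r}$, and a union bound over at most $\binom{k}{r} \le d^r$ choices of $I$ leaves $d^{-2\epsilon r/3}$, which is at most $d^{-101}$ provided the constant implicit in ``$\epsilon^{-2}$'' is taken sufficiently large in terms of $\epsilon$.

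Condition~3 is the main obstacle, since the number of neighbours of $v$ in $A_i^{(2)} \cap B_4$ is itself a function of a large collection of correlated random assignments. My plan is to first bound $\Pr(u \in B_4)$ uniformly in $u$: if $u \in A_{i'}^{(2)}$, then $u \in B_4$ forces $d(u, A_j^{(2)}) \le \delta\log d$ for some neighbour $j > h$ of $i'$ in $T$, so the single-index binomial lower tail combined with a union bound over the at most $k$ such neighbours gives $\Pr(u \in B_4) \le d^{-\epsilon/2}$. The more delicate step is decoupling $\{u \in B_4\}$ across distinct $u \in N(v)$; for this I plan to invoke the girth condition built into $\mathcal{G}_d$, which forces $N(u_1) \cap N(u_2) \subseteq \{v\}$ for any two distinct $u_1, u_2 \in N(v)$. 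After conditioning on the membership of $v$, the events $\{u_1 \in B_4\}$ and $\{u_2 \in B_4\}$ therefore depend on disjoint subsets of the variables $\{X_w\}$ and are independent. Assuming Condition~1 fails, $|N(v) \cap A_i^{(2)}| \le 2C\log d$, so a binomial-type bound yields
\[
    \Pr\bigl(d(v, A_i^{(2)} \cap B_4) > \log\log d\bigr) \le \binom{2C\log d}{\log\log d}\bigl(d^{-\epsilon/2}\bigr)^{\log\log d} \le d^{-\epsilon(\log\log d)/3},
\]
which is at most $d^{-101}$ for $d$ sufficiently large in terms of $\epsilon$. A final union bound over $i \in [k]$ combined with Conditions~1 and~2 then yields the claimed $d^{-100}$ bound.
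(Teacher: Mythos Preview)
Your plan is essentially correct and follows the same three-case decomposition as the paper. Two points are worth flagging.

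First, your girth claim is slightly stronger than what $\mathcal{G}_d$ provides. The definition only forbids \emph{two} short cycles within distance $10$ of each other, so a single $3$- or $4$-cycle through $v$ is allowed; thus one pair $u_1,u_2\in N(v)$ may share a neighbour besides $v$. The paper handles this by deleting at most two vertices from the $2$-ball of $v$ and replacing the threshold $\log\log d$ by $\log\log d-2$. Your decoupling argument needs the same adjustment.

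Second, for Condition~3 with $i>h$, the paper takes a slightly cleaner route than you do: rather than conditioning on ``Condition~1 fails'' to get $|N(v)\cap A_i^{(2)}|\le 2C\log d$, it bounds $\Pr(u\in A_i^{(2)}\cap B_4)\le d^{-1-\epsilon/2}$ directly for each of the $d$ neighbours $u$ of $v$ and then controls $\mathrm{Bin}(d,d^{-1-\epsilon/2})$. Your route is fine too, but to make it rigorous you must condition on the full assignment $\{X_u:u\in N(v)\cap U\}$ (not just on $X_v$), since the set $N(v)\cap A_i^{(2)}$ is random; only after that conditioning do the events $\{u\in B_4\}$ for $u$ in this set have the claimed product structure with the $d^{-\epsilon/2}$ bound. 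The paper's approach for $i\in[h]$ matches yours exactly, since there $|N(v)\cap A_i^{(2)}|\le 2C\log d$ holds deterministically.

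Finally, your remark about ``the constant implicit in $\epsilon^{-2}$'' is a bit garbled: there is no such constant, and what you actually need (as the paper also assumes) is simply that $\epsilon$ is small enough that $2/(3\epsilon)\ge 200$, which one may take without loss of generality.
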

\begin{proof}
    Fix $v \in V(G)$. Since $h \le d^{\beta}$ and $d(v, A_i^{(2)}) \le 2C \log d$, for every $i \in \br{h}$, we have 
    \begin{align}
           d(v, U) \ge d - h \cdot 2C \log d \ge (1 - 0.5\epsilon)d. \label{eq: d(v,U)}
    \end{align}

    For the first item, by the union bound, the probability that there exists $i \in \{h+1,\ldots,k\}$ such that $d(v, A_i^{(2)}) > 2C \log d$ is at most 
    \begin{align*}
        k \cdot \Pr\left(\text{Bin}\left(d(v, U), \frac{1}{k - h}\right) \ge 2C \log d\right)  \le k\cdot \Pr\left(\text{Bin}\left(d,\frac{1}{k-h}\right)\ge 2C\log d\right)< d^{-200},
    \end{align*}
    where the last inequality is true whenever $C$ is large enough by Lemma \ref{lemma:binomial-bounds}.

    For the second item, note that the event that there exist more than $1 / \epsilon^2$ indices $i \in \{h+1,\ldots,k\}$ such that $d(v, A_i^{(2)}) < \delta \log d$ implies that there exist $1 / \epsilon^2$ indices $i_1, \dots, i_{1/\epsilon^2} \in \{h+1,\ldots,k\}$ such that $d\left(v, \bigcup_{j=1}^{1/\epsilon^2} A_{i_j}^{(2)}\right) < \frac{1}{\epsilon^2} \cdot \delta \log d$. Thus, by the union bound over all choices of these indices, this probability is at most $d^{1/\epsilon^2} \Pr\left(\text{Bin}\left(d(v, U), \frac{1}{\epsilon^2(k-h)}\right) < \delta \cdot \frac{\log d}{\epsilon^2}\right).$
    The expectation of this binomial random variable is 
    \begin{align*}
        d(v, U) \cdot \frac{1}{\epsilon^2(k-h)} \ge (1 - 0.5\epsilon)d \cdot \frac{\log d}{\epsilon^2 (1-\epsilon) d} \ge (1+0.5\epsilon) \cdot \frac{\log d}{\epsilon^2}, 
    \end{align*}
    where we used~\eqref{eq: d(v,U)} 
     and $k \le (1-\epsilon)\frac{d}{\log d}$. Thus, for $\delta$ small enough, by Lemma~\ref{l: binom lower tail}
    \begin{align*}
        d^{1/\epsilon^2} \cdot \Pr\left(\text{Bin}\left(d(v, U), \frac{1}{\epsilon^2(k-h)}\right) < \delta \cdot \frac{\log d}{\epsilon^2}\right)
        &\le d^{1/\epsilon^2}\cdot e^{-(1+\epsilon/3)\log d/\epsilon^2} \le d^{-200},
    \end{align*}
    where the last inequality is true whenever $\epsilon$ is small enough.

        For the third item, we fix $i \in \br{k}$ and bound the probability that $d(v, A_i^{(2)} \cap B_4)> \log \log d$ from above. If $i \in \br{h}$, then $d(v, A_i^{(2)}) \le 2C \log d$ and every neighbour $u$ of $v$ in $A_i^{(2)}$ belongs to $B_4$ with probability at most
    \begin{equation}
        (k-h) \cdot \Pr\left(\text{Bin}\left(d(u, U), \frac{1}{k-h}\right) \le \delta \log d\right) \le d^{-\epsilon/2},
    \label{eq:d_u_U}
    \end{equation}
    where the last inequality is true for $\delta$ sufficiently small by Lemma \ref{l: binom lower tail} and \eqref{eq: d(v,U)}. The events that $u_1 \in B_4$ and $u_2 \in B_4$ for two different vertices in $A_i^{(2)} \cap N(v)$ might be dependent if their neighbourhoods intersect. Since $G \in \mathcal{G}_d$, removing at most two vertices from the 2-ball around $v$ ensures that there are no more cycles in the second neighbourhood of $v$. However, this is still not enough to get rid of dependencies since $u_1,u_2$ have the common neighbour~$v$. Nevertheless, after revealing the sets $A_i^{(2)}$ that the vertices of $N(v)$ belong to and deleting at most two vertices of $N(v)$ that belong to a cycle, we can upper bound the events $\{u\in B_4\}$, for all $u\in N(v)\cap A_i^{(2)}$, by independent events that do not consider the vertex $v$. In other words, we bound the probability that $u\in B_4$ from above by $\mathbb{P}\left(\text{Bin}\left(d(u, U)-1, \frac{1}{k-h}\right) \le \delta \log d\right)$, which satisfies the inequality~\eqref{eq:d_u_U} as well. Thus, we obtain that the probability that there are more than $\log \log d$ vertices in $d(v, A_i^{(2)} \cap B_4)$ is at most
    \[
        \Pr\left(\text{Bin}\left(2C \log d, d^{-\epsilon/2}\right) \ge \log \log d - 2\right) \le d^{-\Theta(\log \log d)} \le d^{-200},
    \]
    where the first inequality is true by Lemma \ref{lemma:binomial-bounds}. 
    
    If $i \in \{h+1,\ldots,k\}$, then the probability that a neighbour of $v$ belongs to $A_i^{(2)} \cap B_4$ equals
    \begin{align}
        \Pr\left(u \in A_i^{(2)} \cap B_4 \mid u \in N(v)\right) &= \frac{1}{k - h} \cdot \Pr\left(u \in B_4 \mid u \in N(v) \cap A_i^{(2)}\right)\notag \\
        &\le \frac{1}{k - h} \cdot (k-h) \cdot \Pr\left(\text{Bin}\left(d, \frac{1}{k - h}\right) < \delta \log d\right) \le d^{-1 - \epsilon/2},   
    \label{eq:u_conditional}
    \end{align}
    where 
     the last inequality is true by Lemma \ref{l: binom lower tail} for $\delta$ sufficiently small and since $k-h< (1-\epsilon)d/\log d$. Similar to the previous argument (we again delete a possible cycle crossing $N(v)$, and then bound the events that $u\in B_4$ from above by events that do not consider the vertex $v$ in the neighbourhood of $u$, that is the binomial random variable $\text{Bin}\left(d, \frac{1}{k - h}\right)$ in~\eqref{eq:u_conditional} is replaced by $\text{Bin}\left(d-1, \frac{1}{k - h}\right)$), since $G \in \mathcal{G}_d$, the probability that there are more than $\log \log d$ vertices in $N(v, A_i^{(2)} \cap B_4)$ is at most
    \[
        \Pr\left(\text{Bin}\left(d, d^{-1 - \epsilon/2}\right) \ge \log \log d - 2\right) \le d^{-\Theta(\log \log d)} \le d^{-200},
    \]
    where the first inequality is true by Lemma \ref{lemma:binomial-bounds}. 
\end{proof}

We also require the following lemma.
\begin{lemma}\label{l: typical tilde a}
\textbf{Whp} the following holds.
    \begin{enumerate}
        \item $|B_4| \le n d^{-(1 + \epsilon/4)}$.
        \item For every $i\in \br{k}$, we have $\left||A_{i}^{(2)}|-\frac{n}{k}\right|=O(n/d^{49})$.
        \item For every $i\in \br{k}$, there are at least $\frac{n}{3k}$ vertices $v\in A_i^{(2)}$ such that for every $j\in \br{k}$, $d(v,A_j^{(2)})\ge \delta\log d$.  
    \end{enumerate}
\end{lemma}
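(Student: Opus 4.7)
The plan is to establish the three items in turn, working under the fixed choice of a nice tuple $(S_1,\ldots,S_h)$ together with $(A_1^{(2)},\ldots,A_h^{(2)})$ satisfying Lemma~\ref{l: second lll}; the only randomness is over the independent labels $\{X_v\}_{v\in U}$ defined at the start of Section~\ref{s: low degree}. Item 2 for $i\in[h]$ is immediate from Lemma~\ref{l: second lll}. For $i\in\{h+1,\ldots,k\}$ one has $|A_i^{(2)}|=|S_i|\sim\mathrm{Bin}(|U|,1/(k-h))$, and~\eqref{eq: size y} gives $\mathbb{E}[|S_i|]=|U|/(k-h)=n/k+O(n/d^{49})$. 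A Chernoff bound (Lemma~\ref{lemma:binomial-bounds}) yields $||S_i|-\mathbb{E}[|S_i|]|\le n^{2/3}$, and a union bound over the at most $k<d$ indices (where $d$ is a constant in $n$) completes item 2.

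For item 1, the first step is the key one-vertex estimate: for every $v\in V(G)$, every $j\in\{h+1,\ldots,k\}$, and every $i\in[k]$ such that either $i\in[h]$ and $v\in A_i^{(2)}$, or $i>h$ and $v\in U$, one has
\[
\mathbb{P}\!\left(d(v,A_j^{(2)})\le\delta\log d\ \middle|\ v\in A_i^{(2)}\right)\le d^{-(1+2\epsilon/9)}.
\]
This follows from Lemma~\ref{l: binom lower tail} with $\xi=\epsilon/3$, using that (i) conditional on $v\in A_i^{(2)}$, $d(v,A_j^{(2)})$ is $\mathrm{Bin}(d(v,U),1/(k-h))$-distributed by the independence of $\{X_u\}_{u\ne v}$; (ii) $d(v,U)\ge(1-0.5\epsilon)d$ by~\eqref{eq: d(v,U)}; and (iii) $(k-h)\le(1-\epsilon)d/\log d$. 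Summing over the at most $k-1$ edges $\{i,j\}\in E(T)$ with $j>h$ and over all vertices $v$, and using $|A_i^{(2)}|,|U|/(k-h)=(1+o(1))n/k$ from item 2, I obtain $\mathbb{E}[|B_4|]\le(1+o(1))nd^{-(1+2\epsilon/9)}\le \tfrac12 nd^{-(1+\epsilon/4)}$ for $d$ large.

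The main obstacle is concentration, since we need a whp bound rather than a bound in expectation. The indicator $\mathbf{1}[v\in B_4]$ is determined by the labels $\{X_w\}_{w\in(N(v)\cup\{v\})\cap U}$, so two such indicators are correlated only if $v,v'$ are at graph distance at most $2$. Since $G$ is $d$-regular with $d$ fixed, the dependency graph has maximum degree $O(d^2)$, hence $\mathrm{Var}(|B_4|)=O(nd^2)$. Chebyshev's inequality then gives $|B_4|\le \mathbb{E}[|B_4|]+\tfrac12 nd^{-(1+\epsilon/4)}\le nd^{-(1+\epsilon/4)}$ with error probability $O(d^{4+\epsilon/2}/n)=o(1)$, which proves item 1.

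For item 3, fix $i\in[k]$ and let $L_i$ be the set of $v\in A_i^{(2)}$ with $d(v,A_j^{(2)})<\delta\log d$ for some $j\in[k]$. By Lemma~\ref{l: second lll}, only indices $j\in\{h+1,\ldots,k\}$ can contribute, and a union bound of the key one-vertex estimate gives
\[
\mathbb{P}(v\in L_i\mid v\in A_i^{(2)})\le(k-h)\cdot d^{-(1+2\epsilon/9)}\le d^{-2\epsilon/9}/\log d=o_d(1).
\]
Hence $\mathbb{E}[|L_i|]=o_d(n/k)$. The same dependency-graph argument as above yields $\mathrm{Var}(|L_i|)=O(nd^2)$, so Chebyshev gives $|L_i|\le n/(4k)$ whp, and combining with item 2 produces $|A_i^{(2)}|-|L_i|\ge n/k-O(n/d^{49})-n/(4k)\ge n/(3k)$. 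A union bound over the $k<d$ values of $i$ completes the proof.
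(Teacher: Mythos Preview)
Your approach is essentially the same as the paper's: compute $\mathbb{E}[|B_4|]$ via the one-vertex tail bound from Lemma~\ref{l: binom lower tail}, then upgrade to a \textbf{whp} statement by Chebyshev using the $O(d^2)$-degree dependency structure; items~2 and~3 are handled identically.

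There is, however, a numerical slip in item~1 that invalidates the argument as written. You apply Lemma~\ref{l: binom lower tail} with $\xi=\epsilon/3$, which correctly yields the one-vertex bound $d^{-(1+2\epsilon/9)}$, and then assert
\[
(1+o(1))\,n\,d^{-(1+2\epsilon/9)}\;\le\;\tfrac12\,n\,d^{-(1+\epsilon/4)}.
\]
But $2/9<1/4$, so this inequality goes the wrong way for large $d$. The fix is immediate: since
\[
\frac{d(v,U)}{k-h}\;\ge\;\frac{(1-0.5\epsilon)d}{(1-\epsilon)d/\log d}\;=\;\frac{1-0.5\epsilon}{1-\epsilon}\,\log d\;\ge\;(1+\epsilon/2)\log d,
\]
you may take $\xi=\epsilon/2$ instead, giving the bound $d^{-(1+\epsilon/3)}$; now $\epsilon/3>\epsilon/4$ and the comparison holds. (Your leading constant $(1+o(1))$ is also slightly optimistic---summing over ordered pairs $(i,j)$ with $\{i,j\}\in E(T)$ and $j>h$ contributes $\sum_{j>h}d_T(j)\le 2(k-1)$ rather than $k-1$, so the bound is $2n\,d^{-(1+\epsilon/3)}$---but this is absorbed once the exponent is repaired.) Item~3 is unaffected, since there you only need the conditional probability to be $o_d(1)$, and $d^{-2\epsilon/9}/\log d$ already suffices.
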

\begin{proof}
    We start with the first item. Fix $v \in V(G)$. We consider two cases separately. Let us first assume $v\notin U$. In this case, we bound the event that $v \in B_4$ by the event that there exists $i \in \{h+1, \ldots, k\}$ such that $d(v, A_i^{(2)}) \le \delta \log d$. By Lemma \ref{l: binom lower tail}, for $\delta$ sufficiently small and by~\eqref{eq: d(v,U)},
    \begin{align}
        \Pr\left(v \in B_4 \mid v \notin U\right) \le (k-h) \cdot \Pr\left(\text{Bin}\left(d(v, U), \frac{1}{k-h}\right) < \delta \log d\right) \le d^{-\epsilon/2}. \label{eq: b4 not u}
    \end{align}
    Thus, by \eqref{eq: size y} and for $\beta$ sufficiently small with respect to $\epsilon$ (recalling that $h\le d^{\beta})$, 
    \begin{align*}
        \mathbb{E}[B_4 \setminus U] \le \left(\frac{hn}{k} + O\left(\frac{n}{d^{49}}\right)\right) \cdot d^{-\epsilon/2} \le n d^{-1 - \epsilon/3}.
    \end{align*}
    Now, if $v\in U$, once again by Lemma \ref{l: binom lower tail}, 
    \begin{align*}
        \Pr\left(v \in B_4 \mid v \in U\right) &\le \sum_{i = h+1}^{k} \Pr(v \in A_i^{(2)}\mid v\in U) \sum_{j \in N_T(i) \setminus \br{h}} \Pr\left(d(v, A_j^{(2)}) < \delta \log d \mid v \in A_i^{(2)}\right)
        \\&= \sum_{i = h+1}^{k} \Pr(v \in A_i^{(2)}\mid v\in U) \sum_{j \in N_T(i) \setminus \br{h}} \Pr\left(\text{Bin}\left(d(v, U), \frac{1}{k-h}\right) < \delta \log d\right) \\
        &\stackrel{\eqref{eq: d(v,U)}}\le \sum_{i = h+1}^{k} \frac{1}{k - h} \sum_{j \in N_T(i) \setminus \br{h}} d^{-1 - \epsilon/2} \le 2e(T) \cdot \frac{1}{k-h} \cdot d^{-1 - \epsilon/2} \le d^{-1 - \epsilon/3}.
    \end{align*}
    Hence, we got $\mathbb{E}[|B_4|] \le 2n d^{-1-\epsilon/3}$. Since $G\in \mathcal{G}_d$, we have by the same arguments verbatim as those in the proof of Lemma \ref{l: first LLL large degree} that $\text{Var}(|B_4|) = O(n)$. Thus, by Chebyshev's inequality, \textbf{whp} $|B_4|\le nd^{-(1+\epsilon/4)}$.

    The second item holds by Lemma \ref{l: second lll} for $i\in \br{h}$. For $i\in \{h+1,\ldots,k\}$, we have that $|A_i^{(2)}|$ stochastically dominates $\text{Bin}\left(\left(1-\frac{h}{k}-O(d^{-49})\right)n,\frac{1}{k-h}\right)$, and $|A_i^{(2)}|$ is stochastically dominated by $\text{Bin}\left(\left(1-\frac{h}{k}+O(d^{-49})\right)n,\frac{1}{k-h}\right)$, and thus the second item follows from a simple application of Lemma \ref{lemma:binomial-bounds}. 

   For the third item, given $i\in \br{k}$, denote by $R_i$ the set of vertices $v\in A_i^{(2)}$ such that for every $j\in \br{k}$, $d(v,A_j^{(2)})\ge \delta\log d$. Note that for every $j\in \br{h}$ and $v\in V(G)$, we have that $d(v, A_j^{(2)})\ge \delta\log d$ deterministically. For every $j\in \{h+1,\ldots, k\}$, the probability that $v\in V(G)$ satisfies $d(v,A_j^{(2)})<\delta\log d$ is at most $\mathbb{P}\left(\text{Bin}\left(d(v,U),\frac{1}{k-h}\right)<\delta\log d\right)\le d^{-1-\epsilon/2}$ by Lemma~\ref{l: binom lower tail} together with \eqref{eq: d(v,U)}. Thus, by~\eqref{eq: b4 not u}, $\mathbb{E}[|R_i|]\ge \frac{n}{2k}$. For every $v$, the event that $v\in R_i$ depends on at most $d^4$ other events. Thus, $\text{Var}(|R_i|)=O(n)$, and by Chevyshev's inequality, \textbf{whp} for every $i\in \br{k}$ we have that $|R_i|\ge \frac{n}{3k}$.
\end{proof}

We now turn to use Corollary \ref{cor: lll} to show that there exists a `good' partition $A_1^{(3)},\ldots, A_k^{(3)}$, which is not far from $S_1,\ldots, S_k$.
\begin{lemma}\label{l: third lll}
With probability at least $1/2-o(1)$, there exists a partition of $V(G)$ into $A_1^{(3)},\ldots, A_k^{(3)}$ which satisfies the following. Let 
$$
B_5=B\left(A_1^{(3)},\ldots, A_k^{(3)}\right)
\quad\text{ and }\quad
W_5=W\left(A_1^{(3)},\ldots, A_k^{(3)}\right).
$$
Then,
\begin{enumerate}
    \item $\left|A_i^{(3)}\triangle S_i\right|=o_d(n/k)$ for every $i\in \br{k}$.\label{l: item third lll}
    \item $\left||A_i^{(3)}|-\frac{n}{k}\right|=O(n/d^{49})$ for every $i\in \br{k}$.
    \item $W_5=\varnothing$.
    \item $|B_5|\le nd^{-1-\epsilon/5}$.
    \item For every $i\in \br{k}$, there are at least $\frac{n}{4k}$ vertices $v \in A_i^{(3)}$ which satisfy that for every $j\in \br{k}$, $d(v,A_j^{(3)})\ge \delta\log d$. 
    \item $d(v, A_i^{(3)})\in \left[\delta \log d, 2C\log d\right]$ for every $i\in \br{h}$ and $v\in V(G)$.
\end{enumerate}
\end{lemma}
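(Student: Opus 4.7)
The plan is to apply the algorithmic Lov\'asz Local Lemma (Corollary~\ref{cor: lll}) to the independent random variables $(X_v)_{v\in U}$ defined at the beginning of Section~\ref{s: low degree}, with the bad events $\mathcal{F}=\{F_v\}_{v\in V(G)}$, where $F_v$ is the event $\{v\in W_4\}$. Lemma~\ref{l: prob w0} gives $\Pr(F_v)\le q:=d^{-100}$ for every $v$. Because the third clause in the definition of $W_4$ involves the set $B_4$, whose membership depends on the sets containing the neighbours of $v$'s neighbours, each $F_v$ is determined by the random variables $X_u$ with $u$ in the second neighbourhood of $v$ intersected with $U$; this gives $\Delta_1\le d^2$ and $\Delta_2\le d^4$.

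Taking $\beta:=2d^{-100}$, the inequality $\beta(1-\beta)^{\Delta_2}\ge q$ holds by an estimate analogous to~\eqref{eq: beta q ineq}. Corollary~\ref{cor: lll} therefore produces, with probability at least $1/2$ over $X$, a partition $(U_{h+1},\dots,U_k)$ of $U$ such that no $F_v$ is satisfied and $|S_i\triangle U_i|\le 2\Delta_1|\mathcal F|\beta/(1-\beta)=O(nd^{-98})$ for each $i>h$. Setting $A_i^{(3)}:=A_i^{(2)}$ for $i\in[h]$ and $A_i^{(3)}:=U_i$ for $i>h$, items~3 and~6 are immediate: item~3 since no $F_v$ holds and hence $W_5=\varnothing$, and item~6 since $A_i^{(3)}=A_i^{(2)}$ for $i\in[h]$ so that Lemma~\ref{l: second lll}(3) applies. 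Items~1 and~2 follow by combining the bound $|A_i^{(3)}\triangle A_i^{(2)}|=O(nd^{-98})=o_d(n/k)$ with the corresponding bounds from Lemma~\ref{l: second lll}(1)--(2) for $i\in[h]$ and Lemma~\ref{l: typical tilde a}(2) for $i>h$.

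For items~4 and~5, let $M\subseteq V(G)$ denote the set of vertices whose set changed; then $|M|=O(nd^{-98})$. Since changing the set of a single vertex flips the $B_4$-membership of at most $d+1$ vertices, $|B_5\triangle B_4|=O(d|M|)=O(nd^{-97})$, which combined with Lemma~\ref{l: typical tilde a}(1) yields $|B_5|\le nd^{-1-\epsilon/4}+O(nd^{-97})\le nd^{-1-\epsilon/5}$ for large $d$, proving item~4. For item~5, letting $R_i$ denote the set from Lemma~\ref{l: typical tilde a}(3), a vertex of $R_i\cap A_i^{(3)}$ retains the required property unless it or a neighbour lies in $M$, so the desired set has size at least $|R_i|-(d+1)|M|\ge n/(3k)-O(nd^{-97})\ge n/(4k)$. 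Combined with the $1-o(1)$ probability of the typical events in Lemma~\ref{l: typical tilde a}, the overall success probability is $1/2-o(1)$. The principal technical point is that the $B_4$-term inflates the dependency radius from one to two, so $\Delta_1,\Delta_2$ become $d^2,d^4$ instead of the $d,d^2$ seen in Section~\ref{s: high deg}; the bound $q=d^{-100}$ from Lemma~\ref{l: prob w0} is, however, comfortably small enough to absorb this loss and keep the perturbations far below $n/k$.
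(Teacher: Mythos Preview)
Your proof is correct and follows essentially the same approach as the paper's: both apply Corollary~\ref{cor: lll} to the bad events $F_v=\{v\in W_4\}$ with $q=d^{-100}$, $\Delta_1\approx d^2$, $\Delta_2\approx d^4$ and $\beta=\Theta(d^{-100})$, keep $A_i^{(3)}=A_i^{(2)}$ for $i\in\br{h}$, and then read off items~1--6 from the small-perturbation bound together with Lemmas~\ref{l: second lll} and~\ref{l: typical tilde a}. The only differences are cosmetic: the paper takes $\beta=4d^{-100}$ and $\Delta_1=1+d^2$, and bounds $|B_5|$ via $d\sum_i|A_i^{(3)}\triangle A_i^{(2)}|$ rather than your (slightly sharper) $(d+1)|M|$.
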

\begin{proof}
For every $v\in V(G)$, let $F_v$ be the event that $v\in W_5$. Let $\mathcal{F}\coloneqq \{F_v\}_{v\in V}$. By Lemma~\ref{l: prob w0}, we have that for every $v\in V(G)$, $\mathbb{P}\left(F_v\right)\le d^{-100}\eqqcolon q$. Observe that every $F_v$ is determined by at most $\Delta_1\coloneqq 1+d^2$ random variables. Furthermore, every $F_v$ depends on at most $\Delta_2\coloneqq d^4$ other events. Moreover, in the same way as in \eqref{eq: beta q ineq}, $\beta\coloneqq 4d^{-100}$ satisfies $\beta(1-\beta)^{\Delta_2}\ge q$. Thus, by Corollary \ref{cor: lll}, we obtain that with probability at least $\frac{1}{2}$, there exist sets $A_1^{(3)},\ldots, A_k^{(3)}$ such that $W_5=\varnothing$ and 
$$
\left|A_i^{(3)}\triangle A_i^{(2)}\right|\le 2\cdot (1+d^2)\cdot n\cdot \frac{4d^{-100}}{1-4d^{-100}}\le \frac{n}{d^{51}}
$$ 
for every $i\in \br{k}$. Let $A_1^{(3)},\ldots, A_k^{(3)}$ be these sets (if they do not exist, set $A_i^{(3)}=A_i^{(2)}$ for every $i\in \br{k}$). This, together with Lemmas \ref{l: second lll} and \ref{l: typical tilde a}, implies the first three properties. 

We now move to the fourth property. By Lemma \ref{l: typical tilde a}, we have that \textbf{whp} $|B_4|\le nd^{-1-\epsilon/4}$. By construction, \textbf{whp} $|B_5|\le |B_4|+d\sum_{i\in \br{k}}\left|A_i^{(3)}\triangle A_i^{(2)}\right|$. Thus, by the above, we have $|B_5|\le nd^{-1-\epsilon/5}$ \textbf{whp}. 

Similarly, for the fifth property, by Lemma \ref{l: typical tilde a}, we have that \textbf{whp} for every $i\in \br{k}$, there are at least $\frac{n}{3k}$ vertices $v \in A_i^{(2)}$ such that $d(v,A_j^{(2)})\ge \delta\log d$ for every $j\in \br{k}$. Since, by the above, we have moved only $O\left(\frac{n}{d^{50}}\right)$ vertices, we have that \textbf{whp}, for every $i \in \br{k}$, there are at least $\frac{n}{4k}$ vertices $v \in A_i^{(3)}$ such that $d(v,A_j^{(3)})\ge \delta\log d$ for every $j\in \br{k}$. 

For the last property, we simply note that for every $i\in \br{h}$, $A_i^{(3)}=A_i^{(2)}$ and thus this follows from Lemma \ref{l: second lll}.
\end{proof}

\subsection{Eliminating bad vertices}\label{s: bad vertices}

We now say that a tuple $(S_1,\ldots,S_k)$ is \emph{nice} if $(S_1,\ldots,S_h)$ is nice and there exist sets $A_i^{(3)}$, $i\in \br{k}$, satisfying the conclusion of Lemma~\ref{l: third lll}. Due to Lemmas~\ref{l: first LLL large degree} and~\ref{l: third lll}, with probability at least $1/4-o(1)$, the considered random tuple of sets $(S_1,\ldots,S_k)$ is nice. As in the previous section, for every nice tuple $(S_1,\ldots,S_k)$, we fix the corresponding sets $A_i^{(3)}$, $i\in \br{k}$, satisfying the conclusion of Lemma~\ref{l: third lll}. If $(S_1,\ldots,S_k)$ is not nice, then we simply set $A_i^{(3)}=S_i$ for all $i\in \br{k}$.

Further in this section, we assume that the event from Lemma~\ref{l: third lll} (that has probability at least $1/4-o(1)$ due to Lemma~\ref{l: first LLL large degree}), actually occurs; i.e. the tuple $(S_1,\ldots,S_k)$ is \emph{nice}. 
 We recall that $A_1^{(3)},\ldots, A_k^{(3)}$ satisfy that for every vertex $v \in V(G)$ and an index $i \in \br{h}$, we have $d(v, A_i^{(3)}) \in [\delta \log d, 2C \log d]$. Further, for every vertex $v \in V(G)$ and an index $i\in \br{k}$, we have $d(v, A_i^{(3)}) \le 2C \log d$. We now show that after several resamples, we may obtain sets $A_1^{(4)}, \ldots, A_k^{(4)}$ such that for every $\{i, j\} \in E(T)$ and for every $v \in A_i^{(4)}$, the number of neighbours of $v$ in $A_j^{(4)}$ is concentrated around its expectation. More precisely, 
\begin{lemma}\label{l: no b}
There exist sets $A_1^{(4)}, \dots, A_k^{(4)}$ such that the following holds:
    \begin{enumerate}
        \item $\left|A_i^{(4)}\triangle A_i^{(3)}\right|\le nd^{-1-\epsilon/5}$ for every $i\in \br{k}$.\label{l: no b item}
        \item $d(v, A_j^{(4)}) > \frac{\delta \log d}{2}$ for every $\{i,j\} \in E(T)$ and for every $v \in A_i^{(4)}$.
        \item $d(v, A_j^{(4)}) < 3C \log d$ for every vertex $v \in V(G)$ and every $j \in \br{k}$.
        \item For every $i\in \br{k}$, there are at least $\frac{n}{5k}$ vertices $v\in A_i^{(4)}$ which satisfy that for every $j\in \br{k}$, $d(v,A_j^{(4)})\ge \frac{\delta\log d}{2}$. 
    \end{enumerate}
\end{lemma}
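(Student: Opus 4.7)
The plan is to repair the vertices of $B_5$ by reassigning each to a new set while leaving every other vertex in place. Since $|B_5|\le nd^{-1-\epsilon/5}$, only a tiny fraction of vertices move, which makes Item~\ref{l: no b item} immediate ($|A_i^{(4)}\triangle A_i^{(3)}|\le |B_5|$) and preserves the bulk of the structure secured in Lemma~\ref{l: third lll}. For each $v\in B_5$ with $v\in A_{i_v}^{(3)}$, the bad indices
$$
\mathrm{Bad}_v \coloneqq \{j\in\br{k} : d(v, A_j^{(3)})<\delta\log d\}
$$
are contained in $\{h+1,\ldots,k\}$ by Lemma~\ref{l: second lll}, and $|\mathrm{Bad}_v|\le 1/\epsilon^2$ by item~2 of the definition of $W$ applied with $W_5=\varnothing$. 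Call $i'\in\br{k}\setminus\{i_v\}$ a \emph{good destination} for $v$ if $N_T(i')\cap \mathrm{Bad}_v=\varnothing$, and let $D_v$ be the set of good destinations. Since every $j\in \mathrm{Bad}_v$ has $\deg_T(j)<d^{1-\beta}$, the bad destinations number at most $d^{1-\beta}/\epsilon^2=o(k)$, so $|D_v|\ge k(1-o(1))>0$.

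For each $v\in B_5$ I pick $i'_v$ uniformly at random from $D_v$, independently across $v$, and define $A_i^{(4)}$ by moving each $v\in B_5$ from $A_{i_v}^{(3)}$ to $A_{i'_v}^{(4)}$. For any $u\in V(G)$ and $j\in\br{k}$,
$$
d(u, A_j^{(4)}) - d(u, A_j^{(3)}) = |N(u)\cap\{v\in B_5 : i'_v=j\}| - |N(u)\cap A_j^{(3)}\cap B_5|,
$$
and the second correction is always at most $\log\log d$ by item~3 of $W$ applied with $W_5=\varnothing$. To control the first term, I apply the Lov\'asz Local Lemma (Theorem~\ref{th: LLL}) to the events $F_{u,j}\coloneqq\{|N(u)\cap\{v\in B_5 : i'_v=j\}|>\log d\}$. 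Using $|N(u)\cap A_i^{(3)}\cap B_5|\le \log\log d$ for every $i$, the indicator sum in $F_{u,j}$ has expectation at most $|N(u)\cap B_5|/(k(1-o(1)))=O(\log\log d)$, so Chernoff (Lemma~\ref{lemma:binomial-bounds}) yields $\mathbb{P}(F_{u,j})\le d^{-\omega(1)}$. Since $F_{u,j}$ is determined by $\{i'_v : v\in N(u)\cap B_5\}$, it depends on at most $O(d^3\log\log d)$ other such events, so the LLL conditions hold with ample slack and produce a choice of $\{i'_v\}$ for which the random increase is at most $\log d$ uniformly over all $(u,j)$.

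With this choice in hand, the remaining items follow directly. For item~2, given $v\in A_i^{(4)}$ and $\{i,j\}\in E(T)$: if $v\notin B_5$, then $v\in A_i^{(3)}\setminus B_5$ gives $d(v,A_j^{(3)})\ge \delta\log d$ from the definition of $B_5$ (and from Lemma~\ref{l: second lll} when $j\in\br{h}$); if $v\in B_5$, then $i=i'_v$ and $j\in N_T(i'_v)$ avoids $\mathrm{Bad}_v$, again giving $d(v,A_j^{(3)})\ge \delta\log d$. In either case the decrease of at most $\log\log d$ leaves $d(v,A_j^{(4)})>\delta\log d/2$. For item~3, $d(v,A_j^{(3)})<2C\log d$ by item~1 of $W$ with $W_5=\varnothing$, and the random increase of at most $\log d$ keeps the total strictly below $3C\log d$. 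For item~4, the at least $n/(4k)$ good vertices of $A_i^{(3)}$ from Lemma~\ref{l: third lll}(5) automatically lie outside $B_5$ and thus remain in $A_i^{(4)}$; their degrees into each $A_j^{(4)}$ drop by at most $\log\log d$, preserving the $\delta\log d/2$ threshold. The principal obstacle is the LLL step: although $B_5$ is globally tiny, $|N(u)\cap B_5|$ can still be of order $d$, so a deterministic reassignment could concentrate the entire contribution into a single target index and spoil item~3. Randomizing the destinations spreads this mass evenly and LLL then enforces the per-coordinate bound uniformly across all pairs $(u,j)$.
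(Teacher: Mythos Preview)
Your proof is correct and follows essentially the same strategy as the paper: randomly reassign each $v\in B_5$ to a destination whose $T$-neighbourhood avoids $\mathrm{Bad}_v$, verify items~2 and~4 deterministically from this construction, and invoke the Local Lemma to bound the per-index inflow of $B_5$-vertices for item~3. The only cosmetic differences are that the paper restricts destinations to $\{h+1,\ldots,k\}$, phrases the LLL events per vertex rather than per pair $(u,j)$, uses the cruder bound $d(v,B_5)\le d$ in place of your sharper $|N(u)\cap B_5|\le k\log\log d$, and for item~4 subtracts off $|B_5|\le nd^{-1-\epsilon/5}$ from the $n/(4k)$ count rather than asserting that good vertices automatically avoid $B_5$.
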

\begin{proof}
    For every vertex $v \in B_5$, denote by $I_v \subseteq \{h+1,\ldots,k\}$ the set of indices $j \in \{h+1,\ldots,k\}$ such that $d(v, A_j^{(3)}) \le \delta \log d$. Set $\Gamma_v \coloneqq \{h+1,\ldots,k\} \setminus (I_v \cup N_T(I_v))$. Since $h \le d^{\beta}$ and since for every $i \in \{h+1,\ldots,k\}$ we have $d_T(i) \le d^{1-\beta}$, and recalling that $W_5=\varnothing$,
    \begin{align}
        |\Gamma_v| \ge k - h - \epsilon^{-2} \cdot d^{1-\beta} = (1 - o_d(1)) k.\label{eq: gamma v}
    \end{align}
    For every vertex $v \in B_5$, denote by $X_v$ the uniform random variable over the set of integers $\Gamma_v$. For every $i \in \br{k}$, set
    \[
        \tilde{A}_i^{(3)} \coloneqq (A_i^{(3)} \setminus B_5) \cup \{v \in B_5 \colon X_v = i\}.
    \]
    
    Fix $i\in \br{k}$ and fix $v\in \tilde{A}_i^{(3)}$. Note that for any $j\in \br{k}$,
    \[
        d(v, \tilde{A}_j^{(3)}) \ge d(v, A_j^{(3)}) - d(v, B_5 \cap A_j^{(3)})\ge d(v,A_j^{(3)})-\log\log d,
    \]
    where the second inequality is true by the third item in the definition of $W_5$. We have that at least $\frac{n}{4k}$ vertices in $A_i^{(3)}$ have at least $\delta\log d$ neighbours in $A_j^{(3)}$ for every $j\in \br{k}$. This, together with the above and the bound on $|B_5|$ in Lemma~\ref{l: third lll}, immediately implies the fourth item.
    
    For the second item, fix $\{i,j\} \in E(T)$ and fix a vertex $v \in \tilde{A}_i^{(3)}$. If $v \notin B_5$ or $j \in \br{h}$, then we have $d(v, A_j^{(3)}) > \delta \log d$ and thus
    \[
        d(v, \tilde{A}_j^{(3)}) \ge d(v, A_j^{(3)}) - d(v, B_5 \cap A_j^{(3)}) \ge \delta \log d - \log \log d>\frac{\delta \log d}{2}.
    \]
    Assume now that $v \in B_5$ and $j \in \{h+1,\ldots,k\}$. In this case, we must have that $i \in \Gamma_v$. Hence, for every $\ell \in \br{k}$ such that $\{i, \ell\} \in E(T)$, we have $d(v, A_{\ell}^{(3)}) > \delta \log d$. In particular, $\{i,j\}\in E(T)$. Therefore, we also have
    \[
        d(v, \tilde{A}_j^{(3)}) \ge d(v, A_j^{(3)}) - d(v, B_5 \cap A_j^{(3)}) \ge \delta \log d - \log \log d>\frac{\delta \log d}{2}.
    \]
    Note that the above holds deterministically (that is, independently of the values of all $X_v$). 

    Next, we show that for a fixed vertex $v \in V(G)$, the probability that there exists an index $i \in \br{k}$ such that $d(v, \tilde{A}_i^{(3)}) \ge 3C \log d$ is at most $d^{-199}$. Fix a vertex $v \in V(G)$ and fix an index $i \in \br{k}$. We have $d(v, A_i^{(3)}) < 2C \log d$. Thus, if $d(v, \tilde{A}_i^{(3)}) \ge 3C \log d$, then $d(v,B_5\cap \tilde{A}_i^{(3)})\ge C\log d$. Notice that for every vertex $u \in B_5$, by \eqref{eq: gamma v} we have that $|\Gamma_u| \ge 0.5 k$ and thus the probability that $X_u = i$ (which implies that $u \in \tilde{A}_i^{(3)}$) is at most $2/k$. Therefore,
    \[
        \Pr\left(d(v, B_5 \cap \tilde{A}_i^{(3)}) \ge C \log d\right) \le \Pr\left(\text{Bin}\left(d, 2/k\right) \ge C \log d\right) \le d^{-200},
    \]
    where the last inequality is true whenever $C$ is sufficiently large. Hence, by the union bound over all indices $i \in \br{k}$, we have that the probability that the third item fails for a vertex $v \in V(G)$ is at most $k \cdot d^{-200} \le d^{-199}$.
    For every vertex $v \in V(G)$, denote by $D_v$ the event that the third item in the statement fails for $v$. By the above, $\Pr(D_v) \le d^{-199}$. Moreover, the event $D_v$ is independent on all but at most $d^4$ other events $D_u$. Therefore, by Corollary \ref{cor: lll}, with probability at least $\frac{1}{2}$ there exist sets $A_1^{(4)},\ldots, A_k^{(4)}$ such that none of the events $D_v$ holds. Fixing these sets, since $|B_5|\le nd^{-1-\epsilon/5}$, we have that $|A_i^{(4)}\triangle A_i^{(3)}|\le |B_5|\le nd^{-1-\epsilon/5}$ for every $i\in \br{k}$, completing the proof. 
\end{proof}

\subsection{Balancing the sets}\label{s: equal size} 

We have proved that, if $(S_1,\ldots,S_k)$ is nice (this happens with probability at least $1/4-o(1)$), then there exist sets $A_1^{(4)},\ldots, A_k^{(4)}$ satisfying the properties as in the statement of Lemma \ref{l: no b}.
Throughout this section, we fix a nice tuple $(S_1,\ldots,S_k)$ and a tuple $(A_1^{(4)},\ldots,A_k^{(4)})$, satisfying the conclusion of Lemma \ref{l: no b}.

Recall that the sets $A_1^{(4)},\ldots, A_k^{(4)}$ have good degree distribution in between them, yet their size could be up to $nd^{-1-\epsilon/5}$-far from $n/k$. We now turn to show that there exist sets $A_1^{(5)},\ldots, A_k^{(5)}$, all `close' to $A_1^{(4)},\ldots, A_k^{(4)}$, and all of size $\frac{n}{k}\pm O(nd^{-50})$, which still satisfy the `good degrees' assumption. This will, in turn, allow us to complete the balancing of the sets deterministically, and obtain sets of size exactly $\frac{n}{k}$ which satisfy the `good degrees' assumption. 

To that end, let us reorder the sets such that $A_1^{(4)},\ldots, A_m^{(4)}$ are of size at least $\frac{n}{k}$, and $A_{m+1}^{(4)},\ldots,A_k^{(4)}$ are of size less than $\frac{n}{k}$, for some $m\in \br{k}$. Further, for every $i\in \br{k}$, let $\Delta_i=\left||A_i^{(4)}|-\frac{n}{k}\right|$, noting that by the first item in Lemma \ref{l: no b} and by the second item in Lemma~\ref{l: third lll}, 
\begin{equation}
\Delta_i\le nd^{-1-\epsilon/6}.
\label{eq:Delta_i_upper_bound}
\end{equation} 
We have that for every $i\in \br{k}$, there are at least $\frac{n}{5k}$ vertices $v\in A_i^{(4)}$ such that $d(v,A_j^{(4)})\in \left[\frac{\delta\log d}{2}, 3C\log d\right]$ for every $j\in \br{k}$. For every $i\in \br{m}$, let $Q_i\subseteq A_i(4)$ be a set of exactly $\frac{n}{5k}$ such vertices, and set $Q\coloneqq\bigcup_{i\in \br{m}}Q_i$. 

For every $i \in \br{m}$ and $v \in Q_i$, set $M_v \sim \text{Bernoulli}(p_i)$ where $p_i = \frac{\Delta_i}{n/5k}$. This Bernoulli random variable represents whether the vertex $v$ is moved to the $j$-th set, for some $j \in \{m+1,\ldots, k\}$, or not. In addition, let $Z_v$ be the random variable over the set $\{m+1,\ldots,k\}$ defined by $\Pr(Z_v = j) = \frac{\Delta_j}{\Delta_1 + \ldots  + \Delta_m}$ for every $j \in \{m+1,\ldots, k\}$. Note that $\sum_{j\in\{m+1,\ldots,k\}}\Pr(Z_v=j)=1$ since $\Delta_1+\ldots +  \Delta_m=\Delta_{m+1}+\ldots+\Delta_k$. The random variable $Z_v$ represents the index $j \in \{m+1,\ldots,k\}$ for which the vertex $v$ may move (it will indeed move to $A_j^{(4)}$ if and only if $M_v=1$). We stress that for every $v\in Q$, $M_v$ and $Z_v$ are independent, and are also independent over different $v$. Let
\begin{align*}
    \tilde{A}_i^{(4)} \coloneqq \begin{cases}
        A_i^{(4)} \setminus \{v \in Q_i \colon M_v = 1\}, \quad i \in \br{m} \\
        A_i^{(4)} \cup \{v \in Q \colon M_v = 1 \text{ and } Z_v = i\}, \quad i \in \{m+1,\ldots,k\}.
    \end{cases}
\end{align*}
Note that by the above construction, if $A_i^{(4)}$ is of size smaller than $\frac{n}{k}$, then we may only move vertices into it, whereas when $A_i^{(4)}$ is of size larger than $\frac{n}{k}$ we may only move vertices outside of it. Further, if the set $A_i^{(4)}$ is of size exactly $\frac{n}{k}$, then $\Delta_i=0$, and thus the set will remain unchanged.

We first show some typical properties of the sets $\Tilde{A}_1^{(4)},\ldots, \Tilde{A}_k^{(4)}$, noting that the probability measure here is induced by the random variables $M_v$ and $Z_v$.
\begin{lemma}\label{l: typical aht properties}
\textbf{Whp}, the sets $\tilde{A}_1^{(4)},\ldots, \tilde{A}_k^{(4)}$ satisfy the following for every $i\in \br{k}$.
\begin{enumerate}
    \item $\left|\tilde{A}_i^{(4)}\triangle A_i^{(4)}\right|=o_d(n/k)$.
    \item $\left||\tilde{A}_i^{(4)}|-\frac{n}{k}\right|\le n^{2/3}$.
\end{enumerate}
\end{lemma}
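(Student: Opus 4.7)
The proof is a routine concentration argument: both conclusions concern sums of independent Bernoulli random variables indexed by $v\in Q$, so once the expectations are computed, Chernoff-type bounds (Lemma~\ref{lemma:binomial-bounds}) followed by a union bound over the $k\le d$ coordinates will finish the argument. The only non-trivial task is to verify that the parameters $p_i$ and the distribution of $Z_v$ were chosen precisely so that $\mathbb{E}\left[|\tilde{A}_i^{(4)}|\right]=n/k$ on the nose.

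First, I compute the expectations. For $i\in\br{m}$, by construction
\begin{align*}
    |\tilde{A}_i^{(4)}|=|A_i^{(4)}|-\sum_{v\in Q_i}M_v,
\end{align*}
so $\mathbb{E}\left[|\tilde{A}_i^{(4)}|\right]=(n/k+\Delta_i)-|Q_i|p_i=n/k$, using $|Q_i|=n/(5k)$ and $p_i=5k\Delta_i/n$. For $i\in\{m+1,\ldots,k\}$,
\begin{align*}
    |\tilde{A}_i^{(4)}|=|A_i^{(4)}|+\sum_{v\in Q}M_v\mathbf{1}[Z_v=i],
\end{align*}
and the expected number of added vertices is, using $\sum_{\ell\in\br{m}}\Delta_\ell=\sum_{\ell=m+1}^k\Delta_\ell$,
\begin{align*}
    \sum_{j\in\br{m}}|Q_j|p_j\cdot\frac{\Delta_i}{\sum_{\ell\in\br{m}}\Delta_\ell}=\frac{\Delta_i\sum_{j\in\br{m}}\Delta_j}{\sum_{\ell\in\br{m}}\Delta_\ell}=\Delta_i,
\end{align*}
whence $\mathbb{E}\left[|\tilde{A}_i^{(4)}|\right]=(n/k-\Delta_i)+\Delta_i=n/k$ again. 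An entirely analogous calculation shows that $\mathbb{E}\left[|\tilde{A}_i^{(4)}\triangle A_i^{(4)}|\right]=\Delta_i$ in both cases, since the symmetric difference is exactly the set of vertices whose move affects the $i$-th set.

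For each item, the quantity of interest is a sum $X$ of independent Bernoulli variables with mean at most $\sum_j\Delta_j=O(n)$; the summands are independent across $v\in Q$ because for a single $v$, the pair $(M_v,Z_v)$ is independent of $(M_u,Z_u)$ for $u\neq v$. Applying Lemma~\ref{lemma:binomial-bounds} with deviation $n^{2/3}$ gives
\[
    \Pr\left(\left|X-\mathbb{E}[X]\right|>n^{2/3}\right)\le 2\exp\left(-\Omega(n^{1/3})\right)=o(1/k),
\]
since $d$ (and hence $k$) is fixed while $n\to\infty$. Union bounding over the $k$ coordinates, \textbf{whp} the second item holds directly since $\mathbb{E}[|\tilde{A}_i^{(4)}|]=n/k$, and the first item follows from $|\tilde{A}_i^{(4)}\triangle A_i^{(4)}|\le\Delta_i+n^{2/3}=o_d(n/k)$, using $\Delta_i\le nd^{-1-\epsilon/6}$ from~\eqref{eq:Delta_i_upper_bound} together with $n/k\ge n\log d/d$. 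The main (really the only) obstacle is the bookkeeping in the expectation computation; the concentration step is standard.
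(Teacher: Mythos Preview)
Your proof is correct and follows essentially the same approach as the paper: compute that $\mathbb{E}[|\tilde{A}_i^{(4)}\triangle A_i^{(4)}|]=\Delta_i$ (and hence $\mathbb{E}[|\tilde{A}_i^{(4)}|]=n/k$), observe that the relevant quantities are sums of independent Bernoulli variables, and apply Lemma~\ref{lemma:binomial-bounds} with deviation $n^{2/3}$ followed by a union bound over $i\in\br{k}$. The only cosmetic slip is that the mean of the relevant sum is exactly $\Delta_i$, not ``at most $\sum_j\Delta_j$''; this makes no difference since even the cruder bound $\mu=O(n)$ still yields $\exp(-\Omega(n^{1/3}))$ from the Chernoff inequalities.
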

\begin{proof}
Let us first show that, for every $i \in \br{k}$, the expectation of $|\tilde{A}_i^{(4)}|$ is $n/k$. This is equivalent to showing that, for every $i \in \br{k}$, the expected number of vertices which were added/removed is $\Delta_i$. Indeed, if $i \in \br{m}$, then all vertices $v\in Q_i\subseteq A_i^{(4)}$ with $M_v=1$ will no longer be in the $i$-th set. We have
\begin{align*}
    \mathbb{E}[|\{v \in A_i^{(4)} \colon M_v = 1\}|] = \sum_{v \in Q_i} \Pr(M_v = 1) = |Q_i| \cdot \frac{\Delta_i}{n/5k} = \Delta_i.
\end{align*}
For $i \in \{m+1,\ldots,k\}$, all vertices $v\in Q$ with $M_v=1$ and $Z_v=i$ will move to the $i$-th set. We have
\begin{align*}
    \mathbb{E}[|\{v \in V(G) \colon M_v = 1 \text{ and } Z_v = i\}|] &= \sum_{j=1}^{m} \sum_{v \in Q_j} \Pr(M_v = 1 \wedge Z_v = i)\\& = \sum_{j=1}^{m} \sum_{v \in Q_j} \frac{\Delta_j}{n/5k} \cdot \frac{\Delta_i}{\Delta_1 + \dots + \Delta_m} \\
    &= \sum_{j=1}^{m} |Q_j| \cdot \frac{\Delta_j}{n/5k} \cdot \frac{\Delta_i}{\Delta_1 + \dots + \Delta_m} \\&= \sum_{j=1}^{m} \Delta_j \cdot \frac{\Delta_i}{\Delta_1 + \dots + \Delta_m} \\
    &= \Delta_i.
\end{align*}
Altogether, we obtain that, for every $i \in \br{k}$,
\begin{align}
\mathbb{E}\left[\left|\tilde{A}_i^{(4)}\triangle A_i^{(4)}\right|\right] = \Delta_i. \label{eq: deltai expect}
\end{align}

Let us now turn to estimate the probability that $\left|\tilde{A}_i^{(4)}\triangle A_i^{(4)}\right|$ deviates from $\Delta_i$ by more than $n^{2/3}$. Note that $\left|\tilde{A}_i^{(4)}\triangle A_i^{(4)}\right|\sim \text{Bin}(|Q_i|,p_i)$ for every $i\in \br{m}$. For every $i\in \{m+1,\ldots,k\}$, $\left|\tilde{A}_i^{(4)}\triangle A_i^{(4)}\right|$ is distributed according to a sum of $|Q|$ Bernoulli random variables. By \eqref{eq: deltai expect}, the expectation of this sum is $\Delta_i$. Thus, by Lemma \ref{lemma:binomial-bounds}, for any $i\in \br{k}$
\begin{align*}
    \mathbb{P}\left(\left|\tilde{A}_i^{(4)}\triangle A_i^{(4)}\right|\ge \Delta_i+n^{2/3}\right)\le e^{-\frac{n^{1/3}}{4}}.
\end{align*}
Now, if $\Delta_i<n^{2/3}$, then $\mathbb{P}\left(\left|\tilde{A}_i^{(4)}\triangle A_i^{(4)}\right|\le \Delta_i-n^{2/3}\right)=0$ for any $i\in \br{k}$. Otherwise, by Lemma \ref{lemma:binomial-bounds}, for any $i\in \br{k}$
\begin{align*}
\mathbb{P}\left(\left|\tilde A_i^{(4)}\triangle A_i^{(4)}\right|\le \Delta_i-n^{2/3}\right)\le e^{-\frac{n^{1/3}}{4}}.
\end{align*}
Thus 
 the probability that there exists $i\in \br{k}$ such that $\left||\tilde A_1^{(4)}|-\frac{n}{k}\right|>n^{2/3}$ is at most $d\cdot e^{-\frac{n^{1/3}}{4}}=o(1)$. Using that $\Delta_i=o_d(n/k)$ by~\eqref{eq:Delta_i_upper_bound}, we also obtain the first item of this lemma.
\end{proof}

Let $\hat{B}$ be the set of vertices $v\in V(G)$ satisfying at least one of the following.
\begin{itemize}
    \item That there exists $\{i,j\}\in E(T)$ such that $v\in \tilde{A}_i^{(4)}$ and $d(v,\tilde{A}_j^{(4)})\notin \left[\frac{\delta\log d}{3}, 4C\log d\right]$.
    \item $v$ satisfies that $d(v,A_i^{(4)})>\delta \log d/2$ for every $i\in \br{k}$. Further, there is some $i\in \br{k}$ such that $d(v,\tilde{A}_i^{(4)})\le \delta\log d/3$.
\end{itemize} 
\begin{lemma}\label{l: hat B}
For every $v\in V(G)$, $\mathbb{P}\left(v\in \hat{B}\right)\le d^{-100}$.
\end{lemma}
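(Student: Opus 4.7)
The plan is to show that the rebalancing step perturbs each degree $d(v, A_j^{(4)})$ by an additive error much smaller than $\log d$, with overwhelming probability over the independent random variables $\{M_u, Z_u\}_{u \in Q}$, so that neither defining condition of $\hat{B}$ can be met.

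First, I would collect the key parameter bounds. From Lemma \ref{l: no b} item \ref{l: no b item} together with Lemma \ref{l: third lll}, $\Delta_i \le n d^{-1-\epsilon/6}$ for every $i \in \br{k}$, and since $k \le d/\log d$, this gives $p_i = 5k\Delta_i/n \le 5 d^{-\epsilon/6}/\log d$. For each $v \in V(G)$ and each $j \in \br{k}$, the rebalancing changes $d(v, A_j^{(4)})$ by $-R_j^v$ for $j \in \br{m}$ and by $+I_j^v$ for $j \in \{m+1,\ldots,k\}$, where
\[
R_j^v \coloneqq \big|\{u \in N(v) \cap Q_j : M_u = 1\}\big|, \qquad I_j^v \coloneqq \big|\{u \in N(v) \cap Q : M_u = 1,\, Z_u = j\}\big|.
\]
Using $|N(v) \cap Q_i| \le d(v, A_i^{(4)}) \le 3C \log d$, we get $\mathbb{E}[R_j^v] \le 3C\log d \cdot p_j$, and by independence of $M_u$ and $Z_u$ combined with the telescoping
\[
\mathbb{E}[I_j^v] = \frac{\Delta_j}{\Delta_1+\cdots+\Delta_m} \sum_{i=1}^m |N(v) \cap Q_i|\, p_i \;\le\; 3C \log d \cdot \frac{5k\Delta_j}{n},
\]
both expectations are at most $15 C d^{-\epsilon/6}$. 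Since $R_j^v$ and $I_j^v$ are sums of independent Bernoulli variables, Lemma \ref{lemma:binomial-bounds} applied with thresholds $\delta \log d/6$ (for $R_j^v$) and $C \log d$ (for $I_j^v$) yields a ratio $b$ of order $d^{\epsilon/6}\log d$ and hence tail bounds $\exp(-\Theta(\log^2 d)) \le d^{-200}$. A union bound over $j \in \br{k}$ then ensures, with probability at least $1 - d^{-199}$, that $R_j^v \le \delta \log d/6$ for all $j \in \br{m}$ and $I_j^v \le C \log d$ for all $j \in \{m+1, \ldots, k\}$ simultaneously.

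Finally, I would verify that on this event $v \notin \hat{B}$. For the first defining condition, fix $\{i,j\} \in E(T)$ with $v \in \tilde{A}_i^{(4)}$. Either $v \in A_i^{(4)}$ originally, in which case Lemma \ref{l: no b} gives $d(v, A_j^{(4)}) \in [\delta \log d/2, 3C \log d]$; or $v \in Q_{i'}$ was moved to $\tilde{A}_i^{(4)}$, in which case the same range holds for $d(v, A_j^{(4)})$ by the choice of $Q_{i'}$ in Section \ref{s: equal size} (which relies on the fourth property of Lemma \ref{l: no b}). If $j \in \br{m}$, then $\tilde{A}_j^{(4)} \subseteq A_j^{(4)}$, so $d(v, \tilde{A}_j^{(4)}) \in [d(v, A_j^{(4)}) - R_j^v,\, d(v, A_j^{(4)})] \subseteq [\delta \log d/3, 4C \log d]$; if $j \in \{m+1,\ldots,k\}$, symmetrically $d(v, \tilde{A}_j^{(4)}) \in [d(v, A_j^{(4)}),\, d(v, A_j^{(4)}) + I_j^v]$ lies in the same interval. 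For the second condition, the case $i \in \{m+1,\ldots,k\}$ is immediate since $d(v, \tilde{A}_i^{(4)}) \ge d(v, A_i^{(4)}) > \delta \log d/2$, and for $i \in \br{m}$ the bound $R_i^v \le \delta \log d/6$ combined with $d(v, A_i^{(4)}) > \delta \log d/2$ forces $d(v, \tilde{A}_i^{(4)}) > \delta \log d/3$. Thus $\mathbb{P}(v \in \hat{B}) \le d^{-199} \ll d^{-100}$. The main subtlety I anticipate is cleanly tracking the two ways $v$ can lie in $\tilde{A}_i^{(4)}$; the crucial fact that makes both cases yield the same range for $d(v, A_j^{(4)})$ is precisely that $Q$ was constructed in Section \ref{s: equal size} out of vertices with uniformly good degrees into every $A_j^{(4)}$.
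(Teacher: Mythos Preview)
Your proof is correct and follows essentially the same approach as the paper's: decompose the perturbation of $d(v,A_j^{(4)})$ into neighbours moved out (your $R_j^v$, the paper's $B_-(v,j)$) and neighbours moved in (your $I_j^v$, the paper's $B_+(v,j)$), bound both expectations by $O(d^{-\epsilon/6})$ using $\Delta_i\le nd^{-1-\epsilon/6}$, apply the Chernoff-type bound from Lemma~\ref{lemma:binomial-bounds}, and then check both items of $\hat B$ using that vertices in $Q$ have uniformly good degrees into every $A_j^{(4)}$. The only cosmetic difference is your choice of thresholds $\delta\log d/6$ and $C\log d$ in place of the paper's single threshold $\log\log d$; either works.
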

\begin{proof}
For every $j\in \{m+1,\ldots, k\}$, let $B_+(v,j)\coloneqq \{u\in N(v)\cap Q : M_u=1, Z_u=j\}$. That is, $B_+(v,j)$ is the set of neighbours of $v$ which are moved to the $j$-th set. Note that $|B_+(v,j)|$ is the sum of independent Bernoulli random variables. Recalling that $d(v,A_i^{(4)})\leq 3C\log d$ for every $i\in\br{k}$, we have that
\begin{align*}
    \mathbb{E}\left[|B_+(v,j)|\right]&=\sum_{i=1}^m\sum_{u\in N(v)\cap Q_i}\mathbb{P}(M_u=1,Z_u=j)\le \sum_{i=1}^{m}3C\log d \cdot \frac{\Delta_i}{n/5k}\cdot \frac{\Delta_j}{\Delta_1+\cdots+\Delta_m}\\
    &=\frac{3C\log d\cdot \Delta_j}{n/5k}\le \frac{1}{d^{\epsilon/7}},
\end{align*}
where we used $\Delta_j\le nd^{-1-\epsilon/6}$. Thus, by Lemma \ref{lemma:binomial-bounds},
\begin{align*}
    \mathbb{P}\left(|B_+(v,j)|\ge \log\log d\right)\le d^{-101}.
\end{align*}
For every $j\in \br{m}$, let $B_-(v,j)\coloneqq \{u\in N(v)\cap A_j^{(4)}\cap Q\colon M_u=1\}$. That is, $B_-(v,j)$ is the set of neighbours of $v$ which are moved out of $A_j^{(4)}$. Recalling that $d(v,A_j^{(4)})\leq 3C\log d$, we have that $|B_-(v,j)|$ is stochastically dominated by $\text{Bin}\left(3C\log d,\frac{\Delta_i}{n/5k}\right)$. Thus, by Lemma \ref{lemma:binomial-bounds},
\begin{align*}
    \mathbb{P}\left(|B_-(v,j)|\ge \log\log d\right)\le \mathbb{P}\left(\text{Bin}\left(3C\log d,\frac{1}{d^{\epsilon/7}}\right)\ge \log\log d\right)\le d^{-101},
\end{align*}
where we used that $\Delta_i\le nd^{-1-\epsilon/6}$.

Therefore, for any $v\in V(G)$ and $i\in \br{k}$, 
\begin{align}
\mathbb{P}\left(\left|d(v,\tilde{A}_i^{(4)})-d(v,A_i^{(4)})\right|\ge\log\log d\right)\le 2d^{-101}.
\label{eq:d_tilde_d_difference}
\end{align}
Therefore, for the second item in the definition of $\hat{B}$, if $v$ satisfies that $d(v,A_i^{(4)})>\delta \log d/2$, then the probability that $d(v,\tilde{A}_i^{(4)})\le \delta\log d/3$ is at most $2d^{-101}$. Similarly, for the first item, fix $i\in \br{k}$ and $j$ such that $\{i,j\}\in E(T)$. Fix $v\in \tilde{A}_i^{(4)}$. We have that $d(v, A_j^{(4)})\in \left[\frac{\delta\log d}{2},3C\log d\right]$ for any $j$ such that $\{i,j\}\in E(T)$. Indeed, if $v\in A_i^{(4)}$, this holds by Lemma \ref{l: no b}. Otherwise, $v\in Q$ and then for every $j\in \br{k}$ we have $d(v,A_j^{(4)})\in \left[\frac{\delta\log d}{2},3C\log d\right]$. Similarly, the probability that $v\in \hat{B}$ is then at most $2d^{-101}$ due to~\eqref{eq:d_tilde_d_difference}.
\end{proof}

We are now ready to apply Corollary \ref{cor: lll}.
\begin{lemma}\label{l: final lll}
With probability at least $\frac{1}{2}-o(1)$ (in the product measure induced by $M_v$ and $Z_v$, $v\in Q$), there exist disjoint sets $A_1^{(5)},\ldots, A_k^{(5)}$ such that the following holds.
\begin{enumerate}
    \item For every $\{i,j\}\in E(T)$ and for every $v\in A_i^{(5)}$, we have that $d(v,A_j^{(5)})\in \left[\frac{\delta\log d}{3},4C\log d\right]$.
    \item For every $i\in \br{k}$, we have $\left|A_i^{(5)}\triangle \tilde{A}_i^{(4)}\right|=O\left(\frac{n}{d^{50}}\right)$.\label{l: final lll item distance}
    \item For every $i\in \br{k}$, there are at least $\frac{n}{6k}$ vertices $v\in A_i^{(5)}$ which satisfy that $d(v,A_j^{(5)})\in \left[\frac{\delta\log d}{3},4C\log d\right]$ for every $j \in \br{k}$. 
\end{enumerate}
\end{lemma}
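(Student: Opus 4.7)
The plan is to apply the algorithmic Lov\'asz Local Lemma (Corollary \ref{cor: lll}) to the bad events $\{F_v \coloneqq \{v \in \hat B\}\}_{v \in V(G)}$ with respect to the product measure induced by the mutually independent pairs $(M_v, Z_v)_{v \in Q}$. By Lemma \ref{l: hat B}, $\mathbb{P}(F_v) \le d^{-100} =: q$. Each $F_v$ is determined by the random variables indexed by $(\{v\} \cup N(v)) \cap Q$, so $\Delta_1 := d+1$; and two events $F_u, F_v$ are dependent only if $u$ and $v$ share a neighbour in $Q$ (equivalently, lie within graph distance two), so $\Delta_2 := d^2$ suffices. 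Setting $\beta := 4 d^{-100}$, the hypothesis $q \le \beta(1-\beta)^{\Delta_2}$ holds exactly as in \eqref{eq: beta q ineq}.

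Corollary \ref{cor: lll} then yields, with probability at least $1/2$, sets $A_1^{(5)}, \ldots, A_k^{(5)}$ such that $v \notin \hat B$ for every $v \in V(G)$ and
$$|A_i^{(5)} \triangle \tilde A_i^{(4)}| \le 2 \Delta_1 |\mathcal F| \frac{\beta}{1-\beta} = O(n/d^{98}) \quad \text{for every } i \in \br{k};$$
if no such sets exist, I would simply take $A_i^{(5)} = \tilde A_i^{(4)}$. Property 1 is then immediate from the first bullet in the definition of $\hat B$, and property 2 follows directly from the closeness bound.

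The main obstacle is property 3, since the definition of $\hat B$ only controls degree upper bounds for pairs $(i,j) \in E(T)$, not for arbitrary $(i,j) \in \br{k}^2$. To handle arbitrary pairs, I would start from the at least $n/(5k)$ vertices $v \in A_i^{(4)}$ furnished by items 3 and 4 of Lemma \ref{l: no b}, each satisfying $d(v, A_\ell^{(4)}) \in [\delta \log d/2, 3C \log d]$ for every $\ell \in \br{k}$. The concentration estimates on $|B_\pm(v, j)|$ obtained inside the proof of Lemma \ref{l: hat B} yield $\mathbb{P}(|d(v, \tilde A_j^{(4)}) - d(v, A_j^{(4)})| > \log \log d) \le 2 d^{-101}$ for each fixed $(v, j)$; a union bound over the $n k$ pairs followed by Markov's inequality shows that at most $o_d(n/k)$ vertices have this tight degree bound violated for some $j$. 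Since the LLL transition moves at most $\sum_i |A_i^{(5)} \triangle \tilde A_i^{(4)}|/2 = O(n/d^{97})$ vertices in total, a double-counting argument bounds the number of vertices $v$ with more than $\log \log d$ neighbours affected by this transition by $O(nd/(d^{97} \log \log d)) = o(n/k)$. Combining these perturbation controls with $|A_i^{(4)} \triangle A_i^{(5)}| = o_d(n/k)$ (from Lemma \ref{l: typical aht properties} item 1 together with property 2), at least $n/(5k) - o_d(n/k) \ge n/(6k)$ vertices $v \in A_i^{(5)}$ satisfy $d(v, A_j^{(5)}) \in [\delta \log d/3, 4C \log d]$ for every $j \in \br{k}$, as required.
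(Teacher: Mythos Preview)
Your LLL application (items 1 and 2) matches the paper's almost verbatim; your $\Delta_1=d+1$, $\Delta_2=d^2$ are tighter than the paper's $2d+2$, $2d^4$, but either choice works with $\beta=4d^{-100}$.

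For item 3 you take a genuinely different route. The paper's proof is a one-liner: the \emph{second} bullet in the definition of $\hat B$ was placed there precisely so that, once $\hat B=\varnothing$ after resampling, every vertex $v$ with $d(v,A_\ell^{(4)})>\delta\log d/2$ for all $\ell$ automatically has $d(v,A_\ell^{(5)})>\delta\log d/3$ for all $\ell$. Since $|A_i^{(5)}\triangle A_i^{(4)}|=o_d(n/k)$, the $n/(5k)$ good vertices from Lemma~\ref{l: no b} survive in $A_i^{(5)}$ with the required lower bound, and the paper is done. You appear to have overlooked this role of the second bullet, and instead prove item 3 from scratch via a two-stage perturbation argument: first controlling $|d(v,\tilde A_j^{(4)})-d(v,A_j^{(4)})|$ for the \emph{initial} random $\tilde A^{(4)}$ by Markov, then double-counting neighbours affected by the Moser--Tardos resampling. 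This is correct (the relevant high-probability events intersect the probability-$\ge 1/2$ event from Corollary~\ref{cor: lll} with probability $\ge 1/2-o(1)$), but it is considerably longer than necessary. On the other hand, your approach does buy something: it simultaneously yields the upper bound $d(v,A_j^{(5)})\le 4C\log d$ for \emph{all} $j\in\br{k}$, which the paper's invocation of the second bullet does not address directly.
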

\begin{proof}
For every $v\in V(G)$, let $F_v$ be the event that $v\in \hat{B}$. Let $\mathcal{F}\coloneqq \{F_v\}_{v\in V(G)}$. By Lemma~\ref{l: hat B}, we have that for every $v\in V(G)$, $\mathbb{P}\left(F_v\right)\le d^{-100}\eqqcolon q$. Observe that every $F_v$ is determined by at most $\Delta_1\coloneqq 2d+2$ random variables ($M_u$ and $Z_u$ for every $u\in \{v\cup N(v)\})$. Furthermore, every $W(v)$ depends on at most $\Delta_2\coloneqq 2d^4$ other events. Furthermore, similarly to \eqref{eq: beta q ineq}, $\beta\coloneqq 4d^{-100}$ satisfies $\beta(1-\beta)^{\Delta_2}\ge q$. Thus, by Corollary \ref{cor: lll}, we obtain that with probability at least $\frac{1}{2}$, there exist sets $A_1^{(5)},\ldots, A_k^{(5)}$ such that $\hat{B}=\varnothing$ and $\left|A_i^{(5)}\triangle \tilde{A}_i^{(4)}\right|\le 2\cdot (2d+2)\cdot n\cdot \frac{4d^{-100}}{1-4d^{-100}}\le \frac{n}{d^{51}}$ for every $i\in \br{k}$. These sets then satisfy, by definition, the first two items. Fix $A_1^{(5)},\ldots, A_k^{(5)}$ to be these sets (and if they do not exist, set $A_i^{(5)}=\tilde{A}_i^{(4)}$). Then, recall that by Lemma \ref{l: no b}, for every $i\in \br{k}$ there are at least $\frac{n}{5k}$ vertices $v\in A_i^{(4)}$ which satisfy that $d(v,A_j^{(4)})\ge \frac{\delta\log d}{2}$ for every $j\in \br{k}$. Thus, by the second item of $\hat{B}$, the sets $A_1^{(5)},\ldots, A_k^{(5)}$ satisfy that for every $i\in \br{k}$, there are at least $\frac{n}{6k}$ vertices $v\in A_i^{(5)}$ which satisfy that $d(v,A_j^{(5)})\in \left[\frac{\delta\log d}{3},4C\log d\right]$ for every $j \in \br{k}$.
\end{proof}
We have just proved that there exist sets $A_1^{(5)},\ldots, A_k^{(5)}$ satisfying the properties as in the statement of Lemma \ref{l: final lll}. We are now ready to complete the proof of Proposition \ref{prop: main path}. To that end, let us first show we can move the vertices between the sets $A_1^{(5)},\ldots, A_k^{(5)}$ to obtain sets $V_1,\ldots,V_k$, each with exactly $\frac{n}{k}$ vertices, while maintaining the degree distribution between the sets.
\begin{lemma}\label{l: final round}
There exists sets $V_1,\ldots,V_k$ such that the following holds.
\begin{enumerate}
    \item For every $\{i,j\}\in E(T)$ and for every $v\in V_i$, we have that $d(v,V_j)\in \left[\frac{\delta\log d}{4},5C\log d\right]$.
    \item $\left|V_i\triangle A_i^{(5)}\right|=o_d(n/k)$ for every $i\in \br{k}$.\label{l: final item distance}
    \item $|V_i|=\frac{n}{k}$ for every $i\in \br{k}$.
\end{enumerate}
\end{lemma}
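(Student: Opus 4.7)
The plan is to perform a deterministic rebalancing of $A_1^{(5)},\ldots,A_k^{(5)}$ by moving a small vertex set $M$ between them, chosen so that every vertex of $G$ has at most one neighbour in $M$. I would let $\Delta_i:=|A_i^{(5)}|-n/k$; by item~2 of Lemma~\ref{l: final lll} combined with Lemma~\ref{l: typical aht properties}, $|\Delta_i|=O(n/d^{50})$ and $\sum_i\Delta_i=0$. After relabelling, let $i\in\br{m}$ index the oversized sets (with $\Delta_i>0$) and $i\in\{m+1,\ldots,k\}$ the undersized ones.

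Call a vertex $v\in A_i^{(5)}$ \emph{nice} if $d(v,A_j^{(5)})\in[\delta\log d/3,4C\log d]$ for every $j\in\br{k}$; I read item~3 of Lemma~\ref{l: final lll} (in the spirit of its analogue, item~4 of Lemma~\ref{l: no b}) as saying that each $A_i^{(5)}$ contains at least $n/(6k)$ nice vertices. For each oversized $i$, I would pick $\Delta_i$ nice vertices $M_i\subseteq A_i^{(5)}$, so that $M:=\bigsqcup_{i\in\br{m}}M_i$ forms an independent set in $G^2$, i.e.\ $d(u,M)\le 1$ for every $u\in V(G)$. This is done by a direct greedy procedure: process the oversized sets one by one, and add vertices from the nice pool of the current set one at a time, excluding any vertex at distance at most $2$ in $G$ from a previously chosen vertex. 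Each chosen vertex blocks at most $d^2$ candidates, so the total number of blocked vertices never exceeds $|M|\cdot d^2\le O(n/d^{49})\cdot d^2=O(n/d^{47})$, which is dwarfed by the nice pool size $n/(6k)=\Omega(n\log d/d)$; hence the greedy procedure never gets stuck.

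With $M$ in hand, I would partition $M=\bigsqcup_{j=m+1}^k M_j'$ arbitrarily with $|M_j'|=|\Delta_j|$, and define
$$V_i:=\begin{cases}A_i^{(5)}\setminus M_i,& i\in\br{m},\\ A_i^{(5)}\cup M_i',& i\in\{m+1,\ldots,k\}.\end{cases}$$
Then $|V_i|=n/k$ and $|V_i\triangle A_i^{(5)}|\le |M|=O(n/d^{49})=o_d(n/k)$, giving items~2 and~3. For item~1, since $V_{j'}=(A_{j'}^{(5)}\setminus(M\cap A_{j'}^{(5)}))\cup M_{j'}'$ for every $j'\in\br{k}$, one has $|d(v,V_{j'})-d(v,A_{j'}^{(5)})|\le d(v,M)\le 1$ for every $v\in V(G)$. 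For any $v\in V_i$ with $\{i,j'\}\in E(T)$: if $v\notin M$ then $v\in A_i^{(5)}$ and item~1 of Lemma~\ref{l: final lll} gives $d(v,A_{j'}^{(5)})\in[\delta\log d/3,4C\log d]$; if $v\in M$, then $v$ was originally picked as nice (in some oversized $A_\ell^{(5)}$), so $d(v,A_{j'}^{(5)})\in[\delta\log d/3,4C\log d]$ as well. Either way $d(v,V_{j'})\in[\delta\log d/3-1,4C\log d+1]\subseteq[\delta\log d/4,5C\log d]$ for $d$ sufficiently large.

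The hard part is the greedy selection of $M$: I need enough nice vertices to meet the $\Delta_i$ quotas while keeping $M$ sparse (no two at distance $\le 2$ in $G$). The comfortable slack between $|M|d^2=O(n/d^{47})$ and the per-set nice pool size $\Omega(n\log d/d)$ makes this possible, and once $M$ is built the degree and size bounds follow directly from the single-neighbour property of $M$.
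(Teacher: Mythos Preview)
Your proposal is correct and follows essentially the same approach as the paper's own proof: both arguments greedily select, from the pool of at least $n/(6k)$ ``nice'' vertices in each oversized $A_i^{(5)}$, a set of pairwise distance-$>2$ vertices to move into undersized sets, using the slack $n/(6k)\gg |M|\cdot d^2$ to guarantee the greedy process never runs out, and then observe that each vertex's degree into any part changes by at most one. The only cosmetic difference is that the paper phrases the selection as a single inductive loop (moving one vertex at a time directly to an undersized set), whereas you first build the entire set $M$ and then distribute it; the underlying computation and the degree-shift-by-one conclusion are identical.
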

\begin{proof}
It suffices to show that we can move vertices from sets of size larger than $\frac{n}{k}$ to sets of smaller size, without changing the degree of any vertex into any of the sets by more than one. Consider the following procedure. We start with the sets $A_1^{(5)},\ldots, A_k^{(5)}$. 

Recall that for every $i\in \br{k}$, at least $\frac{n}{6k}$ vertices $v\in A_i^{(5)}$ satisfy $d(v,A_j^{(5)})\in \left[\frac{\delta\log d}{3},4C\log d\right]$ for every $j\in \br{k}$. Furthermore, by the second item in Lemma \ref{l: typical aht properties} together with the second item in Lemma \ref{l: final lll}, $\left||A_i^{(5)}|-\frac{n}{k}\right|=O(n/d^{50})$ for every $i\in \br{k}$. 

We proceed inductively. Suppose we have already moved vertices $v_1,\ldots, v_t$ and that there still exists a set $A_i^{(5)}$ of size larger than $n/k$. In particular, $t\le k\cdot O(n/d^{50})<n/d^{45}$. Note that
\begin{align*}
    \frac{n}{6k}-d^2t\ge \frac{n}{7k}>0,
\end{align*}
and thus there exists a vertex $v\in A_i^{(5)}$ which satisfies the following two properties:
\begin{itemize}
    \item $v$ is not in the second neighbourhood of any $v_1,\ldots, v_t$, and,
    \item for every $j\in \br{k}$, the number of neighbours of $v$ in $A_j^{(5)}$ lies in the interval $\left[\frac{\delta\log d}{3},5C\log d\right]$.
\end{itemize}
We move the vertex $v$ to an arbitrary set of size smaller than $\frac{n}{k}$. Observe that the above two properties guarantee that throughout the entire process, the degree of any vertex into any set will change by at most one. 
\end{proof}

\begin{proof}[Proof of Proposition \ref{prop: main path}]
We have showed that, with probability $1/4-o(1)$ (in the measure induced by $(S_1,\ldots,S_k)$), there exist sets $V_1,\ldots,V_k$ that satisfy item 1 and item 3 from Lemma~\ref{l: final round} and that $|V_i\triangle S_i|=o_d(n/k)$ for every $i\in \br{k}$, due to Lemmas \ref{l: third lll}\eqref{l: item third lll}, \ref{l: no b}\eqref{l: no b item}, \ref{l: final lll}\eqref{l: final lll item distance}, and \ref{l: final round}\eqref{l: final item distance}. 

Now, let $\tilde{S}_1,\ldots, \tilde{S}_k$ be a uniformly random partition of $V(G)$: every $v\in V(G)$ is assigned to $\tilde{S}_i$ for an index $i\in \br{k}$ chosen uniformly at random, independently of all the other vertices. All that is left then is to observe that there is a coupling $(S'_i,\tilde S_i)$  such that $(S'_1,\ldots,S'_k)\stackrel{d}=(S_1,\ldots,S_k)$ and  $|S'_i\triangle \tilde{S}_i|=o_d(n/k)$ for every $i\in \br{k}$ \textbf{whp}.

Indeed, consider the following coupling. Initially, for every $i\in \br{k}$, we set $S'_i=\tilde{S}_i$. We then keep every vertex $v\in S'_i$, for every $i\in \br{h}$, with probability $1-\alpha$, and with probability $\alpha$ we remove $v$ from $S_i$. Let $N_1$ be the set of removed vertices. Recall that the choice of $\alpha$ is according to Lemma~\ref{l:alpha-choice}, thus we removed at most $n/d^{1+\epsilon/5}$ vertices from every set $S'_i$ \textbf{whp}. 
 Observe that $(S'_1,\ldots,S'_h)\stackrel{d}=(S_1,\ldots,S_h)$ and \textbf{whp} $|S'_i\triangle \tilde{S}_i|=o_d(n/k)$. However, sets $S'_{h+1},\ldots,S'_k$ should be still perturbed since the set $U$ from~\eqref{eq:U-def} is obtained after two resamples due to Corollary~\ref{cor: lll}. Thus, we now consider the first two applications of the algorithmic Lov\'asz Local Lemma. By Lemmas \ref{l: first LLL large degree} and \ref{l: second lll} and since we defined $A_i^{(1)}:=S_i$ for partitions $(S_1,\ldots,S_h)$ that do not satisfy the event from the statement of Lemma \ref{l: first LLL large degree}, for every $i \in \br{h}$, the number of vertices which are moved inside/outside of $S_i$ is $o_d(n/k)$. Denote by $N^{+}_2$ and $N_2^-$ the sets of vertices which were moved in these first two applications of the algorithmic Lov\'asz Local Lemma from outside of $S'_1\cup\ldots\cup S'_h$ to $A_1^{(2)}\cup\ldots\cup A_h^{(2)}$ and from $S'_1\cup\ldots\cup S'_h$ outside of $A_1^{(2)}\cup\ldots\cup A_h^{(2)}$, respectively. We have that $|N_2^-|=o_d(n)$ and $|N_2^+|=o_d(n)$. We then partition the set $N_1\cup N_2^-\setminus N_2^+$ that has size $o_d(n)$ \textbf{whp} uniformly at random into $S''_{h+1},\ldots,S''_k$. Letting $S'_i:=S'_i\cup S''_i$, for every $i\in\br{k}\setminus\br{h}$, and recalling that $k-h=(1-o_d(1))k$, we get that $(S'_1,\ldots,S'_k)\stackrel{d}=(S_1,\ldots,S_k)$ and that $|S'_i\triangle \tilde{S}_i|=o_d(n/k)$ for every $i\in \br{k}$ \textbf{whp}, completing the proof.
\end{proof}

\subsection{Small trees}\label{s: small prop}
For small trees we prove a stronger version of Proposition \ref{prop: main path}:
\begin{proposition}\label{prop: main small k}
Let $k\le \frac{d}{10\log d}$. Let $G\in \mathcal{G}_d$, and suppose that $n$ is divisible by $k$. Then, there exists a sufficiently large constant $C\coloneqq C(\epsilon)>0$ and a sufficiently small constant $\delta\coloneqq \delta(\epsilon)>0$ such that the following holds.

Let $S_1,\ldots, S_k$ be a uniformly random partition of $V(G)$: for every $i\in\br{k}$ and for every $v\in V(G)$, the vertex $v$ belongs to $S_i$ with probability $1/k$, independently of all the other vertices. Then, with probability bounded away from zero, there are disjoint sets $V_1,\ldots, V_k\subseteq V(G)$, each of size $\frac{n}{k}$, with the following properties.
\begin{enumerate}[(P\arabic*{})]
    \item $|S_i\triangle V_i|=o_d(n/k)$ for every $i\in \br{k}$. \label{p: close to uniform small}
    \item $d(v,V_i)\in \left[\frac{\delta d}{k},\frac{Cd}{k}\right]$ for every $i\in \br{k}$ and $v\in V(G)$. \label{p: good degree between small}
\end{enumerate}
\end{proposition}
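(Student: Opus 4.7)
The plan is to prove Proposition~\ref{prop: main small k} via a single application of the algorithmic Lovász Local Lemma, followed by a deterministic balancing step, thereby collapsing the five rounds used in the proof of Proposition~\ref{prop: main path} into one. The simplification comes from the hypothesis $k\le d/(10\log d)$, which forces the mean of $\text{Bin}(d,1/k)$ to be at least $10\log d$. This makes both tails of this distribution small enough that a single LLL round can enforce the correct degree range into every $S_j$ simultaneously, not only into those $S_j$ with $\{i,j\}\in E(T)$ when $v\in S_i$.

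Concretely, for every $v\in V(G)$ I define $B_v$ to be the event that there exists $j\in\br{k}$ with $d(v,S_j)\notin[\delta d/k,\,Cd/k]$. The upper tail $\mathbb{P}(d(v,S_j)\ge Cd/k)\le d^{-100}$ is immediate from Lemma~\ref{lemma:binomial-bounds}(1) for $C$ large. For the lower tail, writing $t=d/k\ge 10\log d$ and bounding the binomial sum by $(\delta t+1)\cdot\binom{d}{\delta t}(1/k)^{\delta t}(1-1/k)^{d-\delta t}\le t\cdot e^{-t(1-2\delta-\delta\log(1/\delta))}$ yields $\mathbb{P}(\text{Bin}(d,1/k)\le\delta t)\le d^{-8}$ for $\delta$ sufficiently small, hence $\mathbb{P}(B_v)\le k\cdot d^{-8}\le d^{-7}$. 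Since $B_v$ is determined by the $d+1$ variables $X_u$ with $u\in\{v\}\cup N(v)$, and is independent of all but at most $d^2$ other events $B_u$, choosing $\beta=4d^{-7}$ gives $\beta(1-\beta)^{d^2}\ge d^{-7}$. Corollary~\ref{cor: lll} then yields, with probability at least $1/2$, a partition $V_1^{(1)},\ldots,V_k^{(1)}$ of $V(G)$ for which no $B_v$ holds and $|S_i\triangle V_i^{(1)}|\le 2(d+1)\cdot n\cdot 4d^{-7}/(1-o(1))=O(n/d^6)$ for every $i\in\br{k}$. In particular, every $v\in V(G)$ satisfies $d(v,V_j^{(1)})\in[\delta d/k,\,Cd/k]$ for every $j$, and combining with a standard Chernoff bound on $|S_i|$ yields $\bigl||V_i^{(1)}|-n/k\bigr|=O(n/d^6)$.

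The last step is to balance the sizes to exactly $n/k$ by a deterministic greedy procedure, in the spirit of Lemma~\ref{l: final round}. While some $V_i^{(1)}$ is oversized, pick any $v\in V_i^{(1)}$ at distance greater than $2$ (in $G$) from every previously moved vertex, and move it to an arbitrary undersized part. The total number of moves is $O(n/d^5)$, so the union of the $2$-balls around the moved vertices has size $O(n/d^3)$, which is much smaller than $n/(2k)\ge 5n\log d/d$; hence a suitable $v$ always exists in every oversized part. Because any two moved vertices are at distance at least $3$, no vertex has two moved neighbours, so $d(v,V_j)$ changes by at most $1$ for every $v$ and every $j$ throughout the process. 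The resulting $V_1,\ldots,V_k$ therefore satisfy $d(v,V_j)\in[\delta d/k-1,\,Cd/k+1]$, and absorbing the $\pm 1$ into slightly adjusted constants $\delta',C'$ gives~\ref{p: good degree between small}. Property~\ref{p: close to uniform small} follows from $|S_i\triangle V_i|\le|S_i\triangle V_i^{(1)}|+|V_i^{(1)}\triangle V_i|=O(n/d^6)+O(n/d^5)=o_d(n/k)$.

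The main technical point is the lower-tail estimate $\mathbb{P}(\text{Bin}(d,1/k)\le\delta d/k)\le d^{-8}$, which is precisely where the hypothesis $k\le d/(10\log d)$ is used in an essential way: in the range $k\in[d/(10\log d),\,(1-\epsilon)d/\log d]$ the lower-tail probability is only of order $d^{-1-\epsilon'}$ for some small $\epsilon'>0$, as discussed in Section~\ref{s: outline}, which is why the multi-round structure of Proposition~\ref{prop: main path} is needed there; here one LLL round together with the greedy balancing suffices.
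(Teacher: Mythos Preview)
Your proof is correct and follows essentially the same approach as the paper: a single application of Corollary~\ref{cor: lll} to the events $\{\exists j:\,d(v,S_j)\notin[\delta d/k,Cd/k]\}$, followed by the deterministic greedy balancing of Lemma~\ref{l: final round}. The only cosmetic differences are your choice of LLL parameter $\beta=4d^{-7}$ (the paper uses the looser $\beta=4d^{-4}$, yielding $|S_i\triangle A_i|=O(n/d^3)$ instead of your $O(n/d^6)$), and the harmless over-count $\Delta_1=d+1$ (in fact $B_v$ depends only on $X_u$ for $u\in N(v)$, so $\Delta_1=d$).
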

\begin{proof}[Proof]
We begin by assigning to every vertex $v\in V(G)$ a random variable $X_v$ supported on $\br{k}$, where $\mathbb{P}(X_v = i)=\frac{1}{k}$ for every $i \in \br{k}$. For every $i \in \br{k}$, set $S_i \coloneqq \{v\in V(G)\colon X_v=i\}$. Let $W$ be the set of vertices $v \in V(G)$ such that there exists $i\in \br{k}$ for which $d(v,S_i)\notin[\delta d/k, Cd/k]$. Now, by Lemma \ref{lemma:binomial-bounds},
\begin{align*}
    \mathbb{P}\left(\exists i\in \br{k}, d(v,S_i)\ge Cd/k\right)&\le d\cdot \mathbb{P}\left(\text{Bin}\left(d,\frac{1}{k}\right)\ge Cd/k\right)\\
    &\le d\cdot e^{-\frac{Cd}{4k}}\le d^{-100},
\end{align*}
for sufficiently large $C$. Further, 
\begin{align*}
    \mathbb{P}\left(\exists i\in \br{k}, d(v,S_i)\le \delta d/k\right)
    &\le d\cdot \mathbb{P}\left(\text{Bin}\left(d,\frac{1}{k}\right)\le \delta d/k\right)\\
    &\le d^2{\binom{d}{\delta d/k}}k^{-\delta d/k}(1-1/k)^{(1-\delta/k)d}\\
    &\leq d^2\exp\left(\frac{d}{k}(\delta\ln (e/\delta)-1+\delta)\right)\\
    &\leq d^2\cdot d^{-10(1-\delta(1+\ln(e/\delta)))}\le d^{-7},
\end{align*}
where we used that $k\le \frac{d}{10\log d}$. 

For every $v\in V(G)$, let $F_v$ be the event that $v\in W$, and let $\mathcal{F}\coloneqq \{F_v\}_{v\in V(G)}$. Observe that every $F_v$ is determined by $\Delta_1\coloneqq d$ random variables (its neighbours). Furthermore, every $F_v$ depends on at most $\Delta_2\coloneqq d^2$ other events (revealing whether a vertex $v$ satisfies $F_v$ may only affect the probability that $u$ satisfies $F_u$ for $u$ which is in the second neighbourhood of $v$). Furthermore, note that $\beta\coloneqq 4d^{-4}$ satisfies 
\begin{align*}
    \beta(1-\beta)^{\Delta_2}&=4d^{-4}(1-4d^{-4})^{d^2}
    \ge 4d^{-4}e^{-d^{-2}}
    \ge d^{-4}=:q>\mathbb{P}(F_v),\quad\text{ for every $v\in V(G)$}.
\end{align*}
By Corollary \ref{cor: lll}, we obtain that with probability at least $\frac{1}{2}$, there are sets $A_1,\ldots, A_k$ which satisfy the following for every $i \in \br{k}$. 
\begin{itemize}
    \item $\left|S_i\triangle A_i\right|=O(n/d^3)=o_d(n/k)$.
    \item $d(v,A_i)\in \left[\frac{\delta d}{k}+1,\frac{Cd}{k}-1\right]$ for every $v \in V(G)$.
\end{itemize}
We now fix these sets. Due to the first item and the Chernoff bound, we also get $\left||A_i|-\frac{n}{k}\right|=O(n/d^{3})$ \textbf{whp}.

We now want to `balance the sets', that is, obtain sets $V_1,\ldots, V_k$, each of size exactly $\frac{n}{k}$, which satisfy that, for every $i\in\br{k}$, $\left|S_i\triangle V_i\right|=o_d(n/k)$ and $d(v,V_i)\in \left[\frac{\delta d}{k}+1,\frac{Cd}{k}-1\right]$ for every $v\in V(G)$. To that end, we follow the same proof as in Lemma \ref{l: final round} --- we iteratively move vertices from sets $A_i$ of size larger than $\frac{n}{k}$ into sets $A_j$ of size smaller than $\frac{n}{k}$, each time choosing a vertex which is not in the second neighbourhood of the previous vertices (and thus the degree of any vertex does not change by more than one into any of the sets), and utilise that $d^2\cdot O(n/d^3)=o_d(n/k)$. After this procedure, we obtain the required sets.
\end{proof}

\section{Growing the trees}\label{s: theorems proof}
In this section, we show how to construct the tree factor, given the vertex partition guaranteed by Propositions \ref{prop: main path} and \ref{prop: main small k}. 

In Section \ref{subsection:typical-properties}, we collect typical properties of $G(n, d)$ which are important for proving Theorem \ref{th: main}. In Section \ref{subsection:proof-of-main-thm}, we prove Theorem~\ref{th: main}. We do so by applying Proposition \ref{prop: main path} (or its stronger version, Proposition \ref{prop: main small k}, when trees are small) and, using the typical properties we have shown in Section \ref{subsection:typical-properties}, we will find a perfect matching between $V_i$ and $V_j$ for every $\{i,j\} \in E(T)$.

\subsection{Typical Properties of random regular graphs}\label{subsection:typical-properties}
The following claim shows that typically there are not `too many' edges between any two small sets of equal size. 
\begin{claim}\label{claim:global-event-between-small-sets}
    For every $\epsilon, \delta > 0$, there exists $\eta > 0$ such that, for sufficiently large $d$, \textbf{whp} the following holds in $G\sim G(n, d)$. Let $k \le \frac{(1-\epsilon)d}{\log d}$ be an integer. Then, for every two disjoint sets $A,B\subseteq V(G)$ satisfying $|A| = |B| < \eta \cdot \frac{n}{k}$,
    \[
        e(A, B) < |A| \cdot \delta \cdot \frac{d}{k}.
    \]    
\end{claim}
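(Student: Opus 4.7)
The plan is to prove the claim by a first-moment calculation, summing over the size $a = |A| = |B|$ and applying Corollary~\ref{cor: gao}. Since $G \sim G(n,d)$ is exactly the uniform random graph with degree sequence $(d,\dots,d)$, one has $M = nd$ and $d(A) = d(B) = ad$. For $\eta < 1/2$, the hypothesis $d(B) \le M/2$ of Corollary~\ref{cor: gao} holds, so setting $\ell \coloneqq \lceil a\delta d/k \rceil$ and using $\binom{ad}{\ell} \le (ead/\ell)^\ell$ together with $\ell \ge a\delta d/k$ gives
$$
\Pr\bigl[e(A,B) \ge \ell\bigr] \le \binom{ad}{\ell}\!\left(\frac{a(1+o(1))}{n}\right)^{\!\ell} \le \left(\frac{eak(1+o(1))}{\delta n}\right)^{\!\ell}.
$$

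Next, I will bound the number of ordered disjoint pairs $(A,B)$ of size $a$ by $\binom{n}{a}^2 \le (en/a)^{2a}$, so the expected number $E_a$ of ``bad'' pairs of size $a$ satisfies, after taking logarithms and substituting $u \coloneqq ak/n \in (0,\eta)$,
$$
\frac{\log E_a}{a} \le 2\log\!\left(\frac{ek}{u}\right) + \frac{\delta d}{k}\log\!\left(\frac{eu(1+o(1))}{\delta}\right).
$$
Because $d/k \ge \log d/(1-\epsilon)$, the coefficient $\frac{\delta d}{k} - 2$ of $\log u$ is positive for $d$ large, so this expression is increasing in $u$ on $(0,\eta)$ and it suffices to control its value at $u = \eta$. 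Choosing $\eta$ small enough in terms of $\epsilon$ and $\delta$ to force $\log(e\eta/\delta) \le -L$ for a large constant $L = L(\epsilon,\delta)$, and bounding $\log(ek/\eta) \le \log d + \log(e/\eta)$ via $k \le d$, the value at $u = \eta$ drops to at most $-\log d$ for all sufficiently large $d$. Hence $E_a \le d^{-a}$, so $\sum_{a \ge 1} E_a = o(1)$, and Markov's inequality gives the whp conclusion.

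The main obstacle is the competition between the counting factor $(en/a)^{2a}$, which is enormous for small $a$, and the probability decay $p^{a\delta d/k}$, whose base $p$ only becomes small when $\eta$ is small. The argument goes through precisely because the hypothesis $k \le (1-\epsilon)d/\log d$ upgrades $\delta d/k$ to $(1+\epsilon')\delta\log d$ for some $\epsilon' > 0$, so the probability factor contributes a genuine power of $d$ that dominates the $O(\log d)$-sized terms from counting, once $\eta = \eta(\epsilon,\delta)$ is chosen small enough. The small-$a$ regime is automatic from the monotonicity of the bound in $u$, so only the boundary $u = \eta$ requires care.
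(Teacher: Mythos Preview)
Your approach matches the paper's: apply Corollary~\ref{cor: gao} with $\ell = \lceil a\delta d/k\rceil$, union-bound over all pairs $(A,B)$ of a given size, and exploit monotonicity in the size parameter. Your substitution $u = ak/n$ is a clean repackaging of the paper's function $f(a) = 2\log(en/a) - \delta(d/k)\log(\delta n/(2eak))$, and your reduction to the boundary $u=\eta$ is exactly the paper's observation that $f$ is increasing.

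There is, however, a genuine gap in the last step. From the value at $u=\eta$ you deduce $E_a \le d^{-a}$ and then assert $\sum_{a\ge 1} E_a = o(1)$. But $\sum_{a\ge 1} d^{-a} = 1/(d-1)$, which is a positive constant in $n$; the paper fixes $d$ and sends $n\to\infty$, so ``\textbf{whp}'' demands failure probability $o_n(1)$, and $1/(d-1)$ does not qualify. Your monotonicity reduction discards precisely the information needed here: for small $a$ the actual value $u = ak/n$ is $o_n(1)$, and your bound $g(u) = C_1 + (\delta d/k - 2)\log u$ is then far more negative than $g(\eta)$, producing a genuine negative power of $n$ in $E_a$. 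The paper handles this by splitting at $a = n^{1-\xi}$: above the threshold, $a$ itself is large enough that $e^{a f(a)} \le e^{-a} \le e^{-n^{1-\xi}}$; below it, one evaluates at $u \le k n^{-\xi}$ rather than at $u=\eta$ and extracts a factor $n^{-c}$. In your notation, an equivalent patch is: for $a \ge (2/\log d)\log n$ your estimate $E_a \le d^{-a}$ already gives $E_a \le n^{-2}$, while for $a < (2/\log d)\log n$ you must substitute the true $u = ak/n$ (not $\eta$) into your formula to obtain $E_a \le n^{-c'}$ for some $c'>0$.
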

\begin{proof}
    We have at most $n$ choices for the sizes of the sets $A$ and $B$. For a fixed size $a$, we have at most $\binom{n}{a}^2$ choices for the sets of such size. By the union bound over the (at most) $n$ values of $a$ and the number of choices of the sets, it suffices to prove that, for every fixed $A$ and $B$ satisfying $|A| = |B|\eqqcolon a < \eta \cdot \frac{n}{k}$, we have
    \[
        \Pr\left(e(A, B) \ge a \cdot \delta \cdot \frac{d}{k}\right) = o\left(\frac{1}{n \binom{n}{a}^2}\right)\quad\text{ uniformly in $a$.}
    \]   

    Fix $a < \eta \cdot \frac{n}{k}$ and set $\ell = a \cdot \delta \cdot \frac{d}{k}$. Note that $2\ell=\frac{2a\delta d}{k}<\frac{2\eta \delta n d}{k^2}<\delta nd$, and thus $M-2\ell\ge \frac{nd}{3}=\omega(d^2).$
    Fix two disjoint sets $A$ and $B$ of size $a$. Note that $d(A) = a d\le \eta \cdot \frac{nd}{k}<nd/2=M/2$ where $M \coloneqq \sum_{v} d(v)$ (and the second inequality holds for $\eta>0$ sufficiently small). Thus, we may apply Corollary \ref{cor: gao} and obtain,
    \begin{align*}
        \Pr(e(A, B) \ge \ell) &\le \binom{d(A)}{\ell} \left(\frac{d(B)}{M(1 + o(1))}\right)^\ell \le \left(\frac{e a d}{\ell}\right)^\ell \left(\frac{ad}{nd(1+o(1))}\right)^\ell = \left(\frac{e a^2 d}{\ell n(1+o(1))}\right)^\ell. 
    \end{align*}
    Thus,
    \begin{align*}
        \Pr(e(A, B) \ge \ell) \cdot n \binom{n}{a}^2 &\le \left(\frac{e a^2 d}{\ell n (1+o(1))}\right)^\ell \cdot n \binom{n}{a}^2 \le \left(\frac{e a^2 d}{\ell n(1+o(1))}\right)^\ell \cdot n \left(\frac{en}{a}\right)^{2a} \\
        &\le \left(\frac{2 e a k}{\delta  n}\right)^{a \cdot \delta \cdot \frac{d}{k}} \cdot n \left(\frac{en}{a}\right)^{2a} \\        
        &= \exp\left\{\log n + a\left[2 \log\left(\frac{en}{a}\right) - \delta \cdot \frac{d}{k} \cdot \log\left(\frac{\delta  n}{2eak}\right)\right]\right\}.
    \end{align*}
    Observe that the function $f(a) = 2 \log\left(\frac{en}{a}\right) - \delta \cdot \frac{d}{k} \cdot \log\left(\frac{\delta  n}{2eak}\right)$ is increasing for all $a > 0$ whenever $d$ is large enough. Indeed,
    \begin{align*}
        f'(a) = -\frac{2}{a} + \frac{\delta d}{a k} \ge \frac{1}{a}\left(\frac{\delta \cdot \log d}{(1-\epsilon)} - 2\right) > 0,        
    \end{align*}
    where the first inequality is true since $k \le \frac{(1-\epsilon)d}{\log d}$. Moreover, we have that 
    \begin{align*}
        f\left(\eta \cdot \frac{n}{k}\right) &= 2 \log \left(\frac{e k}{\eta}\right) - \delta \cdot \frac{d}{k} \cdot \log\left(\frac{\delta}{2 e \eta}\right) \\
        &\le 2\log(d) - \frac{\delta}{1-\epsilon} \cdot \log d \cdot \log\left(\frac{\delta}{2 e \eta}\right) < -1,
    \end{align*}        
    where the last inequality is true whenever $\eta$ is sufficiently smaller than $\delta$. Thus, if $a \le n^{1-\xi}$ for some small constant $\xi > 0$, then
    \begin{align*}
        \Pr(e(A, B) \ge \ell) \cdot n \binom{n}{a}^2 &\le \exp\left\{\log n + a \cdot f\left(n^{1-\xi}\right)\right\} \le  \exp\left\{\log n + f\left(n^{1-\xi}\right)\right\}\\
        &= \exp\left\{\log n + \left[2 \log (e n^\xi) - \delta \cdot \frac{d}{k} \cdot \log\left(n^\xi \cdot \frac{\delta}{2ek}\right)\right]\right\} \\
        &\le \exp\left\{\log n + 3 \xi \log n - 0.5 \xi \delta \cdot \frac{d}{k} \cdot \log n\right\} = o(1),        
    \end{align*}
    where the last equality is true for large enough $d$ since $\frac{d}{k} \ge \log d$.
    
    Further, if $n^{1 - \xi} \le a \le \eta \cdot \frac{n}{k}$, then
    \begin{align*}
        \Pr(e(A, B) \ge \ell) \cdot n \binom{n}{a}^2 &\le e^{\log n + a \cdot f(a)}
        \le e^{\log n - a} 
        \le e^{\log n - n^{1-\xi}}  = o(1).
    \end{align*}        
\end{proof}

Claim \ref{claim:global-event-between-small-sets} is useful in showing Hall's condition between small sets in $V_i$ and $V_j$ (that is, every small set $U \subseteq V_i$ has many neighbours in $V_j$). We would like to bound the neighbourhoods of large sets as well. This is the essence of the next claim.

\begin{claim}\label{claim:typical neighbourhood-S}      
    For every $\epsilon, \eta > 0$, there exist $\epsilon_1, \epsilon_2 > 0$ such that, for sufficiently large $d$, the following holds in $G \sim G(n, d)$. Let $2\le k \le \frac{(1-\epsilon)d}{\log d}$ be an integer and let $S_1,\ldots, S_k$ be a uniformly random partition of $V(G)$. Then, \textbf{whp}, for every $i \neq j \in \br{k}$ and $A \subset S_i$ satisfying  $\eta \frac{n}{k} \le |A| \le 0.5(1 + \epsilon_1)\frac{n}{k}$, we have
    $|N(A, S_j)| \ge (1 + \epsilon_2) |A|$.    
\end{claim}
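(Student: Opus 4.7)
Plan. The bad event is: there exist $i \neq j$ and $A \subseteq S_i$ with $a := |A| \in [\eta n/k,\ 0.5(1+\epsilon_1) n/k]$ and $|N(A, S_j)| < (1+\epsilon_2)a$. I would reformulate it by letting $B := N(A, S_j)$ and $C := S_j \setminus B$: then $A \cap C = \emptyset$, $A \subseteq S_i$, $C \subseteq S_j$, and $e_G(A, C) = 0$. Since $|S_j| \sim \mathrm{Bin}(n, 1/k)$ concentrates around $n/k$ by Lemma~\ref{lemma:binomial-bounds}, whp $|S_j| \ge (1-\delta_0) n/k$, and choosing $\epsilon_1, \epsilon_2, \delta_0$ small enough gives $|C| \ge |S_j| - (1+\epsilon_2) a \ge \gamma_0 n/k$ for a fixed $\gamma_0 \in (0.4, 0.5)$ depending only on these parameters.

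Because the random graph $G$ and the random partition are independent, for fixed disjoint $A, C \subseteq V(G)$,
\[
\Pr\bigl[A \subseteq S_i,\ C \subseteq S_j,\ e_G(A, C) = 0\bigr] \;=\; k^{-(|A|+|C|)} \cdot \Pr_G[e_G(A, C) = 0].
\]
The analytic heart of the argument is the exponential bound
\[
\Pr_G[e_G(A, C) = 0] \;\le\; \exp\!\bigl(-c_0\, d|A||C|/n\bigr)
\]
for an absolute constant $c_0 > 0$, valid whenever $d|A||C|/n \to \infty$. Writing $a = \alpha n/k$ with $\alpha \in [\eta, 0.5(1+\epsilon_1)]$ and using $\binom{n}{m} \le (en/m)^m$, each term in the union bound is at most
\[
\exp\!\Bigl(\tfrac{n}{k}\Bigl[\alpha \log(e/\alpha) + \gamma_0 \log(e/\gamma_0) - c_0\,\alpha\gamma_0 \tfrac{d}{k}\Bigr]\Bigr).
\]
The hypothesis $k \le (1-\epsilon) d/\log d$ forces $d/k \ge \log d/(1-\epsilon)$, so for $d$ large enough (depending on $\eta, \epsilon_1, \gamma_0, c_0$) the negative term dominates the bounded positive quantity, making each summand $\exp(-\Omega(n/k))$. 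A final union bound over the $O(n)$ choices of $a$ and $\le k^2 \le d^2$ pairs $(i,j)$ yields $o(1)$.

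The main obstacle is establishing the bound $\Pr_G[e_G(A, C) = 0] \le \exp(-c_0 d|A||C|/n)$: the tools in the excerpt (Theorem~\ref{th:gao} and Corollary~\ref{cor: gao}) provide upper-tail bounds on $e(A, C)$ and do not directly treat the event $\{e = 0\}$. I would supply a short switching argument in the pairing model: from a pairing $\pi$ with $e_G(A, C) = k$, swapping an $A$-outside edge of $\pi$ with a $C$-outside edge of $\pi$ produces a pairing with $e_G(A, C) = k+1$; counting forward and reverse switchings yields the Poisson-type ratio $\Pr[e = k]/\Pr[e = k+1] = \Theta((k+1) n/(d|A||C|))$, which iterates to $\Pr_G[e_G(A, C) = 0] \le e^{-(1-o(1))\, d|A||C|/n}$ in the pairing model, and the bound transfers to $G(n, d)$ via the standard contiguity with the simple pairing model for fixed $d$.
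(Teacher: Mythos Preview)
Your reformulation is correct and your outline does go through, but it takes a genuinely different route from the paper.

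\textbf{What the paper does.} The paper first proves that \textbf{whp} almost every vertex of $S_i$ has degree $(1\pm\xi)d/k$ into $S_j$ (a Chebyshev argument over the partition randomness). It then argues by contradiction: if $|N(A,S_j)|<(1+\epsilon_2)|A|$, take $B\supseteq N(A,S_j)$ with $|B|=(1+\epsilon_2)|A|$, so that $e(A,S_j)=e(A,B)$. The degree concentration forces $e(A,S_j)\ge(1-2\xi)|A|d/k$, while Lemma~\ref{lemma:Gao-usage} (an application of Theorem~\ref{th:gao} to the random graph induced on $S_i\cup S_j$, which is uniform given its degree sequence) gives $e(A,B)<(1-2\xi)|A|d/k$ with the required probability. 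Thus the paper works with an \emph{upper-tail} bound on $e(A,B)$ for $B$ just slightly larger than $A$.

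\textbf{What you do.} You look instead at $C=S_j\setminus N(A,S_j)$, which is large ($|C|\ge\gamma_0 n/k$), and reduce to a \emph{zero-probability} bound $\Pr_G[e(A,C)=0]\le\exp(-c_0 d|A||C|/n)$. You then combine this with the partition probability $k^{-(|A|+|C|)}$ and a union bound over all $A,C\subseteq V(G)$. This is more direct: it avoids the preliminary degree-concentration step and does not invoke Gao--Ohapkin at all. On the other hand, the paper can rely entirely on machinery already developed in Section~\ref{s: prelim}, whereas your approach imports an additional (if standard) switching lemma that is outside the paper's toolkit.

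\textbf{A caveat.} Your switching sketch is essentially right in spirit, but as written it glosses over the case analysis needed to ensure each forward switch increases $e(A,C)$ by exactly one (the new edge $\{r,s\}$ must not itself be an $A$--$C$ edge, which requires excluding the case $r\in A$-points and $s\in C$-points). These corrections only affect lower-order terms and the bound $\Pr[e(A,C)=0]\le e^{-(1-o(1))d|A||C|/n}$ survives, but a careful write-up should handle them. The contiguity transfer to the simple model is unproblematic since $d$ is fixed.
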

\begin{proof}
    Let $\xi > 0$ be a sufficiently small constant. We first show that \textbf{whp}, for every $i, j \in \br{k}$ (not necessarily different), we have that $d(v,S_j)\in[(1-\xi)d/k,(1+\xi)d/k]$ for all but at most $o_d(n/k)$ of $v\in S_i$. Fix $i,j\in \br{k}$. For every vertex $v \in V(G)$, denote by $Z_v$ the indicator random variable of the event that the number of neighbours of $v$ in $S_j$ is not in the interval $[(1-\xi) d/k, (1+\xi) d/k]$. Set $Z \coloneqq \sum_{v \in V(G)} Z_v \cdot \mathbf{1}_{v \in S_i}$. By Lemma \ref{lemma:binomial-bounds}, for every vertex $v$, 
    \begin{align*}
        \Pr\left(Z_v = 1\right) &= \Pr\left(\left|\text{Bin}\left(d, \frac{1}{k}\right) - \frac{d}{k}\right| > \xi \frac{d}{k}\right)\le 2\cdot e^{-\frac{\xi^2d}{3k}}\le  d^{-\xi^2/4},
    \end{align*}
    where the last inequality follows since $\frac{d}{k} \ge \frac{\log d}{1-\epsilon}$.  
    
    Further, the events that $v \in S_i$ and $Z_v = 1$ are independent. Hence, we have \begin{equation}
        \mathbb{E}[Z] \le \frac{n}{k\cdot d^{\xi^2/4}}.
    \label{eq:E_Z}    
    \end{equation}
    For every vertex $v \in V(G)$, the random variable $Z_v \cdot \mathbf{1}_{v \in S_i}$ is independent of all but at most $d^2$ other random variables $Z_u \cdot \mathbf{1}_{u \in S_i}$. Therefore,
    \begin{align*}
        \text{Var}(Z) &= \sum_{v, u \in V(G)} \text{Cov}(Z_v \cdot \mathbf{1}_{v \in S_i}, Z_u \cdot \mathbf{1}_{u \in S_i}) \\
        &\le \sum_{v \in V(G)} d^2 \cdot \max_{u \in V(G)} \Pr\left(Z_v \cdot \mathbf{1}_{v \in S_i} = Z_u \cdot \mathbf{1}_{u \in S_i} = 1\right) \le d^2 n.
    \end{align*}
    By Chebyshev's inequality,
    \[
        \Pr\left(|Z - \mathbb{E}[Z]| > \frac{n}{d^{\xi^2/4} k}\right) = o_n(1).
    \]
    Therefore, by the union bound and due to~\eqref{eq:E_Z}, \textbf{whp} for every $i,j\in\br{k}$, we have that all but at most $o_d(n/k)$ vertices $v\in S_i$ satisfy $d(v,S_j)\in \left[(1-\xi)d/k,(1+\xi)d/k\right]$. 

    Let $C > 0$ be a large constant. Similarly to the previous argument, we can show that \textbf{whp} for every $i, j\in\br{k}$, in every $S_i$ there are $o(n/d^{100})$ vertices of degree larger than $C \cdot d/k$ into $S_j$. In addition, by a straightforward application of Lemma \ref{lemma:binomial-bounds} we have that \textbf{whp} $|S_i| = \frac{n}{k} + O(n^{0.51})$ for every $i \in \br{k}$.

    Assume that the above holds deterministically. That is, for every $i, j \in \br{k}$, we have $d(v,S_j)\in \left[(1-\xi)d/k,(1+\xi)d/k\right]$ for all but at most $o_d(n/k)$ vertices $v \in S_i$ and that $d(v, S_j) \le C \cdot d/k$ for all but at most $o(n/d^{100})$ vertices $v \in S_i$.
    
    Note that, for every $i, j \in \br{k}$ and any set $A \subseteq S_i$ of size at least $\eta n/k$, we have 
    \begin{align}\label{eq: A S_j edges}
        e(A, S_j) \ge (|A| - o_d(n/k)) \cdot (1-\xi) \frac{d}{k} > |A| \cdot (1-2\xi) \frac{d}{k},
    \end{align}            
    where the last inequality is true for sufficiently large $d$. Notice that $e(A, S_j)=e(A, N(A,S_j))$. Thus, if we show that for every choice of $B$ of size $(1+\epsilon_2) |A|$, we have that $e(A, B) \le |A| \cdot (1-2\xi) \frac{d}{k}$, then it implies that $|N(A, S_j)| \ge (1 + \epsilon_2) |A|$. Indeed, if $|N(A, S_j)| < (1 + \epsilon_2) |A|$, then we may take $N(A, S_j) \subseteq B \subseteq S_j$ to be of size $(1+\epsilon_2) |A|$ and get $e(A, S_j) = e(A, B) \le |A| \cdot (1-2\xi) \frac{d}{k}$, a contradiction to \eqref{eq: A S_j edges}.

    We now show that \textbf{whp} $e(A, B) \le |A| \cdot (1-2\xi) \frac{d}{k}$ for every $i \neq j \in \br{k}$, $A \subseteq S_i$ satisfying $\eta \frac{n}{k} \le |A| \le 0.5(1 + \epsilon_1)\frac{n}{k}$ and $B \subseteq S_j$ of size $(1 + \epsilon_2) |A|$.
    
    Fix $i \neq j \in \br{k}$, fix $\eta \frac{n}{k} \le a \le 0.5(1 + \epsilon_1)\frac{n}{k}$, fix $A \subset S_i$ of size $a$ and fix $B \subseteq S_2 $ of size $(1 + \epsilon_2)a$. We now prepare the ground for the usage of Lemma \ref{lemma:Gao-usage}.
    
    Consider the graph $\Tilde{G}$ induced by $S_i \cup S_j$. Note that $\Tilde{G}$, given its degree sequence, has a uniform distribution. Moreover, all but at most $o_d(n/k)$ vertices from $S_i \cup S_j$ have degree at least $(1-\xi) \frac{d}{k}$ to $S_i \cup S_j$. We also have $|S_i \cup S_j| = \frac{2n}{k} + O(n^{0.51})$. Hence,
    \begin{align*}
        \sum_{v \in S_i \cup S_j} d_{\Tilde{G}}(v) \ge \left(|S_i \cup S_j| -o_d(n/k)\right) \cdot 2(1-\xi) \frac{d}{k} \ge 4(1-2\xi) \cdot \frac{n}{k} \cdot \frac{d}{k} \ge 8(1-3\xi) \cdot a \cdot \frac{d}{k},
    \end{align*}
    where the last inequality is true since $a \le 0.5(1+\epsilon_1) \frac{n}{k}$ and whenever $\epsilon_1$ is sufficiently smaller than $\xi$.

    Furthermore, since all but at most $o_d(n/k)$ vertices in $S_i \cup S_j$ have degree at most $2(1+\xi) \frac{d}{k}$ into $S_i \cup S_j$ and all but at most $n/d^{100}$ vertices in $S_i \cup S_j$ have degree at most $C\cdot \frac{d}{k}$ into $S_i \cup S_j$, we have
    \begin{align*}
        d_{\Tilde{G}}(A) &\le 2(1+\xi) \cdot \frac{d}{k} \cdot |A| + 2C \cdot \frac{d}{k} \cdot  o_d\left(\frac{n}{k}\right) + 2d \cdot \frac{n}{d^{100}}
        \\& = 2(1+\xi) \cdot a \cdot \frac{d}{k} + 2C \cdot \frac{d}{k} \cdot  o_d\left(\frac{n}{k}\right) + 2d \cdot \frac{n}{d^{100}} \le 2(1+2\xi) \cdot a \cdot \frac{d}{k}.
    \end{align*}
    Similarly, we have
    \begin{align*}
        d_{\Tilde{G}}(B) &\le 2(1+\xi) \cdot \frac{d}{k} \cdot |B| + 2C \cdot \frac{d}{k} \cdot  o_d\left(\frac{n}{k}\right) + 2d \cdot \frac{n}{d^{100}}
        \\&\le  2(1+\xi) \cdot (1+\epsilon_2) a \cdot \frac{d}{k} + 2C \cdot \frac{d}{k} \cdot  o_d\left(\frac{n}{k}\right) + 2d \cdot \frac{n}{d^{100}} \le 2(1+2\xi) \cdot a \cdot \frac{d}{k},
    \end{align*}
    where the last inequality is true whenever $\epsilon_2$ is sufficiently smaller than $\xi$.

    Hence, by Lemma \ref{lemma:Gao-usage} (applied with $t = a \cdot \frac{d}{k}$),
    \begin{align*}
        \Pr\left(e(A, B) \ge (1-2\xi) a \cdot \frac{d}{k}\right) \le 0.95^{a \cdot \frac{d}{k}}\le0.95^{a\log d},
    \end{align*}
    since $k\le \frac{(1-\epsilon)d}{\log d}$. Hence, by the union bound, the probability that there exist such sets $A$ and $B$ such that $e(A, B) > |A| \cdot (1-2\xi) \frac{d}{k}$ is at most
    \begin{align*}
        d^2 \cdot \sum_{a = \eta \frac{n}{k}}^{(0.5 + \epsilon_1)\frac{n}{k}} \binom{n/k + O(n^{0.51})}{a}\binom{n/k + O(n^{0.51})}{(1+\epsilon_2)a} 0.95^{a \log d} &\le \sum_{a = \eta \frac{n}{k}}^{(0.5 + \epsilon_1)\frac{n}{k}} e^{3a \log(10n/(ak))} 0.95^{a \log d} \\
        &\le \sum_{a = \eta \frac{n}{k}}^{(0.5 + \epsilon_1)\frac{n}{k}} e^{3a \log (10/\eta)} 0.95^{a \log d} \\
        &\le n e^{-a} = o_n(1),
    \end{align*}
    where the last inequality is true if $d$ is sufficiently large.
\end{proof}

\subsection{Proof of Theorem \ref{th: main}}\label{subsection:proof-of-main-thm}
Let $\epsilon > 0$ be a constant and let $d$ be a sufficiently large integer. Let $T$ be a tree on $k\le \frac{(1-\epsilon)d}{\log d}$ vertices. We will show that \textbf{whp} $G \sim G(n, d)$ contains a $T$-factor. Let $\delta = \delta(\epsilon) > 0$ and $C = C(\epsilon) > 0$ be the constants guaranteed by Proposition \ref{prop: main path}. In addition, let $\eta = \eta(\epsilon, \delta)$ be the constant guaranteed by Claim \ref{claim:global-event-between-small-sets} and let $\epsilon_1 = \epsilon_1(\eta^2,\epsilon)$ and $\epsilon_2 = \epsilon_2(\eta^2,\epsilon)$, be the constants guaranteed by Claim \ref{claim:typical neighbourhood-S}.

Now, let $S_1,\ldots, S_k$ be such that every $v\in V(G)$ is assigned to $S_i$ for an index $i\in \br{k}$ chosen uniformly at random, independently from all the other vertices. Note that \textbf{whp} $G(n,d)\in \mathcal{G}_d$ (see, for example, \cite{W99}) and the statements of Claims \ref{claim:global-event-between-small-sets} and \ref{claim:typical neighbourhood-S}, are satisfied. We then fix a deterministic $G\in \mathcal{G}_d$ that satisfies conclusions of both claims.

Let $\Sigma$ be the set of all partitions of $V(G)$ into $k$ ordered sets. Let $\Sigma'\subset\Sigma$ be the set of all \emph{nice} $(S_1,\ldots,S_k)$, i.e. those that satisfy the conclusion of Proposition~\ref{prop: main path}. Due to Proposition~\ref{prop: main path}, there exists a constant $\gamma>0$ such that $|\Sigma'|/|\Sigma|\geq\gamma$. On the other hand, let $\Sigma''\subset\Sigma$ be the set of all good $(S_1,\ldots,S_k)$, i.e. those that satisfy the conclusion of Claim~\ref{claim:typical neighbourhood-S}. We know that $|\Sigma''|/|\Sigma|=1-o(1)$. We immediately get that there exists a tuple $(S_1,\ldots,S_k)$ which is simultaneously nice and good. Since this tuple is nice,
 there exist sets $V_1, \dots, V_k$ which satisfy all the desired requirements. Under these assumptions, we will be able to show deterministically that there exists a perfect matching between $V_i$ and $V_j$ for every $\{i,j\} \in E(T)$ which implies the existence of a $T$-factor. One way to show the latter implication is, for example, by induction on $k$. Assume without loss of generality that $k \in V(T)$ is a leaf and that, by induction assumption, we have a $T'$-factor in $\cup_{i=1}^{k-1} V_i$ where $T' = T \setminus \{k\}$. We may then complete $T'$ to a $T$-factor via the perfect matching between $V_k$ and $V_i$ where $i$ is the only neighbour of $k$ in $T$.

Fix $\{i,j\} \in E(T)$. We will show that Hall's condition is satisfied between $V_i$ and $V_j$ in $G$. Let $W \subseteq V_i$. We will prove that $|N(W, V_j)| \ge |W|$. By Proposition \ref{prop: main path}, for every $v \in V_i$, we have $d(v, V_j) \in \left[\delta \cdot \frac{d}{k}, C \cdot \frac{d}{k}\right]$. Hence, 
\begin{align}\label{eq:W-A_j-edges}
    e(W, V_j) \ge |W| \cdot \delta \cdot \frac{d}{k}.
\end{align}
We split the proof into three parts depending on the size of $|W|$.

First of all, we show that if $|W| < \eta \cdot \frac{n}{k}$, then $|N(W, V_j)| > |W|$. Assume towards contradiction that this is false. Then, there exists a set $B \subseteq V_j$ satisfying $N(W, V_j) \subseteq B$ and $|B| = |W|$. By Claim \ref{claim:global-event-between-small-sets}, we have $e(W, B) = e(W, V_j) < |W| \cdot \delta \cdot \frac{d}{k}$, a contradiction to \eqref{eq:W-A_j-edges}.

Next, assume that $\eta \cdot \frac{n}{k} \le |W| \le 0.5(1 + \epsilon_1) \frac{n}{k}$. We have
\begin{align*}        
    |W \cap S_i| \ge |W| - |V_i \setminus S_i| \ge \eta \cdot \frac{n}{k} - |V_i \setminus S_i| \ge \eta^2 \cdot \frac{n}{k},
\end{align*}
where the last inequality is true since $|V_i \setminus S_i| = o_d(n/k)$ by Proposition \ref{prop: main path}. Thus,
\begin{align*}
    |N_{V_j}(W)| &\ge |N_{S_j}(W \cap S_i) \cap V_j| \ge (1 + \epsilon_2) |W \cap S_i| - |V_j \setminus S_j| \\
    &\ge (1 + \epsilon_2)(|W| - |V_i \setminus S_i|) - |V_j \setminus S_j| > |W|,
\end{align*}
where the second inequality is true by Claim \ref{claim:typical neighbourhood-S} and the last inequality is true since $|V_i \setminus S_i|, |V_j \setminus S_j| = o_d(n/k)$ and $|W| = \Omega(n/k)$.

Finally, assume that $0.5(1 + \epsilon_1) \frac{n}{k} < |W| \le \frac{n}{k}$. Assume towards contradiction that $|N(W, V_j)| < |W|$. Let $B \subseteq V_j \setminus N(W, V_j)$ be an arbitrary set of size $|V_i \setminus W|$. Notice that $N(B, V_i) \subseteq V_i \setminus W$, otherwise there exists $v \in B$ which is adjacent to $u \in W$. This in turn implies that $v \in N(W, V_j)$ and, in particular, $v \notin B$ --- a contradiction. Moreover, 
\[
    |B| = |V_i| - |W| < |V_i| - 0.5(1 + \epsilon_1) \frac{n}{k} = 0.5(1-\epsilon_1) \frac{n}{k}.
\]
Therefore, by the previous argument (with $i$ and $j$ reversed), $|N(B, V_i)| > |B|$. However, since $N(B, V_i) \subseteq V_i \setminus W$, we have $|N(B, V_i)| \le |V_i \setminus W| = |B|$ --- contradiction.


\paragraph{Acknowledgements} The authors thank Itai Benjamini for bringing the question to our attention. The authors wish to thank Michael Krivelevich and Noga Alon for fruitful discussions. In particular, we thank Michael Krivelevich for showing us the alternative argument from~\cite{DK}, after the first version of this paper was uploaded. It allowed us to improve the presentation significantly.

\bibliographystyle{abbrv}
\bibliography{sat}

\appendix
\section[d-1-star-factor]{$K_{1,d-1}$-factor}
\label{appendix}
Let us show that, for $d\ge 5$, $G(n,d)$ \textbf{whp} does not contain a $K_{1,d-1}$-factor. Setting $N=nd$, $M=\frac{n}{d}+\frac{n(d-1)^2}{d}=N-2n+2n/d$, we have that the expected number of graphs that correspond to $K_{1,d-1}$-factors in the configuration model (see, for example, ~\cite{Bol_book}) is at most
\begin{align*}
    \frac{\binom{n}{n/d}d^{n/d}(n\frac{d-1}{d})!d^{n(d-1)/d}M!/(2^{M/2}(M/2)!}{N!/(2^{N/2}(N/2)!)}&=\left(\sqrt{d}+o(1)\right)\frac{n^nd^n(M/N)^{M/2}e^{n-n/d}}{(n/d)^{n/d}e^{n(d-1)/d}(nd)^{n-n/d}}\\
    &=\left(\sqrt{d}+o(1)\right)\left(d^{2/d}\left(1-\frac{2}{d}+\frac{2}{d^2}\right)^{d/2-1+1/d}\right)^n\\
    &=o(1).
\end{align*}
Indeed, $g(d)=d^{2/d}\left(1-\frac{2}{d}+\frac{2}{d^2}\right)^{\frac{d}{2}-1+\frac{1}{d}}$ decreases in $d$ on $[5,\infty)$, and $g(5)<1$.
\end{document}